\documentclass{article}
\usepackage{delarray}
\usepackage{bigdelim}
\usepackage{amsmath,amssymb,graphicx,enumerate,latexsym,tabularx,xcolor,theorem}
\usepackage{algorithm}
\usepackage{algpseudocode}
\usepackage{arxiv}
\usepackage{comment}
\usepackage[utf8]{inputenc} % allow utf-8 input
\usepackage[T1]{fontenc}    % use 8-bit T1 fonts
\usepackage{hyperref}       % hyperlinks
\usepackage{url}            % simple URL typesetting
\usepackage{booktabs}       % professional-quality tables
\usepackage{amsmath}
\usepackage{amssymb}
\usepackage{amsfonts}       % blackboard math symbols
\usepackage{nicefrac}       % compact symbols for 1/2, etc.
\usepackage{microtype}      % microtypography
\usepackage{xcolor}
\usepackage{graphicx}
\usepackage{stmaryrd}
\SetSymbolFont{stmry}{bold}{U}{stmry}{m}{n} % To suppress a warning, see https://tex.stackexchange.com/questions/106714/stmaryrd-and-boldsymbol-avoid-warnings
\usepackage{bm}
\usepackage[normalem]{ulem}
\usepackage{cancel}
\usepackage[tickmarkheight=0.1cm,textsize=tiny]{todonotes}
\usepackage{algorithmicx}
\usepackage{algorithm}
\usepackage{algpseudocode}
\usepackage{caption}
\usepackage{subcaption}
\usepackage{lineno}
\usepackage{orcidlink}
\usepackage[inline,final]{showlabels}
\usepackage[tickmarkheight=0.1cm,textsize=tiny]{todonotes}

% Remove the initial indentation from algorithmic package
\usepackage{xpatch}
\makeatletter
\xpatchcmd{\algorithmic}
{\ALG@tlm\z@}{\leftmargin\z@\ALG@tlm\z@}
{}{}
\makeatother

\hypersetup{
colorlinks=true, % make the links colored
linkcolor=blue, % color TOC links in blue
urlcolor=red, % color URLs in red
citecolor=magenta,
linktoc=all % 'all' will create links for everything in the TOC
}
%%%%%%%%%% Start TeXmacs macros
\newcommand{\assign}{:=}
\newcommand{\cdummy}{\cdot}
\newcommand{\mathLaplace}{\Delta}
\newcommand{\mathd}{\mathrm{d}}
\newcommand{\nocomma}{}

\newcommand{\tmmathbf}[1]{\ensuremath{\boldsymbol{#1}}}
\newcommand{\tmop}[1]{\ensuremath{\operatorname{#1}}}

\newcommand{\tmrsup}[1]{\textsuperscript{#1}}
\newcommand{\tmsamp}[1]{\textsf{#1}}
\newcommand{\tmtextbf}[1]{\text{{\bfseries{#1}}}}
\newcommand{\tmtextit}[1]{\text{{\itshape{#1}}}}
\newcommand{\tmtextrm}[1]{\text{{\rmfamily{#1}}}}
\newcommand{\tmtexttt}[1]{\text{{\ttfamily{#1}}}}
\newcommand{\tmverbatim}[1]{\text{{\ttfamily{#1}}}}
\newtheorem{definition}{Definition}
\usepackage[T3,T1]{fontenc}
\DeclareSymbolFont{tipa}{T3}{cmr}{m}{n}
\DeclareMathAccent{\invbreve}{\mathalpha}{tipa}{16}
{\theorembodyfont{\rmfamily}\newtheorem{remark}{Remark}}
%%%%%%%%%% End TeXmacs macros

%

\newcommand{\avg}[1]{\overline{#1}}

\newcommand{\myvector}[1]{\mathsf{#1}}
%  \# microsoft san seriff font

% \# Vector symbols

\newcommand{\vu}{\myvector{u}}
\newcommand{\vA}{\myvector{A}}
\newcommand{\vAone}{\myvector{A}^{(1)}}
\newcommand{\vD}{\myvector{D}}
\newcommand{\vH}{\myvector{H}}
\newcommand{\vT}{\myvector{T}}
\newcommand{\vTone}{\myvector{T}^{(1)}}
\newcommand{\vI}{\myvector{I}}
\newcommand{\vU}{\myvector{U}}
\newcommand{\vUone}{\myvector{\uUone}}

\newcommand{\vf}{\myvector{f}}

\newcommand{\vs}{{\myvector{s}}}
\newcommand{\vF}{\myvector{F}}
\newcommand{\vFone}{\myvector{\Fone}}
\newcommand{\vFtwo}{\myvector{\Ftwo}}
\newcommand{\vSone}{\myvector{\Sone}}
\newcommand{\vStwo}{\myvector{\Stwo}}

\newcommand{\vb}{\myvector{b}}
\newcommand{\vR}{\myvector{R}}

\newcommand{\vV}{\myvector{V}}

\newcommand{\extrapolate}{\text{\tmtextbf{AE}}}
\newcommand{\evaluate}{\tmtextbf{EA}}
%

% \#Partial Derivative symbols

\newcommand{\pdv}[2]{\frac{\partial #1}{\partial #2}}

% \#Ordinary derivative symbol

\newcommand{\dv}[2]{\frac{\mathd #1}{\mathd #2}}

% \#Bold symbols - Start

\newcommand{\bzero}{\tmmathbf{0}}

\newcommand{\bu}{\tmmathbf{u}}

\newcommand{\bx}{\ensuremath{\tmmathbf{x}}}

% \#Bold symbols-end

%

% \#Hat appendex

% \#Sobolev spaces

\newcommand{\poly}{\mathbb{P}}

\newcommand{\emh}{{e - \frac{1}{2}}}

\newcommand{\eph}{{e + \frac{1}{2}}}

\newcommand{\nph}{n + \frac{1}{2}}
\newcommand{\Nmh}{N - \frac{1}{2}}
\newcommand{\Nph}{N + \frac{1}{2}}
\newcommand{\half}{\frac{1}{2}}
\newcommand{\ud}{\text{\tmtextrm{d}}}
\newcommand{\pd}[2]{\frac{\partial #1}{\partial #2}}
\newcommand{\od}[2]{\frac{\ud #1}{\ud #2}}

% \# All the mu +-

% \# All the theta {\pm} symbols

% \# All the e, e {\pm} symbols

% {\nocomma}

%

\newcommand{\Uad}{\ensuremath{\mathcal{U}}_{\textrm{\tmop{ad}}}}
% \# Others

%

\newcommand{\uu}{\tmmathbf{u}}

\newcommand{\bw}{\tmmathbf{u}}
\newcommand{\re}{\mathbb{R}}

% \# Blending paper u pm, etc.

\newcommand{\utilow}{\invbreve{\boldsymbol{u}}^{\text{low},n+1}}
\newcommand{\au}{\avg{\uu}}
\newcommand{\uU}{\tmmathbf{U}}

\newcommand{\pf}{\tmmathbf{f}}

\newcommand{\ff}{\pf}

\newcommand{\F}{\tmmathbf{F}}

\newcommand{\bss}{\boldsymbol{s}}

%

% \# uep, Fep, xep symbols

%

\newcommand{\uep}{\tmmathbf{u}_{e, p}}

\newcommand{\uez}{\tmmathbf{u}_{e, 0}}

\newcommand{\uepoz}{\tmmathbf{u}_{e + 1, 0}}
\newcommand{\ueN}{\tmmathbf{u}_{e, N}}
\newcommand{\xep}{x^e_p}

%

% \# p {\pm} half

\newcommand{\pph}{p + \frac{1}{2}}
\newcommand{\pmh}{p - \frac{1}{2}}

%

% \# Formatting

\newcommand{\paragraphtoc}[1]{}
%

% \# Find these homes!

\newcommand{\ad}{p}
%  \# Admissibility constraint symbol

%

%

%

\newcommand{\uus}{\tmmathbf{u}^{\ast}}
\newcommand{\Fone}{\tmmathbf{F}}
\newcommand{\uUone}{\uU}

\newcommand{\uUtwom}{\uU^{\ast-}}
\newcommand{\uUtwop}{\uU^{\ast+}}
\newcommand{\Ftwo}{\tmmathbf{F}^{\ast}}
\newcommand{\Ftwop}{\tmmathbf{F}^{\ast+}}
\newcommand{\Ftwom}{\tmmathbf{F}^{\ast-}}

\newcommand{\Ftwod}{\tmmathbf{F}^{\ast\delta}}
\newcommand{\FtwoHO}{\tmmathbf{F}^{\text{HO}\ast}}
\newcommand{\Sone}{\mathbf{S}}
\newcommand{\Stwo}{\mathbf{S}^*}

\newcommand{\bL}{\tmmathbf{L}}

\newcommand{\vfs}{\myvector{f}^*}
\newcommand{\vfsone}{\myvector{f}^{*(1)}}
\newcommand{\vus}{\myvector{u}^*}
\newcommand{\vusone}{\myvector{u}^{*(1)}}

% \# From Multi\_derivative.tm

\newcommand{\cH}{\tmverbatim{H}}
\newcommand{\cV}{\tmverbatim{V}}

\newtheorem{theorem}{Theorem}
\newenvironment{proof}{\noindent\textbf{Proof\ }}{\hspace*{\fill}$\Box$\medskip}

\newcommand{\correction}[1]{\textcolor{black}{#1}}

\begin{document}

\title{Multi-Derivative Runge-Kutta Flux Reconstruction for hyperbolic
conservation laws}

\author{
Arpit~Babbar \orcidlink{0000-0002-9453-370X} \\
Centre for Applicable Mathematics\\
Tata Institute of Fundamental Research\\
Bangalore -- 560065\\
\texttt{arpit@tifrbng.res.in} \\
\And
Praveen~Chandrashekar \orcidlink{0000-0003-1903-4107}\thanks{Corresponding author}\\
Centre for Applicable Mathematics\\
Tata Institute of Fundamental Research\\
Bangalore -- 560065\\
\texttt{praveen@math.tifrbng.res.in}
}

\date{March 4, 2024}

\maketitle

\begin{abstract}
We extend the fourth order, two stage Multi-Derivative Runge Kutta (MDRK) scheme to the Flux Reconstruction (FR) framework by writing both stages in terms of a time averaged flux and then using the approximate Lax-Wendroff procedure to compute the time averaged flux. Numerical flux is carefully constructed to enhance Fourier CFL stability and accuracy. A subcell based blending limiter is developed for the MDRK scheme which ensures that the limited scheme is provably admissibility preserving. Along with being admissibility preserving, the blending scheme is constructed to minimize dissipation errors by using Gauss-Legendre solution points and performing MUSCL-Hancock reconstruction on subcells. The accuracy enhancement of the blending scheme is numerically verified on compressible Euler's equations, with test cases involving shocks and small-scale structures.
\end{abstract}

\keywords{Conservation laws \and hyperbolic
PDE \and Multi-derivative Runge-Kutta \and flux reconstruction \and
Admissibility preservation \and Shock capturing}
\section{Introduction}

Higher order methods incorporate higher order terms and perform more
computations per degree of freedom, increasing accuracy and arithmetic
intensity, and making these methods relevant to the current state of
memory-bound HPC hardware~{\cite{attig2011,subcommittee2014}}. Spectral
element methods like Discontinuous Galerkin (DG) and Flux Reconstruction (FR)
are two high order methods that are suitable for modern requirements due to
their parallelization, capability of handling curved meshes and shock
capturing. Discontinuous Galerkin (DG) is a Spectral Element Method first introduced by Reed and Hill~{\cite{reed1973}} for neutron transport equations and developed for fluid dynamics equations by Cockburn and Shu and others, see~{\cite{cockburn2000}} and the references therein. The DG method uses an approximate solution which is a polynomial within each element and is allowed to be discontinuous across element interfaces. The neighbouring DG elements are coupled only through the numerical flux and thus bulk of computations are local to the element, minimizing data transfers.

Flux Reconstruction (FR) is also a class of discontinuous Spectral Element
Methods introduced by Huynh~{\cite{Huynh2007}} and uses the same solution
approximation as DG. FR method is obtained by using the numerical flux and
correction functions to construct a continuous flux approximation and
collocating the differential form of the equation. Thus, FR is quadrature free
and all local operations can be vectorized. The choice of the correction
function affects the accuracy and stability of the
method~{\cite{Huynh2007,Vincent2011a,Vincent2015,Trojak2021}}; by properly
choosing the correction function and solution points, FR method can be shown
to be equivalent to some discontinuous Galerkin and spectral difference
schemes~{\cite{Huynh2007,Trojak2021}}. A provably non-linearly stable FR in
split form was obtained in~{\cite{Cicchino2022a}}.

FR and DG are procedures for discretizing the equation in space and can be
used to obtain a system of ODEs in time, i.e., a semi-discretization of the
PDE. The standard approach to solve the semi-discretization is to use a high
order multi-stage Runge-Kutta method. In this approach, the spatial
discretization has to be performed in every RK stage and thus the expensive
operations of MPI communication and limiting have to be performed multiple
times per time step.

An alternative approach is to use high order single-stage solvers, which
include ADER schemes~{\cite{Titarev2002,Dumbser2008}} and Lax-Wendroff
schemes~{\cite{Qiu2003,Qiu2005b,Zorio2017,Burger2017,Carrillo2021}};
see~{\cite{babbar2022,babbar2023admissibility}} for a more in-depth review.

Lax-Wendroff schemes perform a Taylor's expansion of the solution in time to the order of the desired accuracy and compute the temporal derivatives locally. Multiderivative Runge-Kutta (MDRK) schemes also use temporal derivatives but combine them with multiple stages to obtain the desired order of accuracy. As MDRK schemes use both temporal derivatives and multiple stages, they are a generalization of LW and standard multistage RK methods~{\cite{Seal2013}}. MDRK methods typically require fewer temporal derivatives in contrast to the Lax-Wendroff schemes and fewer stages in contrast to the standard RK methods, which is what makes them promising.

For a system of ODE\correction{s} of the form
\[
\dv{\uu}{t} = \bL \left( \uu \right)
\]
which is obtained after spatial discretization of a PDE, Runge-Kutta methods make use of only the right hand side function $\bL$.  Multiderivative Runge-Kutta (MDRK)~{\cite{obrechkoff1940neue}} methods were initially developed to solve systems of ODE\correction{s} that also make use temporal derivatives of $\bL$, see~{\cite{Seal2013}} for a review of MDRK methods for solving ODE\correction{s}. They were first used for temporal discretization of hyperbolic conservation laws in~{\cite{Seal2013}} by using Weighted Essentially Non-Oscillatary (WENO)~{\cite{Shu1989}} and Discontinuous Galerkin (DG)~{\cite{cockburn2000}} methods for spatial discretization.

In~{\cite{babbar2022}}, a Lax-Wendroff Flux Reconstruction (LWFR) scheme was
proposed which used the approximate Lax-Wendroff procedure
of~{\cite{Zorio2017}} to obtain an element local high order approximation of
the time averaged flux which was made globally continuous using FR; the
globally continuous time averaged flux approximation was used to perform
evolution to the next time level. The numerical flux was carefully constructed
in~{\cite{babbar2022}} to obtain enhanced accuracy and stability.
In~{\cite{babbar2023admissibility}}, a subcell based shock capturing blending
scheme was introduced for LWFR based on the subcell based scheme
of~{\cite{hennemann2021}}. To enhance
accuracy,~{\cite{babbar2023admissibility}} used Gauss-Legendre solution points
and performed MUSCL-Hancock reconstruction on the subcells. Since the subcells
used in~{\cite{babbar2023admissibility}} were inherently non-cell centred, the
MUSCL-Hancock scheme was extended to non-cell centred grids along with the
proof of~{\cite{Berthon2006}} for admissibility preservation. The subcell
structure was exploited to obtain a provably admissibility preserving LWFR
scheme by careful construction of the \tmtextit{blended numerical flux} at the
element interfaces.

In~{\cite{li2016}}, a two stage fourth order MDRK scheme was introduced for
solving hyperbolic conservation laws by solving a Generalized Riemann Problem
(GRP). In this work, we show the first combination of MDRK with a Flux
Reconstruction scheme by using the scheme of~{\cite{li2016}}. The idea to apply Multiderivative Runge-Kutta Flux Reconstruction on the time
dependent conservation law
\[
\uu_t + \pf \left( \uu \right)_x = \bzero
\]
is to cast the fourth order multi-derivative Runge-Kutta scheme
of~{\cite{li2016}} in the form of
\begin{equation*}
\begin{split}
\uus & = \uu^n - \frac{\mathLaplace t}{2} \partial_x  \Fone
\\
\uu^{n + 1} & = \uu^n - \mathLaplace t \partial_x  \Ftwo
\end{split}
\end{equation*}
where
\[
\Fone = \Fone \left( \uu^n \right) \approx
\frac{1}{\Delta t/2}  \int_{t^n}^{t^{n + 1 / 2}} \pf  \ud t, \qquad  \Ftwo
= \Ftwo \left( \uu^n, \uus \right) \approx \frac{1}{\Delta t}
\int_{t^n}^{t^{n + 1}} \pf  \ud t
\]
The method is two-stage; in the first stage, $\Fone$ is locally approximated and then flux reconstruction~{\cite{Huynh2007}} is used to construct a globally continuous approximation of $\Fone$ which is used to perform evolution to $\uus$; and the same procedure is then performed using $\Ftwo$ for evolution to $\uu^{n + 1}$. We also use the construction of the numerical flux from~{\cite{babbar2022}}; in particular, we use the D2 dissipation introduced in~{\cite{babbar2022}} and show that it leads to enhanced Fourier CFL stability limit. We also use the {\evaluate} scheme from~{\cite{babbar2022}} which leads to enhanced accuracy for non-linear problems when using Gauss-Legendre solution points. We also develop admissibility preserving subcell based blending scheme based on~{\cite{babbar2023admissibility}} and show how it superior to other schemes like a TVB limiter.

The rest of this paper is organized as follows. The discretization of the domain and function approximation by polynomials is presented in Section~\ref{sec:mdrk.scl}. Section~\ref{sec:mdrk.rk} reviews the one dimensional Runge-Kutta Flux Reconstruction (RKFR) method and Section~\ref{sec:mdrk.mdrk} introduces the MDRK method in an FR framework. In particular, Section~\ref{sec:mdrk.alw} discusses approximate Lax-Wendroff procedure applied to MDRK, Section~\ref{sec:mdrk.numflux} discusses the D2 dissipation for computing the dissipative part of the numerical flux to enhance Fourier CFL stability limit and Sections~\ref{sec:mdrk.ae},~\ref{sec:mdrk.ea} discuss the {\evaluate} scheme for computing the central part of numerical flux to enhance stability. The Fourier stability analysis is performed in Section~\ref{sec:mdrk.fourier} to demonstrate the improved stability of D2 dissipation. In Section~\ref{sec:mdrk.blending}, we explain the admissibility preserving blending limiter for MDRK scheme. The numerical results validating the order of accuracy and capability of the blending scheme are shown in Section~\ref{sec:mdrk.num} and a summary of the new MDRK scheme is presented in Section~\ref{sec:mdrk.conclusion}.

\section{Conservation law and solution space}\label{sec:mdrk.scl}

Consider a conservation law of the form
\begin{equation}
\uu_t + \ff (\bw)_x = \bzero \label{eq:con.law}
\end{equation}
where $\uu \in \re^p$ is the vector of conserved quantities, $\ff (\bw)$ is
the corresponding flux, together with some initial and boundary conditions.
The solution that is physically correct is assumed to belong to an admissible
set, denoted by $\Uad$. For example, in case of compressible flows, the density
and pressure (or internal energy) must remain positive. In case of shallow
water equations, the water depth must remain positive. In most of the models
that are of interest, the admissible set is a convex subset of $\re^p$, and
can be written as
\begin{equation}
\label{eq:mdrk.uad.form} \Uad = \{ \uu \in \re^p : p_k (\bw) > 0, 1 \le k \le K\}
\end{equation}
where each admissibility constraint $p_k$ is a concave function if $p_j > 0$ for all $j <
k$. For Euler's equations, $K = 2$ and $p_1, p_2$ are density, pressure
functions respectively; if the density is positive then pressure is a concave
function of the conserved variables.

We will divide the computational domain
$\Omega$ into disjoint elements $\Omega_e$, with
\[
\Omega_e = [x_{\emh}, x_{\eph}] \qquad \textrm{and} \qquad \Delta x_e =
x_{\eph} - x_{\emh}
\]
Let us map each element to a reference element, $\Omega_e \to [0, 1]$, by
\[
x \to \xi = \frac{x - x_{\emh}}{\Delta x_e}
\]
Inside each element, we approximate the solution by $\poly_N$ functions which
are degree $N \geq 0$ polynomials.

\begin{remark}
In this work, we will only be using degree $N = 3$ polynomials as the
Multiderivative Runge-Kutta discretization that we use~{\cite{li2016}} is
fourth order accurate in time. However, the description is given for general $N$ for ease of
notation.
\end{remark}

To construct $\poly_N$ polynomials, choose $N + 1$ distinct nodes
\[ 0 \le \xi_0 < \xi_1 < \cdots < \xi_N \le 1 \]
which will be taken to be Gauss-Legendre (GL) or \correction{Gauss-Legendre-Lobatto} (GLL)
nodes, and will also be referred to as \tmtextit{solution points}. There are
associated quadrature weights $w_j$ such that the quadrature rule is exact for
polynomials of degree up to $2 N + 1$ for GL points and \correction{up to} degree $2 N - 1$
for GLL points. Note that the nodes and weights we use are with respect to the
interval $[0, 1]$ whereas they are usually defined for the interval $[- 1, +
1]$. The solution inside an element is given by
\[ x \in \Omega_e : \qquad u_h (\xi, t) = \sum_{p = 0}^N \uep (t) \ell_p (\xi)
\]
where each $\ell_p$ is a Lagrange polynomial of degree $N$ given by
\[ \ell_q (\xi) = \prod_{p = 0, p \ne q}^N \frac{\xi - \xi_p}{\xi_q - \xi_p}
\in \poly_N, \qquad \ell_q (\xi_p) = \delta_{p \nocomma q} \]
Figure~(\ref{fig:mdrk.solflux1}a) illustrates a piecewise polynomial solution at
some time $t_n$ with discontinuities at the element boundaries. Note that the
coefficients $\uep$ which are the basic unknowns or \tmtextit{degrees of
freedom} (dof), are the solution values at the solution points.

\begin{figure}
\centering
\begin{tabular}{cc}
\resizebox{0.40\columnwidth}{!}{\includegraphics{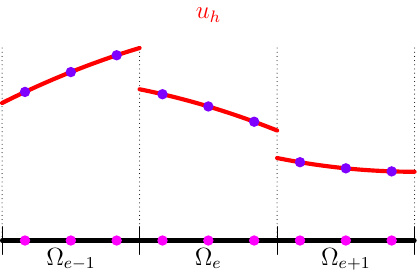}} &
\resizebox{0.40\columnwidth}{!}{\includegraphics{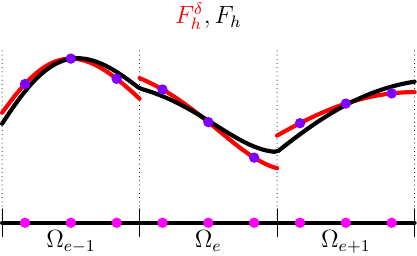}}\\
(a) & (b)
\end{tabular}
\caption{(a) Piecewise polynomial solution at time $t_n$, and (b)
discontinuous and continuous flux. \label{fig:mdrk.solflux1}}
\end{figure}

The numerical method will require spatial derivatives of certain quantities.
We can compute the spatial derivatives on the reference interval using a
differentiation matrix $\vD = [D_{p \nocomma q}]$ whose entries are given by
\[ D_{p \nocomma q} = \ell_q' (\xi_p), \qquad 0 \le p, q \le N. \]
For example, we can obtain the spatial derivatives of the solution $\uu_h$ at
all the solution points by a matrix-vector product as follows
\[ \left[ \begin{array}{c}
\partial_x  \uu_h (\xi_0, t)\\
\vdots\\
\partial_x  \uu_h (\xi_N, t)
\end{array} \right] = \frac{1}{\Delta x_e}  \vD \vu (t), \qquad \vu =
\left[ \begin{array}{c}
\uez\\
\vdots\\
\ueN
\end{array} \right] \]
We will use symbols in sans serif font like $\vD, \vu$, etc. to denote
matrices or vectors defined with respect to the solution poi{\tmsamp{}}nts.
Define the Vandermonde matrices corresponding to the left and right boundaries
of a cell by
\begin{equation}
\vV_L = [\ell_0 (0), \ell_1 (0), \ldots, \ell_N (0)]^{\top}, \qquad \vV_R =
[\ell_0 (1), \ell_1 (1), \ldots, \ell_N (1)]^{\top}\label{eq:mdrk.VlVr}
\end{equation}
which is used to extrapolate the solution and/or flux to the cell faces for
the computation of inter-cell fluxes.

\section{Runge-Kutta FR}\label{sec:mdrk.rk}
We first give a brief resume of RKFR scheme since our method has some similarities and borrows ideas from FR method. The RKFR scheme is based on an FR spatial discretization leading to a system of ODE\correction{s} followed by application of an RK scheme to march forward in time. The key idea is to construct a continuous polynomial approximation of the flux which is then used in a collocation scheme to update the nodal solution values. At some time $t$, we have the piecewise polynomial solution defined inside each cell; the FR scheme can be described by the following steps.

\paragraph{Step 1.}In each element, we construct the flux approximation by
interpolating the flux at the solution points leading to a polynomial of
degree $N$, given by
\[ \pf_h^{\delta} (\xi, t) = \sum_{p = 0}^N \pf (\uep (t)) \ell_p (\xi) \]
The above flux is in general discontinuous across the elements similar to the
red curve in Figure~(\ref{fig:mdrk.solflux1}b).

\paragraph{Step 2.}We build a continuous flux approximation by adding some
correction terms at the element boundaries
\begin{equation}
\pf_h (\xi, t) = \left[ \pf_{\emh} (t) - \pf_h^{\delta} (0, t) \right] g_L
(\xi) + \pf_h^{\delta} (\xi, t) + \left[ \pf_{\eph} (t) - \pf_h^{\delta}
(1, t) \right] g_R (\xi) \label{eq:cts.flux}
\end{equation}
where
\[ \pf_{\eph} (t) = \pf (\uu_h (x_{\eph}^-, t), \uu_h (x_{\eph}^+, t)) \]
is a numerical flux function that makes the flux unique across the cells. The
continuous flux approximation is illustrated by the black curve in
Figure~(\ref{fig:mdrk.solflux1}b). The functions $g_L, g_R$ are the correction
functions which must be chosen to obtain a stable scheme.

\paragraph{Step 3.}We obtain the system of ODE\correction{s} by collocating the PDE at the
solution points
\[ \od{\uu_p^e}{t} (t) = - \frac{1}{\Delta x_e}  \correction{(\pf_h)_\xi} (\xi_p, t),
\qquad 0 \le p \le N \]
which is solved in time by a Runge-Kutta scheme.

\paragraph{Correction functions.}The correction functions $g_L, g_R$ should
satisfy the end point conditions
\begin{align*}
g_L (0) = g_R(1) = 1, \qquad & g_R (0) = g_L(1) = 0
\end{align*}
which ensures the continuity of the flux, i.e., $\pf_h (x_{\eph}^-, t) = \pf_h
(x_{\eph}^+, t) = \pf_{\eph} (t)$. The reader is referred
to~{\cite{babbar2022,Huynh2007,Vincent2011a}} for detailed discussion and choices of correction functions.
\correction{
\begin{remark}
The global flux approximation in~\eqref{eq:cts.flux} is globally continuous in 1-D. However, in higher dimensions, the flux is only made continuous along the normal direction to the element interfaces.
\end{remark}
}
\section{Multi-derivative Runge-Kutta FR scheme}\label{sec:mdrk.mdrk}

Multiderivative Runge-Kutta~{\cite{obrechkoff1940neue}} methods were initially
developed to solve systems of ODE\correction{s} like
\begin{equation}
\label{eq:ode.L} \dv{\tmmathbf{u}}{t} = \tmmathbf{L} (\tmmathbf{u})
\end{equation}
that use temporal derivatives of $\tmmathbf{L}$. In this work, we use the two stage fourth order multiderivative Runge-Kutta method from~\cite{li2016}. For the system of ODEs~\eqref{eq:ode.L}, the MDRK
scheme of~\cite{li2016} to evolve from $t^n$ to $t^{n + 1}$ is given by
\begin{equation*}
\begin{split}
\tmmathbf{u}^{\ast} & = \tmmathbf{u}^n + \frac{1}{2} \mathLaplace t
\tmmathbf{L} (\tmmathbf{u}^n) + \frac{\mathLaplace t^2}{8}
\dv{\tmmathbf{L}}{t} (\tmmathbf{u}^n)\\
\tmmathbf{u}^{n + 1} & = \tmmathbf{u}^n + \mathLaplace t \tmmathbf{L}
(\tmmathbf{u}^n) + \frac{\mathLaplace t^2}{6}  \left(
\dv{\tmmathbf{L}}{t} (\tmmathbf{u}^n) + 2 \dv{\tmmathbf{L}}{t}
(\tmmathbf{u}^{\ast}) \right)
\end{split}
\end{equation*}
In order to solve the 1-D conservation law~\eqref{eq:con.law} using the above
scheme, we formally set $\tmmathbf{L} = - \pf (\tmmathbf{u})_x$ to get the
following two stage procedure
\begin{align}
\tmmathbf{u}^{\ast} & = \tmmathbf{u}^n - \frac{\mathLaplace t}{2}
\partial_x \tmmathbf{F} \label{eq:mdrk.mdrk.first.stage} \\
\tmmathbf{u}^{n + 1} & = \tmmathbf{u}^n - \mathLaplace t \partial_x
\tmmathbf{F}^{\ast} \label{eq:mdrk.mdrk.second.stage}
\end{align}
where
\begin{equation}
\begin{split}
\tmmathbf{F} & \assign \pf \left( \bu^n \right) + \frac{1}{4}
\mathLaplace t \correction{\pf_t} \left( \bu^n \right)\\
\tmmathbf{F}^{\ast} & \assign \pf (\tmmathbf{u}^n) + \frac{1}{6}
\mathLaplace t \left( \correction{\pf_t} (\tmmathbf{u}^n) + 2\correction{\pf_t}
(\tmmathbf{u}^{\ast}) \right)
\end{split} \label{eq:mdrk.f2.defn}
\end{equation}
The formal order of accuracy of the scheme
(Appendix~\ref{sec:mdrk.formal.accuracy}) is obtained from
\[ \partial_x \tmmathbf{F}^{\ast} = \frac{1}{\mathLaplace t} \partial_x
\int_{t^n}^{t^{n + 1}} \pf + O (\mathLaplace t^4) \]
The idea is to use~(\ref{eq:mdrk.mdrk.first.stage},~\ref{eq:mdrk.mdrk.second.stage}) to obtain solution update at the nodes written as a collocation scheme
\begin{equation}
\begin{split}
\uus_{e,p} &= \uep^n - \frac{\Delta t}{2 \Delta x_e}  \od{\Fone_h}{\xi}
(\xi_p)\\
\uep^{n + 1} & = \uep^n - \frac{\Delta t}{\Delta x_e}  \od{\Ftwo_h}{\xi}
(\xi_p)
\end{split}, \qquad 0 \le p \le N \label{eq:mdrk.uplwfr}
\end{equation}
where we take $N = 3$ to get fourth order accuracy. The major work is in the construction of the time average flux approximations $\Fone_h, \Ftwo_h$ which is explained in subsequent sections.

\subsection{Conservation property}

The computation of correct weak solutions for non-linear conservation laws in
the presence of discontinuous solutions requires the use of conservative
numerical schemes. The Lax-Wendroff theorem shows that if a consistent,
conservative method converges, then the limit is a weak solution. The
method~\eqref{eq:mdrk.uplwfr} is also conservative though it is not directly
apparent; to see this multiply~\eqref{eq:mdrk.uplwfr} by the quadrature weights
associated with the solution points and sum over all the points in the
$e^{\text{th}}$ element,
\begin{align}
\correction{\au_e^{*} := } \sum_{p = 0}^N w_p  \uep^{\ast} & = \sum_{p = 0}^N w_p  \uep^n -
\frac{\Delta t}{2 \Delta x_e}  \sum_{p = 0}^N w_p  \correction{\od{\Fone_h}{\xi}}
(\xi_p) \nonumber \\
\correction{\au_e^{n+1} := }\sum_{p = 0}^N w_p  \uep^{n + 1} & = \sum_{p = 0}^N w_p  \uep^n -
\frac{\Delta t}{\Delta x_e}  \sum_{p = 0}^N w_p  \correction{\od{\Ftwo_h}{\xi}} (\xi_p) \label{eq:au.defn}
\end{align}
The correction functions are of degree $N + 1$ and thus the fluxes $\Fone_h,
\Ftwo_h$ are polynomials of degree $\le N + 1$. If the quadrature is exact for
polynomials of degree at least $N$, which is true for both GLL and GL points,
then the quadrature is exact for the flux derivative term and we can write it
as an integral, which leads to
\begin{equation}
\begin{split}
\int_{\Omega_e} \uus_h  \ud x & = \int_{\Omega_e} \uu_h^n  \ud x - \frac{\Delta
t}{2} [\Fone_{\eph} - \Fone_{\emh}] \\
\int_{\Omega_e} \uu^{n + 1}_h  \ud x & = \int_{\Omega_e} \uu_h^n  \ud x -
\Delta t [\Ftwo_{\eph} - \Ftwo_{\emh}]
\end{split} \label{eq:mdrk.upmean}
\end{equation}
This shows that the total mass inside the cell changes only due to the
boundary fluxes and the scheme is hence conservative. The conservation
property is crucial in the proof of admissibility preservation studied
in~Section~\ref{sec:mdrk.flux.limiter}.

Recall that the solutions of conservation law~\eqref{eq:con.law} belong to
an admissible set $\Uad$~\eqref{eq:mdrk.uad.form}. The admissibility preserving
property, also known as convex set preservation property (since $\Uad$ is
convex) of the conservation law can be written as
\begin{equation}
\label{eq:mdrk.conv.pres.con.law} \bw (\cdummy, t_0) \in \Uad \qquad
\Longrightarrow \qquad \bw (\cdummy, t) \in \Uad, \qquad t > t_0
\end{equation}
and thus we define an admissibility preserving scheme to be

\begin{definition}
\label{defn:mdrk.adm.pres}The flux reconstruction scheme is said to be
admissibility preserving if
\[ \uep^n \in \Uad \quad \forall e, p \qquad \Longrightarrow \qquad \uep^{n
+ 1} \in \Uad \quad \forall e, p \]
where $\Uad$ is the admissibility set of the conservation law.
\end{definition}

To obtain an admissibility preserving scheme, we exploit the weaker
admissibility preservation in means property defined as

\begin{definition}
\label{defn:mdrk.mean.pres}The flux reconstruction scheme is said to be
admissibility preserving in the means if
\[ \uep^n \in \Uad \quad \forall e, p \qquad \Longrightarrow \qquad \au_e^{n
+ 1} \in \Uad \quad \forall e \]
where $\Uad$ is the admissibility set of the conservation law\correction{, and $\au_e^{n+1}$ is element mean as given by~\eqref{eq:au.defn}}.
\end{definition}

\subsection{Reconstruction of the time average flux}\label{sec:mdrk.reconstruction}

To complete the description of the MDRK method~\eqref{eq:mdrk.uplwfr}, we must
explain the method for the computation of the time average fluxes $\Fone_h,
\Ftwo_h$ when evolving from $t^n$ to $t^{n + 1}$. In the first
stage~\eqref{eq:mdrk.mdrk.first.stage}, we compute $\Fone_h$ which is then used to
evolve to $\uus$. In the second stage~\eqref{eq:mdrk.mdrk.second.stage}, $\uu^n,
\uus$ are used to compute $\Ftwo_h$ which is used from evolution to $\uu^{n +
1}$. Since the reconstruction procedure is same for both of the time average
fluxes, we explain the procedure of Flux Reconstruction of $\Ftwo_h (\xi)$, which
is a time averaged flux over the interval $[t^n, t^{n + 1}]$, performed in
three steps. Note that all results in this work use degree $N = 3$, although
the following steps are written for general $N$.

\paragraph{Step 1.}Use the approximate Lax-Wendroff procedure
of~{\cite{Zorio2017}} to compute the time average flux $\Ftwo$ at all the
solution points
\[ \Ftwo_{e,p} \approx \Ftwo (\xi_p), \qquad 0 \le p \le N \]
The approximate LW procedure explained in Section~\ref{sec:mdrk.alw}.

\paragraph{Step 2.}Build a local approximation of the time average flux inside
each element by interpolating at the solution points
\[ \Ftwod_h (\xi) = \sum_{p = 0}^N \Ftwo_{e,p} \ell_p (\xi) \]
which however may not be continuous across the elements. This is illustrated
in Figure~\ref{fig:mdrk.solflux1}b.

\paragraph{Step 3.} Modify the flux approximation $\Ftwod_h (\xi)$ so
that it becomes continuous across the elements. Let $\Ftwo_{\eph}$ be some
numerical flux function that approximates the flux $\Ftwo$ at $x = x_{\eph}$.
Then the continuous flux approximation is given by
\[ \Ftwo_h (\xi) = \left[ \Ftwo_{\emh} - \Ftwod_h (0) \right] g_L (\xi) +
\Ftwod_h (\xi) + \left[ \Ftwo_{\eph} - \Ftwod_h (1) \right] g_R
(\xi) \]
which is illustrated in Figure~\ref{fig:mdrk.solflux1}b. The correction functions
$g_L, g_R \in \poly_{N + 1}$ are chosen from the FR
literature~{\cite{Huynh2007,Vincent2011a}}.

\paragraph{Step 4.}The derivatives of the continuous flux approximation at the
solution points can be obtained as
\[
\myvector{\partial_\xi \Ftwo_h} = \left[\Ftwo_\emh - \vV_L^\top \vFtwo \right] \vb_L +
\vD \vFtwo +
\left[\Ftwo_\eph - \vV_R^\top \vFtwo \right] \vb_R, \qquad \vb_L = \begin{bmatrix}
g_L'(\xi_0) \\
\vdots \\
g_L'(\xi_N) \end{bmatrix}, \qquad \vb_R = \begin{bmatrix}
g_R'(\xi_0) \\
\vdots \\
g_R'(\xi_N) \end{bmatrix}
\]
which can also be written as
\begin{equation}
\myvector{\partial_{\xi} \Ftwo_h} = \Ftwo_{\emh}  \vb_L + \vD_1
\vFtwo + \Ftwo_{\eph}  \vb_R, \qquad \vD_1 = \vD - \vb_L  \vV_L^{\top} - \vb_R
\vV_R^{\top} \label{eq:mdrk.fder}
\end{equation}
where $\vV_L, \vV_R$ are Vandermonde matrices which are defined
in~\eqref{eq:mdrk.VlVr}. The quantities $\vb_L, \vb_R, \vV_L, \vV_R, \vD, \vD_1$
can be computed once and re-used in all subsequent computations. They do not
depend on the element and are computed on the reference element.
Equation~\eqref{eq:mdrk.fder} contains terms that can be computed inside a single
cell (middle term) and those computed at the faces (first and third terms)
where it is required to use the data from two adjacent cells. The computation
of the flux derivatives can thus be performed by looping over cells and then
the faces.

\subsection{Approximate Lax-Wendroff procedure}\label{sec:mdrk.alw}

The time average fluxes $\Fone_p, \Ftwo_p$ must be computed
using~\eqref{eq:mdrk.f2.defn}, which involves temporal derivatives of the flux. A
natural approach is to use the PDE and replace time derivatives with spatial
derivatives, but this leads to large amount of algebraic computations since we
need to evaluate the flux Jacobian and its higher tensor versions. To avoid
this process, we follow the ideas in~{\cite{Zorio2017,Burger2017}} and adopt
an approximate Lax-Wendroff procedure. To present this idea in a concise and
efficient form, we introduce the notation
\[ \uu^{(1)} = \Delta t \partial_t  \uu, \qquad \pf^{(1)} = \Delta t
\partial_t  \pf \]
The time derivatives of the solution are computed using the PDE
\[ \uu^{(1)} = - \Delta t \partial_x  \pf \]
The approximate Lax-Wendroff procedure is applied in each element and so for
simplicity of notation, we do not show the element index in the following.

\paragraph{First stage.}

\begin{equation}
\Fone \assign \pf \left( \bu^n \right) + \frac{1}{4} \mathLaplace t
\correction{\pf_t} \left( \bu^n \right) \approx \frac{1}{\mathLaplace t / 2}
\int_{t^n}^{t^{n + 1 / 2}} \pf \left( \bu \right)  \ud t
\label{eq:Fone}
\end{equation}
To obtain fourth order accuracy, the approximation for $\pdv{}{t}  \pf \left(
\bu^n \right)$ needs to be at least third order
accurate~(Appendix~\ref{sec:mdrk.formal.accuracy}) which we obtain as

\begin{equation*}
\begin{split}
\pf_t (\xi, t) & \approx \frac{- \pf (\uu (\xi, t + 2 \Delta t)) + 8
\pf (\uu (\xi, t + \Delta t)) - 8 \pf (\uu (\xi, t - \Delta t)) + \pf
(\uu (\xi, t - 2 \Delta t))}{12 \Delta t} \\
& \approx \left. \frac{- \pf (\uu + 2 \mathLaplace t \uu_t) + 8 \pf
(\uu + \mathLaplace t \uu_t) - 8 \pf (\uu - \mathLaplace t \uu_t) + \pf
(\uu - 2 \mathLaplace t \uu_t)}{12 \Delta t} \right|_{(\xi, t)}
\end{split}
\end{equation*}

We denote $\vf$ as the vector of flux values at the solution points. Thus, the time averaged flux is computed as
\[ \vFone = \vf + \frac{1}{4}  \vf^{(1)} \]
where
\begin{align*}
\vu^{(1)} & = - \frac{\Delta t}{\Delta x_e}  \vD \vf\\
\vf^{(1)} & = \frac{1}{12}  \left[ - \pf ( \vu + 2 \vu^{(1)} )
+ 8 \pf ( \vu + \vu^{(1)} ) - 8 \pf ( \vu - \vu^{(1)}
) + \pf ( \vu - 2 \vu^{(1)} ) \right]
\end{align*}

\paragraph{Second stage.}

\begin{equation}
\tmmathbf{F}^{\ast} \assign \pf (\tmmathbf{u}^n) + \frac{1}{6}
\mathLaplace t \left( \pdv{}{t}  \pf (\tmmathbf{u}^n) + 2 \pdv{}{t}  \pf
(\tmmathbf{u}^{\ast}) \right)
\label{eq:Ftwo}
\end{equation}
The time averaged flux $\tmmathbf{F}^{\ast}$ is computed as
\[ \vFtwo = \vf + \frac{1}{6}  ( \vf^{(1)} + 2 \vfsone ) \]
where
\begin{align*}
\vu^{*(1)} & = - \frac{\Delta t}{\Delta x_e}  \vD \vfs \nonumber\\
\vf^{*(1)} & = \frac{1}{12}  \left[ - \pf ( \vus + 2 \vusone
) + 8 \pf ( \vus + \vusone ) - 8 \pf ( \vus -
\vusone ) + \pf ( \vus - 2 \vusone ) \right]
\end{align*}
\begin{remark}
The $\vf, \vf^{(1)}$ computed in the first stage are reused in the second.
\end{remark}

\subsection{Numerical flux}\label{sec:mdrk.numflux}

The numerical flux couples the solution between two neighbouring cells in a
discontinuous Galerkin type method. In RK methods, the numerical flux is a
function of the trace values of the solution at the faces. In the MDRK scheme,
we have constructed time average fluxes at all the solution points inside the
element and we want to use this information to compute the time averaged
numerical flux at the element faces. The simplest numerical flux is based on
Lax-Friedrich type approximation and is similar to the one used for
LW~{\cite{Qiu2005b}} and is similar to the D1 dissipation
of~{\cite{babbar2022}}
\begin{equation}
\begin{split}
\Fone_{\eph} & = \half  [\Fone_{\eph}^{-} + \Fone_{\eph}^{+}] - \half
\lambda_{\eph}  [\uu_h (x_{\eph}^+, t_n) - \uu_h (x_{\eph}^-, t_n)]\\
\Ftwo_{\eph}  & = \half  [\Ftwom_{\eph} + \Ftwop_{\eph}] - \half \lambda_{\eph}
[\uu_h (x_{\eph}^+, t_n) - \uu_h (x_{\eph}^-, t_n)]
\end{split} \label{eq:mdrk.nfdiss1}
\end{equation}
which consists of a central flux and a dissipative part. For linear advection
equation $\uu_t + a \uu_x = \bzero$, the coefficient in the dissipative part
of the flux is taken as $\lambda_{\eph} = |a|$, while for a non-linear PDE
like Burger's equation, we take it to be
\[ \lambda_{\eph} = \max \{| \pf' (\au_e^n) |, | \pf' (\au_{e + 1}^n) |\} \]
where $\au_e^n$ is the cell average solution in element $\Omega_e$ at time
$t_n$, and will be referred to as Rusanov or local
Lax-Friedrich~{\cite{Rusanov1962}} approximation. Note that the dissipation
term in the above numerical flux is evaluated at time $t_n$ whereas the
central part of the flux uses the time average flux. Since the dissipation
term contains solution difference at faces, we still expect to obtain optimal
convergence rates, which is verified in numerical experiments. This numerical
flux depends on the following quantities: $\{ \au_e^n, \au_{e + 1}^n, \uu_h
(x_{\eph}^-, t_n), \uu_h (x_{\eph}^+, t_n), \F_{\eph}^-, \F_{\eph}^+ \}$.

The numerical flux of the form~\eqref{eq:mdrk.nfdiss1} leads to somewhat reduced
CFL numbers as is experimentally verified and discussed in
Section~\ref{sec:mdrk.fourier}, and also does not have upwind property even for
linear advection equation. An alternate form of the numerical flux is obtained
by evaluating the dissipation term using the time average solution, leading to
the formula similar to D2 dissipation of~{\cite{babbar2022}}
\begin{equation}
\begin{split}
\Fone_{\eph} & = \half  [\Fone_{\eph}^- + \Fone_{\eph}^+] -
\half \lambda_{\eph}  [\uUone_{\eph}^+ - \uUone_{\eph}^-]\\
\Ftwo_{\eph} & = \half  [\Ftwom_{\eph} + \Ftwop_{\eph}] - \half \lambda_{\eph}
[\uUtwop_{\eph} - \uUtwom_{\eph}]
\end{split} \label{eq:mdrk.nfdiss2}
\end{equation}
where
\begin{equation}
\begin{split}
\uU^{\ast} & = \vu + \frac{1}{4}  \vu^{(1)}\\
\uU & = \vu + \frac{1}{6}  \left( \vu^{(1)} + 2 \vusone \right)
\end{split} \label{eq:mdrk.tavgsol}
\end{equation}
are the time average solutions. In this case, the numerical flux depends on
the following quantities: $\{ \au_e^n, \au_{e + 1}^n, \uU_{\eph}^-,
\uU_{\eph}^+, \F_{\eph}^-, \F_{\eph}^+ \}$. Following~{\cite{babbar2022}}, we
will refer to the above two forms of dissipation as D1 and D2, respectively.
The dissipation model D2 is not computationally expensive compared to the D1
model since since all the quantities required to compute the time average solutions
$\tmmathbf{U}, \tmmathbf{U}^{\ast}$ are available during the Lax-Wendroff
procedure. It remains to explain how to compute $\tmmathbf{F}^{\pm}_{e +
\frac{1}{2}}, \tmmathbf{F}^{\ast \pm}_{e + \frac{1}{2}}$ appearing in the
central part of the numerical flux. There were two ways introduced for Lax-Wendroff
in~{\cite{babbar2022}} to compute the central flux, termed {\extrapolate} and
{\evaluate}. We explain how the two apply to MDRK in the next two subsections.

\subsection{Numerical flux -- average and extrapolate to face
(AE)}\label{sec:mdrk.ae}

In each element, the time average fluxes $\tmmathbf{F}_h^{\delta} {,
\tmmathbf{F}^{\ast}_h}^{\delta}$ corresponding to each stage have been
constructed using the Lax-Wendroff procedure. The simplest approximation that
can be used for $\tmmathbf{F}^{\pm}_{\eph}, \tmmathbf{F}^{\ast \pm}_{\eph}$ in
the central part of the numerical flux is to extrapolate the fluxes
$\tmmathbf{F}_h^{\delta} {, \tmmathbf{F}^{\ast}_h}^{\delta}$to the faces
\[ \tmmathbf{F}^{\pm}_{\eph}, \tmmathbf{F}^{\ast \pm}_{\eph}
=\tmmathbf{F}_h^{\delta} (x_{\eph}^{\pm}) {,
\tmmathbf{F}^{\ast}_h}^{\delta} (x_{\eph}^{\pm}) \]
As in~{\cite{babbar2022}}, we will refer to this approach with the
abbreviation {\extrapolate}. However, as shown in the numerical results, this
approximation can lead to sub-optimal convergence rates for some non-linear
problems. Hence we propose another method for the computation of the
inter-cell flux which overcomes this problem as explained next.

\subsection{Numerical flux -- extrapolate to face and average
(EA)}\label{sec:mdrk.ea}

Instead of extrapolating the time average flux from the solution points to the
faces, we can instead build the time average flux at the faces directly using
the approximate Lax-Wendroff procedure that is used at the solution points.
The flux at the faces is constructed after the solution is evolved at all the
solution points. In the following equations, $\alpha$ denotes either the left
face ($L$) or the right face ($R$) of a cell. For $\alpha \in \{L, R\}$, we
compute the time average flux at the faces of the $e$\tmrsup{th} element by
the following steps, where we suppress the element index since all the
operations are performed inside one element.

\paragraph{Stage 1.}

\begin{align*}
\uu_{\alpha}^{\pm} & = \vV_{\alpha}^{\top} ( \vu \pm \vu^{(1)}
)\\
\pf^{(1)}_{\alpha} & = \frac{1}{12}  [ - \pf ( \uu^{+
2}_{\alpha} ) + 8 \pf \left( \uu^+_{\alpha} \right) - 8 \pf \left(
\uu^-_{\alpha} \right) + \pf \left( \uu^{- 2}_{\alpha} \right) ]\\
\F_{\alpha}^{\ast} & = \pf (\uu_{\alpha}) + \frac{1}{4}
\pf^{(1)}_{\alpha}
\end{align*}

\paragraph{Stage 2.}

\begin{align*}
\uu_{\alpha}^{\ast \pm} & = \vV_{\alpha}^{\top} ( \vus \pm \vusone
)\\
\uu_{\alpha}^{\ast \pm 2} & = \vV_{\alpha}^{\top} ( \vus \pm 2
\vusone )\\
\pf^{\ast (1)}_{\alpha} & = \frac{1}{12}  [ - \pf ( \uu^{\ast +
2}_{\alpha} ) + 8 \pf ( \uu^{\ast +}_{\alpha} ) - 8 \pf
( \uu^{\ast -}_{\alpha} ) + \pf ( \uu^{\ast - 2}_{\alpha}
) ]\\
\F_{\alpha} & = \pf (\uu_{\alpha}) + \frac{1}{6}  (
\pf^{(1)}_{\alpha} + 2 \pf^{\ast (1)} )
\end{align*}
\begin{remark}
The $\pf (\uu_{\alpha})$, $\pf^{(1)}_{\alpha}$ computed in the first stage,
are reused in the second stage.
\end{remark}

We see that the solution is first extrapolated to the cell faces and the same
finite difference formulae for the time derivatives of the flux which are used
at the solution points, are also used at the faces. The numerical flux is
computed using the time average flux built as above at the faces; the central
part of the flux $\F_{\eph}^{\pm}$ in
equations~\eqref{eq:mdrk.nfdiss1},~\eqref{eq:mdrk.nfdiss2} are computed as
\[
\begin{array}{c}
\tmmathbf{F}^-_{e + \frac{1}{2}} = (\tmmathbf{F}_R)_e, \qquad
\tmmathbf{F}^+_{e + \frac{1}{2}} = (\tmmathbf{F}_L)_{e + 1}\\
\tmmathbf{F}^{\ast -}_{e + \frac{1}{2}} = (\tmmathbf{F}^{\ast}_R)_e,
\qquad \tmmathbf{F}^{\ast +}_{e + \frac{1}{2}} =
(\tmmathbf{F}^{\ast}_R)_{e + 1}
\end{array}
\]
We will refer to this method with the abbreviation {\evaluate}.

\begin{remark}
The two methods {\extrapolate} and {\evaluate} are different only when there
are no solution points at the faces. E.g., if we use GLL solution points,
then the two methods yield the same result since there is no interpolation
error. For the constant coefficient advection equation, the above two
schemes for the numerical flux lead to the same approximation but they
differ in case of variable coefficient advection problems and when the flux
is non-linear with respect to $u$. The effect of this non-linearity and the
performance of the two methods are shown later using some numerical
experiments.
\end{remark}

\section{Fourier stability analysis}\label{sec:mdrk.fourier}

We now perform Fourier stability analysis of the MDRK scheme applied to the
linear advection equation, $u_t + au_x = 0$, where $a$ is the constant advection speed. We will assume that the advection speed $a$ is positive and denote the CFL number by
\[ \sigma = \frac{a \Delta t}{\Delta x} \]
We will perform the CFL stability analysis for the MDRK scheme with D2
dissipation flux~\eqref{eq:mdrk.nfdiss2} and thus will like to write the two stage
scheme as
\begin{equation}
\vu_e^{n + 1} = - \sigma^2  \vA_{- 2}  \vu_{e - 2} - \sigma \vA_{- 1}
\vu_{e - 1}^n + \left( 1 - \sigma \vA_0 \right)  \vu_e^n - \sigma \vA_{+ 1}
\vu_{e + 1}^n - \sigma^2  \vA_{+ 2}  \vu_{e + 2}^n
\label{eq:mdrk.2stage.update.eqn}
\end{equation}
where the matrices $\vA_{- 2}, \vA_{- 1}, \vA_0, \vA_{+ 1}, \vA_{+ 2}$ depend
on the choice of the dissipation model in the numerical flux. We will perform
the final evolution by studying both the stages.

\subsection{Stage 1}

We will try to write the first stage as
\begin{equation}
\vu_e^{\ast} = - \sigma \vAone_{- 1}  \vu_{e - 1}^n + ( 1 - \sigma
\vAone_0 )  \vu_e^n - \sigma \vAone_{+ 1}  \vu_{e + 1}^n
\label{eq:mdrk.A0A1.matrices}
\end{equation}
Since $f_t = au_t$, the time average flux for first stage at all solution
points is given by
\[ \vFone_e = a \vUone_e \quad \text{where} \quad \vUone_e =
\vTone  \vu_e \quad \text{and} \quad \vTone = \vI -
\frac{\sigma}{4}  \vD \]
The extrapolation to the cell boundaries are given by
\[ F_h^\delta (x_{\emh}^+) = \cV_L^T  \vFone_e, \qquad F_h^{\delta} (x_{\eph}^+) = \vV_R^T \vF_e \]
The D2 dissipation numerical flux is given by
\[ \vFone_{\eph} = \vV_R^T  \vFone_e= a \vV_R^T  \vTone  \vu_e \]
and the flux differences at the face as
\[ F_{\emh} - F_h^{\delta} (x_{\emh}^+) = a \vV_R^T  \vu_{e - 1}
- a \vV_L^T  \vTone  \vu_e, \qquad F_{\eph} - F_h^{\delta}(x_{\eph}^-) = 0 \]
so that the flux derivative at the solution points is given by
\[
\partial_{\xi}  \vFone_h = \vb_L  ( a \vV_R^T  \vTone  \vu_{e
- 1} - a \vV_L^T  \vTone  \vu_e ) + a \vD  \vTone  \vu_e = a
\vb_L  \vV_R^T  \vTone  \vu_{e - 1} + a ( \vD  \vTone - \vb_L
\vV_L^T  \vTone )  \vu_e
\]
Since the evolution to $\vu^{\ast}$ is given by
\begin{equation}
\vu^{\ast} = \vu^n - \frac{\mathLaplace t / 2}{\mathLaplace x_e}
\partial_{\xi}  \vFone_h \label{eq:mdrk.us.fourier}
\end{equation}
the matrices in~\eqref{eq:mdrk.A0A1.matrices} are given by
\begin{equation}
\vAone_{- 1} = \frac{1}{2}  \vb_L  \vV_R^T  \vTone, \qquad
\vAone_0 = \frac{1}{2}  ( \vD  \vTone - \vb_L  \vV_L^T
\vTone ), \qquad \vAone_{+ 1} = 0 \label{eq:mdrk.A.star.defn}
\end{equation}
The upwind character of the D2 dissipation flux leads to $\vAone_{+ 1} =
0$ and the right cell does not appear in the update equation.

\subsection{Stage 2}

After stage 1, we have $\vu^{\ast}, \vu^n$ and both are used to obtain $\vu^{n
+ 1}$. In this case,
\[ \vF^*_e = a \vU^*_e, \qquad \vU^*_e = \vu_e^n - \frac{1}{6} \sigma \vD
\vu_e^n - \frac{1}{3} \sigma \vD  \vu_e^{\ast} = \vT^{(2)}  \vu_e^n + \vT^{(2,
\ast)}  \vu_e^{\ast} \]
where
\[ \vT^{(2)} = \vI - \frac{1}{6} \sigma \vD, \qquad \vT^{(2, \ast)} = - \frac{1}{3}
\sigma \vD \]
The numerical fluxes are given by
\[
\vF^*_{\eph} = \frac{1}{2}  \left[ \vV_R^T  \vF_e^* + \vV_L^T  \vF_{e +
1}^* \right] - \frac{1}{2} a \left( \vV_L^T  \vU^*_{e + 1} - \vV_R^T
\vU^*_e \right)   = a \vV_R^T  \vU_e^* \qquad \textrm{and} \qquad
\vF^*_{\emh} = a \vV_R^T  \vU_{e - 1}^*
\]
and the face extrapolations are
\[
F_h^{\delta} (x_{\eph}^-)  = \vV_R^T  \vF_e^* = a \vV_R^T  \vU_e^*, \qquad
F_h^{\delta} (x_{\emh}^+)  = \cV_L^T  \vF_e^* = a \vV_L^T  \vU_e^*
\]
Thus, the flux difference at the faces is
\begin{equation}
\begin{split}
F_{\eph} - F_h^{\delta} (x_{\eph}^-) & = 0\\
F_{\emh} - F_h^{\delta} (x_{\emh}^+) & = a \left( \vV_R^T  \vU_{e - 1}^*
- \vV_L^T  \vU_e^* \right)
\end{split}
\end{equation}
and the flux derivative at the solution points is given by
\begin{equation}
\begin{split}
\partial_{\xi}  \vF_h^* & = a \vD  \vU^*_e + a \vb_L  \left( \vV_R^T
\vU_{e - 1}^* - \vV_L^T  \vU_e^* \right)\\
& = a \vb_L  \vV_R^T  \vU_{e - 1}^* + a \left( \vD - \vb_L  \vV_L^T
\right)  \vU^*_e
\end{split}
\end{equation}
We now expand $\vU_e^*$ in terms of $\vu_e^n$ as follows
\[
\vU_e^* = \vT^{(2)}  \vu_e^n + \vT^{(2, \ast)}  \vu_e^{\ast}
\qquad \textrm{where} \qquad
\vT^{(2)} = \vI - \frac{1}{6} \sigma \vD, \qquad \vT^{(2, \ast)} = - \frac{1}{3}
\sigma \vD
\]
Thus, using the 1-D update formula
\[ \vu^{n + 1} = \vu^n - \frac{\mathLaplace t}{\mathLaplace x_e}
\partial_{\xi}  \vF_h^* \]
and also expanding $\vu^{\ast}$ from~\eqref{eq:mdrk.us.fourier}, the matrices
in~\eqref{eq:mdrk.2stage.update.eqn} are given by
\begin{equation}
\begin{split}
\vA_{- 2} & = - \vb_L  \vV_R^T  \vT^{(2, \ast)}  \vAone_{- 1}\\
\vA_{- 1} & = \vb_L  \vV_R^T  ( \vT^{(2)} + \vT^{(2, \ast)}  ( 1 -
\sigma \vAone_0 ) ) - \sigma (  \vD - \vb_L  \vV_L^T
)  \vT^{(2, \ast)}  \vAone_{- 1} \\
\vA_0 & = - (  \vD - \vb_L  \vV_L^T )  ( \vT^{(2)} +
\vT^{(2, \ast)}  ( \vI - \sigma \vAone_0 ) )\\
\vA_{+ 1} & = \vA_{+ 2} = 0
\end{split}
\end{equation}
where $\vAone_i$ are defined in~\eqref{eq:mdrk.A.star.defn}. The upwind
character of D2 flux is reason why we have $\vA_{+ 1} = \vA_{+ 2} = 0$.

\paragraph{Stability analysis.}We assume a solution of the form $\vu_e^n =
\widehat{\vu}_k^n \exp (ikx_e)$ where $i = \sqrt{- 1}$, $k$ is the wave number
which is an integer and $\widehat{\vu}_k^n \in \mathbb{R}^{N + 1}$ are the
Fourier amplitudes; substituting this ansatz in the update
equation~\eqref{eq:mdrk.2stage.update.eqn}, we get
\begin{equation}
\begin{gathered}
\widehat{\vu}_k^{n + 1} = H (\sigma, k)  \widehat{\vu}_k^n\\
\cH = - \sigma^2  \vA_{- 2} \exp (- 2 i \kappa) - \sigma \vA_{- 1} \exp
(- i \kappa) + \vI - \sigma \vA_0 - \sigma \vA_{+ 1} \exp (i \kappa) -
\sigma^2  \vA_{+ 2} \exp (2 i \kappa)
\end{gathered}
\end{equation}
where $\kappa = k \mathLaplace x$ is the non-dimensional wave number. The
eigenvalues of $\vH$ depend on the CFL number $\sigma$ and wave number
$\kappa$, i.e., $\lambda = \lambda (\sigma, \kappa)$; for stability, all the
eigenvalues of $\vH$ must have magnitude less than or equal to one for all
$\kappa \in [0, 2 \pi]$, i.e.,
\[ \lambda (\sigma) = \max_{\kappa} | \lambda (\sigma, \kappa) | \le 1 \]
The CFL number is the maximum value of $\sigma$ for which above stability
condition is satisfied. This CFL number is determined approximately by
sampling in the wave number space; we partition $\kappa \in [0, 2 \pi]$ into a
large number of uniformly spaced points $\kappa_p$ and determine
\[ \lambda (\sigma) = \max_p | \lambda (\sigma, \kappa_p) | \]
The values of $\sigma$ are also sampled in some interval $[\sigma_{\min},
\sigma_{\max}]$ and the largest value of $\sigma_l \in [\sigma_{\min},
\sigma_{\max}]$ for which $\lambda (\sigma_l) \le 1$ is determined in a Python
code. We start with a large interval $[\sigma_{\min}, \sigma_{\max}]$ and then
progressively reduce the size of this interval so that the CFL number is determined to about three decimal places. The results of this numerical investigation of stability are shown in Table~\ref{tab:mdrk.cfl} for two correction functions with polynomial degree $N = 3$. The comparison is made with CFL numbers of MDRK-D1~\eqref{eq:mdrk.nfdiss1} which are experimentally obtained from the linear advection test case~(Section~\ref{sec:mdrk.cla}), i.e., using a time step size that is slightly larger than these numbers causes the solution to blow up.

\begin{table}
\centering {\begin{tabular}{|c|c|c|c|c|c|}
\hline
Correction & \begin{tabular}{c}
D1\\
$\left(\begin{tabular}{c}
\text{Experimentally}\\
\text{obtained}
\end{tabular}\right)$
\end{tabular} & D2 & $\frac{\text{D2}}{\text{D1}}$ &
\begin{tabular}{c}
LW-D2\\
\begin{tabular}{c}
\text{$(N=3)$}
\end{tabular}
\end{tabular}
& $\frac{\text{MDRK-D2}}{\text{LW-D2}}$ \\
\hline
Radau & {$\approx$}0.09 & 0.107 & 1.19 & 0.103 & 1.04 \\
\hline
$g_2$ & {$\approx$}0.16 & 0.224 & 1.4 & 0.170 & 1.31 \\
\hline
\end{tabular}}
\caption{CFL numbers for MDRK scheme with Radau and $g_2$ correction
functions.\label{tab:mdrk.cfl}}
\end{table}

We see that dissipation model D2 has a higher CFL number compared to dissipation model D1. The CFL numbers for the $g_2$ correction function are also significantly higher than those for the Radau correction function. The MDRK scheme also has higher CFL numbers than the single stage LW method~\cite{babbar2022} for degree $N = 3$, which especially with the $g_2$ correction function. The optimality of these CFL numbers has been verified by experiments on the linear advection test case~(Section~\ref{sec:mdrk.cla}), i.e., the solution eventually blows up if the time step is slightly higher than what is allowed by the CFL condition.

\section{Blending scheme}\label{sec:mdrk.blending}

The MDRK scheme~\eqref{eq:mdrk.uplwfr} gives a high (fourth) order method for smooth problems, \correction{but an important class of practical problems} for hyperbolic conservation laws consist of non-smooth solutions with shocks \correction{and other discontinuities}. In such situations, using a higher order method is bound to produce Gibbs oscillations, as stated in
Godunov's order barrier theorem~{\cite{godunov1959}}. The cure is to
non-linearly add dissipation in regions where the solution is non-smooth, with
methods like artificial viscosity, limiters and switching to a robust lower
order scheme; the resultant scheme will be non-linear even for linear
equations. In this work, we develop a subcell based blending scheme for MDRK
similar to the one in~{\cite{babbar2023admissibility}} for the high order
single stage Lax-Wendroff methods. The limiter is applied to each MDRK stage.

Let us write the MDRK update equation~\eqref{eq:mdrk.uplwfr}
\begin{equation}
\vu^{H, \ast}_e = \vu^n_e - \frac{\Delta t / 2}{\Delta x_e}  \vR^H_e, \qquad
\vu^{H, n + 1}_e = \vu^n_e - \frac{\Delta t}{\Delta x_e}  \vR^{\ast, H}_e
\label{eq:mdrk.ho.method}
\end{equation}
where $\vu_e$ is the vector of nodal values in the element. We use the lower
order schemes as
\begin{equation}
\label{eq:mdrk.lo.method} \vu^{L, \ast}_e = \vu^n_e - \frac{\Delta t /
2}{\Delta x_e}  \vR^L_e, \qquad \vu^{L, n + 1}_e = \vu^n_e - \frac{\Delta
t}{\Delta x_e}  \vR^{\ast, L}_e
\end{equation}
Then the two-stage blended scheme is given by
\begin{equation}
\begin{split}
\vu^{\ast}_e & = (1 - \alpha_e)  \vu^{H, \ast}_e + \alpha_e  \vu^{L,
\ast}_e = \vu^n_e - \frac{\Delta t / 2}{\Delta x_e}  [(1 - \alpha_e)
\vR^H_e + \alpha_e  \vR^L_e]\\
\vu^{n + 1}_e & = (1 - \alpha_e)  \vu^{H, n + 1}_e + \alpha_e  \vu^{L, n
+ 1}_e = \vu^n_e - \frac{\Delta t}{\Delta x_e}  [(1 - \alpha_e) \vR^{\ast,
H}_e + \alpha_e  \vR^{\ast, L}_e]
\end{split} \label{eq:mdrk.blended.scheme}
\end{equation}
where $\alpha_e \in [0, 1]$ must be chosen based on the local smoothness
indicator. If $\alpha_e = 0$ then we obtain the high order MDRK scheme, while
if $\alpha_e = 1$ then the scheme becomes the low order scheme that is less
oscillatory. In subsequent sections, we explain the details of the lower order
scheme; \correction{for a description of the smoothness indicator, the reader is referred to~\cite{hennemann2021,babbar2023admissibility}}. The lower order scheme will
either be a first order finite volume scheme or a high resolution scheme based
on MUSCL-Hancock idea. In either case, the common structure of the low order
scheme can be explained as follows. In the first stage, the lower order
residual $\vR^{\ast, L}_e$ performs evolution from time $t^n$ to $t^{\nph}$
while, in the second stage, $\vR^L_e$ performs evolution from $t^n$ to $t^{n +
1}$. It may seem more natural to evolve from $t^{\nph}$ to $t^{n + 1}$ in the
next stage, but that will violate the conservation property because of the
expression of second stage of
MDRK~(\ref{eq:mdrk.mdrk.second.stage},~\ref{eq:mdrk.ho.method}).

\begin{figure}
\centering
\resizebox{0.7\columnwidth}{!}{\includegraphics{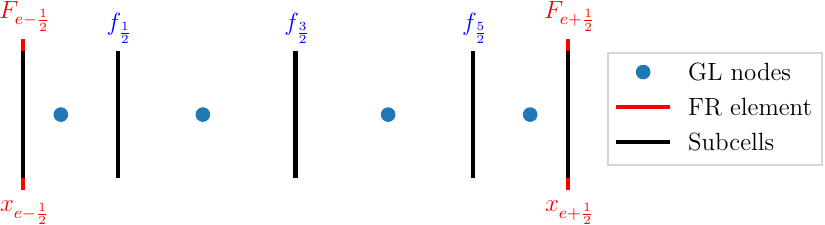}}
\caption{Subcells used by lower order scheme for degree $N = 3$. \label{fig:mdrk.subcells}}
\end{figure}

The MDRK scheme used in this work is fourth order accurate so we use
polynomial degree $N = 3$ in what follows, but write the procedure for general
$N$. Let us subdivide each element $\Omega_e$ into $N + 1$ subcells associated
to the solution points $\{ \xep, p = 0, 1, \ldots, N\}$ of the MDRK scheme.
Thus, we will have $N + 2$ subfaces denoted by $\{x^e_{\pph}, p = - 1, 0,
\ldots, N\}$ with $x^e_{- \half} = x_{\emh}$ and $x^e_{\Nph} = x_{\eph}$. For
maintaining a conservative scheme, the $p^{\text{th}}$ subcell is chosen so
that
\begin{equation}
\label{eq:mdrk.subcell.defn} x_{\pph}^e - x_{\pmh}^e = w_p \Delta x_e, \qquad 0
\le p \le N
\end{equation}
where $w_p$ is the $p^{\text{th}}$ quadrature weight associated with the
solution points. Figure~\ref{fig:mdrk.subcells} gives an illustration of the
subcells for degree $N = 3$ case. We explain the blending scheme for the
second stage, where the lower order scheme performs evolution from $t^n$ to
$t^{n + 1}$. The low order scheme is obtained by updating the solution in each
of the subcells by a finite volume scheme,
\begin{equation}
\begin{split}
\uez^{L, n + 1} & = \uez^n - \frac{\Delta t}{w_0 \Delta x_e}
[\pf_{\half}^e - \Ftwo_{\emh}]\\
\uep^{L, n + 1} & = \uep^n - \frac{\Delta t}{w_p \Delta x_e}
[\pf_{\pph}^e - \pf_{\pmh}^e], \qquad 1 \le p \le N - 1\\
\ueN^{L, n + 1} & = \ueN^n - \frac{\Delta t}{w_N \Delta x_e}  [\Ftwo_{\eph} -
\pf_{\Nmh}^e]
\end{split} \label{eq:mdrk.low.order.update}
\end{equation}
The inter-element fluxes $\Ftwo_{\eph}$ used in the low order scheme are same as
those used in the high order MDRK scheme in equation~\eqref{eq:mdrk.f2.defn}.
Usually, Rusanov's flux~{\cite{Rusanov1962}} will be used for the
inter-element fluxes and in the lower order scheme. The element mean value
obtained by the low order scheme satisfies
\begin{equation}
\label{eq:mdrk.low.order.cell.avg.update} \overline{\uu}_e^{L, n + 1} = \sum_{p =
0}^N \uep^{L, n + 1} w_p = \overline{\uu}_e^n - \frac{\Delta t}{\Delta x_e}
(\Ftwo_{\eph} - \Ftwo_{\emh})
\end{equation}
which is identical to the update equation by the MDRK scheme given in equation~\eqref{eq:mdrk.upmean} in the absence of source terms. The element mean in the blended scheme evolves according to
\begin{align*}
\overline{\uu}_e^{n + 1} & = (1 - \alpha_e)  (\overline{\uu}_e)^{H, n + 1}
+ \alpha_e  (\overline{\uu}_e)^{L, n + 1}\\
& = (1 - \alpha_e)  \left[ \overline{\uu}_e^n - \frac{\Delta t}{\Delta
x_e} (\Ftwo_{\eph} - \Ftwo_{\emh}) \right] + \alpha_e  \left[ \overline{\uu}_e^n -
\frac{\Delta t}{\Delta x_e} (\Ftwo_{\eph} - \Ftwo_{\emh}) \right]\\
& = \overline{\uu}_e^n - \frac{\Delta t}{\Delta x_e}  (\Ftwo_{\eph} -
\Ftwo_{\emh})
\end{align*}
and hence the blended scheme is also conservative. The similar arguments will
apply to the first stage and we will have by~\eqref{eq:mdrk.upmean}
\begin{equation}
\begin{split}
\avg{\uu}^{\ast}_e & = \overline{\uu}_e^n - \frac{\Delta t / 2}{\Delta x_e}
(\Fone_{\eph} - \Fone_{\emh})\\
\overline{\uu}_e^{n + 1} & = \overline{\uu}_e^n - \frac{\Delta
t}{\Delta x_e}  (\Ftwo_{\eph} - \Ftwo_{\emh})
\end{split}
\label{eq:blended.numfluxes}
\end{equation}
Overall, all three schemes, i.e., lower order, MDRK and the blended scheme,
predict the same mean value.

The inter-element fluxes $\Fone_{\eph}, \Ftwo_{\eph}$ are used both in the low
and high order schemes. To achieve high order accuracy in smooth regions, this
flux needs to be high order accurate, however it may produce spurious
oscillations near discontinuities when used in the low order scheme. A natural
choice to balance accuracy and oscillations is to take
\begin{equation}
\begin{split}
\Fone_{\eph} & = (1 - \alpha_{\eph})  \Fone_{\eph}^\text{HO} +
\alpha_{\eph}  \pf_{\eph}\\
\Ftwo_{\eph} & = (1 - \alpha_{\eph})  \FtwoHO_{\eph} +
\alpha_{\eph}  \pf_{\eph}
\end{split}
\label{eq:mdrk.Fblend}
\end{equation}
where $\Fone^\text{HO}_{\eph}, \FtwoHO_{\eph}$ are the high
order inter-element time-averaged numerical fluxes used in the MDRK
scheme~\eqref{eq:mdrk.nfdiss2} and $\pf_{\eph}$ is a lower order flux at the face
$x_{\eph}$ shared between FR elements and
subcells~(\ref{eq:mdrk.fo.at.face},~\ref{eq:mdrk.mh.at.face}). The blending coefficient
$\alpha_{\eph}$ will be based on a local smoothness indicator which will bias
the flux towards the lower order flux $\pf_{\eph}$ near regions of lower
solution smoothness. However, to enforce admissibility in means
(Definition~\ref{defn:mdrk.mean.pres}), the flux has to be further corrected, as
explained in Section~\ref{sec:mdrk.flux.limiter}.

\subsection{First order blending}\label{sec:mdrk.fo}

The lower order scheme is taken to be a first order finite volume scheme, for
which the subcell fluxes in~\eqref{eq:mdrk.low.order.update} are given by
\[ \pf_{\pph}^e = \pf (\uep, \uu_{e, p + 1}) \]
At the interfaces that are shared with FR elements, we define the lower order
flux used in computing inter-element flux as
\begin{equation}
\pf_{\eph} = \pf (\ueN, \uepoz) \label{eq:mdrk.fo.at.face}
\end{equation}
In this work, the numerical flux $\pf (\cdummy, \cdummy)$ is taken to be
Rusanov's flux~{\cite{Rusanov1962}}, which is the same flux used by the higher
order scheme at the element interfaces.

\subsection{Higher order blending}\label{sec:mdrk.mh}

The MUSCL-Hancock scheme uses a flux whose stencil is given by
\begin{equation}
\pf_{\eph} = \pf (\uu_{e, N - 1}, \ueN, \uepoz, \uu_{e + 1, 1})
\label{eq:mdrk.mh.at.face}
\end{equation}
Since the evolution in first stage is from $t^n$ to $t^{\nph}$ and in the
second from $t^n$ to $t^{n + 1}$, the procedure remains the same for both
stages. Thus, the details of the construction of an admissibility preserving
MUSCL-Hancock flux on the non-uniform non-cell centred subcells involved in
the blending scheme can be found in~{\cite{babbar2023admissibility}}.

\section{Admissibility preservation}

\correction{The Runge-Kutta FR schemes described in Section~\ref{sec:mdrk.rk} using a Strong Stability Preserving (SSP) time discretization satisfy the admissibility in means property (Definition~\ref{defn:mdrk.mean.pres})~\cite{Zhang2010b}. For any admissibility preserving in means FR scheme, the scaling limiter of~\cite{Zhang2010b} described in Section~\ref{sec:scaling} can be used to obtain an admissibility preserving scheme (Definition~\ref{defn:mdrk.adm.pres}). Since every stage of the multiderivative Runge-Kutta (MDRK) method uses a high order time-averaged flux~(\ref{eq:Fone},~\ref{eq:Ftwo}), the base MDRK scheme does not satisfy the admissibility preserving in means property. Thus, we propose to use the flux limiting procedure of~{\cite{babbar2023admissibility}} at every stage to choose the blended numerical fluxes~\eqref{eq:mdrk.Fblend} and ensure admissibility preservation in means for MDRK with blending scheme described in Section~\ref{sec:mdrk.blending}. The procedure for the MDRK scheme is briefly described in this section; for more details, the reader is referred to~{\cite{babbar2023admissibility}}.}

\subsection{Flux limiter}\label{sec:mdrk.flux.limiter}

The flux limiter to enforce admissibility will be applied in each stage so
that the evolution of each stage is admissibility preserving \correction{in means}. Thus, the
admissibility enforcing flux limiting procedure
of~{\cite{babbar2023admissibility}} applied to each stage of MDRK ensures
admissibility \correction{in means} of that stage and thus of the overall scheme.

The theoretical basis for flux limiting can be summarised in the following Theorem~\ref{thm:lwfr.admissibility}.
\begin{theorem}\label{thm:lwfr.admissibility}
Consider the MDRK scheme with blending~\eqref{eq:mdrk.blended.scheme} where low and high order schemes use the same numerical fluxes $\Fone_\eph, \Ftwo_\eph$~\eqref{eq:blended.numfluxes} at every element interface. Then the following can be said about admissibility preserving in means property (Definition~\ref{defn:mdrk.mean.pres}) of each stage of the MDRK scheme:
\begin{enumerate}
\item element means of both low and high order schemes are same and thus the blended scheme~\eqref{eq:mdrk.blended.scheme} is admissibility preserving in means if and only if the lower order scheme is admissibility preserving in means;
\item if the finite volume method using the lower order fluxes $\pf_\eph$~(\ref{eq:mdrk.fo.at.face},~\ref{eq:mdrk.mh.at.face}) as the interface flux is admissibility preserving, such as the first-order finite volume method or the MUSCL-Hancock scheme with CFL restrictions and slope correction from Theorem 3 of~\cite{babbar2023admissibility}, and the blended numerical fluxes $\Fone_\eph, \Ftwo_\eph$ are chosen to preserve the admissibility of lower-order updates at solution points adjacent to the interfaces, then the blending scheme~\eqref{eq:mdrk.blended.scheme} will preserve admissibility in means.
\end{enumerate}
\end{theorem}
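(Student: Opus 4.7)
The plan is to treat both stages of the MDRK scheme uniformly, since each has the same structural form of an update driven by a time-averaged flux, and in each stage the blended, high-order, and low-order schemes share a common inter-element numerical flux (either $\Fone_\eph$ or $\Ftwo_\eph$). I would therefore prove each claim for a single generic stage and then apply it to both stages in turn.

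For part (1), I would take the quadrature-weighted sum of the low-order update~\eqref{eq:mdrk.low.order.update} to recover~\eqref{eq:mdrk.low.order.cell.avg.update}, which coincides exactly with the high-order mean evolution already derived in~\eqref{eq:blended.numfluxes} (and originally in~\eqref{eq:mdrk.upmean}). Since the blended nodal update in~\eqref{eq:mdrk.blended.scheme} is a convex combination of the high- and low-order nodal updates with weights $1-\alpha_e$ and $\alpha_e$, the blended mean is the same convex combination of the high- and low-order means; as those two means coincide, the blended mean equals them as well. Admissibility in means of any one of the three schemes is therefore equivalent to that of the others, which gives the ``if and only if'' for blended versus low order.

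For part (2), my strategy is to prove that the lower-order scheme itself is admissibility preserving in means under the stated hypotheses, and then invoke part (1). The mean $\avg{\uu}_e^{L,n+1} = \sum_{p=0}^N w_p \uep^{L,n+1}$ is a convex combination with positive weights summing to one, and $\Uad$ is convex by~\eqref{eq:mdrk.uad.form}, so it suffices to show that each nodal update $\uep^{L,n+1}$ lies in $\Uad$. For the interior subcells $1\le p \le N-1$, the update~\eqref{eq:mdrk.low.order.update} uses only internal subcell fluxes $\pf_\pph^e$ and therefore coincides with a step of the assumed admissibility-preserving first-order finite-volume or MUSCL-Hancock subcell scheme; admissibility is immediate under the prescribed CFL and slope restrictions. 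For the boundary subcells $p=0$ and $p=N$, the update involves the inter-element flux $\Fone_\eph$ or $\Ftwo_\eph$, and the second hypothesis is precisely the requirement that the blended flux is chosen so that $\uez^{L,n+1}$ and $\ueN^{L,n+1}$ lie in $\Uad$. Hence every nodal low-order update is admissible, the element mean is admissible by convexity, and part (1) transfers admissibility in means to the blended MDRK scheme.

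The main obstacle is the boundary-subcell analysis in part (2). The interior subcells inherit admissibility for free from the standard FV/MUSCL-Hancock theory, but the boundary subcells couple the low-order update to the high-order blended interface flux, and the real work is to verify that the flux-limiting construction of~\cite{babbar2023admissibility} can be invoked consistently at each MDRK stage: first using $\vu^n$ alone to build an admissible $\Fone_\eph$ that keeps $\uez^{L,\ast}$ and $\ueN^{L,\ast}$ in $\Uad$, and then using $\vu^n$ together with $\uus$ to build $\Ftwo_\eph$ keeping $\uez^{L,n+1}$ and $\ueN^{L,n+1}$ in $\Uad$. Once this stagewise reduction is made explicit, the existence of an admissible blended flux follows from the construction already given in~\cite{babbar2023admissibility}, and the argument reduces to invoking that construction separately at each stage.
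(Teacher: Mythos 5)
Your proposal is correct and follows essentially the same route as the paper: part (1) rests on the identity of the low- and high-order element means (the quadrature-weighted sum of~\eqref{eq:mdrk.low.order.update} recovering~\eqref{eq:mdrk.low.order.cell.avg.update}), and part (2) rests on all nodal low-order updates lying in $\Uad$ --- interior subcells by the assumed FV/MUSCL-Hancock admissibility and boundary subcells by hypothesis on the blended flux --- so that the mean is admissible by convexity and positivity of the quadrature weights. The paper's own proof is just a terser statement of exactly this argument.
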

\begin{proof}
By~(\ref{eq:au.defn}, \ref{eq:mdrk.low.order.cell.avg.update}), element means are the same for both low and high order schemes. Thus, admissibility in means of one implies the same for other, proving the first claim. For the second claim, note that our assumptions imply $(u_j^e)^{L,n+1}$ given by~\eqref{eq:mdrk.low.order.update} is in $\Uad$ for $0\le j \le N$, implying admissibility in means property of the lower order scheme by~\eqref{eq:mdrk.low.order.cell.avg.update} and thus admissibility in means for the blended scheme.
\end{proof}

We now explain the procedure of ensuring that the update obtained by the lower order scheme in the second stage will be admissible. The application to the first stage is analogous.
The lower order scheme is computed with a first order finite volume method or MUSCL-Hancock with slope correction from Theorem 3 of~\cite{babbar2023admissibility} so that admissibility is already ensured for inner solution points; i.e., we already have
\[
(\bw^e_{j})^{L, n + 1} \in \Uad, \quad  1 \le j \le N-1
\]
The remaining admissibility constraints for the first ($j=0$) and last solution points ($j=N$) will be satisfied by appropriately choosing the inter-element flux $F_{\eph}$. The first step is to choose a candidate for $\Ftwo_\eph$ which is heuristically expected to give reasonable control on spurious oscillations, i.e.,
\[
\Ftwo_{\eph} = (1 - \alpha_{\eph}) \FtwoHO_{\eph} + \alpha_{\eph} \pf_{\eph}, \quad \alpha_{\eph} = \frac{\alpha_e + \alpha_{e+1}}{2}
\]
where $\pf_{\eph}$ is the lower order flux at the face $\eph$ shared between FR elements and subcells~(\ref{eq:mdrk.fo.at.face}, \ref{eq:mdrk.mh.at.face}), and $\alpha_e$ is the blending coefficient~\eqref{eq:mdrk.blended.scheme} based on element-wise smoothness indicator from~\cite{hennemann2021} which is also described in Section 3.3 of~\cite{babbar2023admissibility}.

The next step is to correct $\Ftwo_{\eph}$ to enforce the admissibility constraints. The guiding principle of our approach is to perform the correction within the face loops, minimizing storage requirements and additional memory reads. The lower order updates in subcells neighbouring the $\eph$ face with the candidate flux are
\begin{equation}
\label{eq:low.order.tilde.update}
\begin{split}
\tilde{\bw}_{0}^{n+1} & = (\bw^{e+1}_0)^n - \frac{\Delta t}{w_0 \Delta x_{e+1}} ( \pf^{e+1}_{\half} - \Ftwo_{\eph} ) \\
\tilde{\bw}_{N}^{n+1} &= (\bw^e_N)^n - \frac{\Delta t}{w_N \Delta x_{e}} (\Ftwo_{\eph} - \pf^e_{\Nmh})
\end{split}
\end{equation}
In order to correct the interface flux, we will again use the fact that first order finite volume method and MUSCL-Hancock with slope corrections from~\cite{babbar2023admissibility} preserve admissibility, i.e.,
\begin{align*}
\utilow_0 & = (\bw^{e+1}_{0})^n - \frac{\Delta t}{w_0 \Delta x_{e+1}} ( \pf^{e+1}_{\half} - \pf_{\eph} ) \in \Uad \\
\utilow_N &= (\bw^e_{N})^n - \frac{\Delta t}{w_N \Delta x_{e}} (\pf_{\eph} - \pf^e_{\Nmh}) \in \Uad
\end{align*}
Let $\{p_k, 1 \le 1 \le K\}$ be the admissibility constraints~\eqref{eq:mdrk.uad.form} of the conservation law. The numerical flux is corrected by iterating over the admissibility constraints as follows

\begin{minipage}{\textwidth}  % To keep on same page
\vspace{8pt}
\hrule
\vspace{1.5pt}
\begin{algorithmic}
\State $\Ftwo_{\eph} \gets (1 - \alpha_{\eph}) \FtwoHO_{\eph} + \alpha_{\eph} \pf_{\eph}$
\For{$k$=1:$K$}
\State $\epsilon_0, \epsilon_N \gets \frac{1}{10}p_k(\utilow_0), \frac{1}{10}p_k(\utilow_N)$
\State $\theta \gets \min \left(\min_{j=0,N} \left| \frac{\epsilon_j - p_k(\tilde{\bw}_j^{\text{low},n+1})}{p_k(\tilde{\bw}_j^{n+1}) - p_k(\tilde{\bw}_j^{\text{low},n+1})} \right |, 1 \right)$
\State $\Ftwo_{\eph} \gets \theta \Ftwo_{\eph} + (1-\theta) \pf_{\eph}$
\State $\tilde{\bw}_{0}^{n+1}  \gets (\bw^{e+1}_0)^n - \frac{\Delta t}{w_0 \Delta x_{e+1}} ( \pf^{e+1}_{\half} - \Ftwo_\eph )$
\State $\tilde{\bw}_{N}^{n+1} \gets (\bw^e_N)^n - \frac{\Delta t}{w_N \Delta x_{e}} (\Ftwo_\eph - \pf^e_{\Nmh})$
\EndFor
\end{algorithmic}
\hrule
\vspace{5pt}
\end{minipage}
By concavity of $p_k$, after the $k^\text{th}$ iteration, the updates computed using flux $\Ftwo_\eph$ will satisfy
\begin{equation}
p_k(\tilde{\bw}_j^{n+1}) =  p_k(\theta (\tilde{\bw}_j^{n+1})^\text{prev} + (1-\theta)\utilow_j) \ge \theta p_k( (\tilde{\bw}_j^{n+1})^\text{prev}) + (1 - \theta)p_k(\utilow_j) \ge \epsilon_j, \qquad j=0,N \label{eq:positive.pk}
\end{equation}
satisfying the $k^\text{th}$ admissibility constraint;  here $(\tilde{\bw}_j^{n+1})^\text{prev}$ denotes $\tilde{\bw}_j^{n+1}$ before the $k^\text{th}$ correction and the choice of $\epsilon_j = \frac{1}{10}p_k(\utilow_j)$ is made following~\cite{ramirez2021}. After the $K$ iterations, all admissibility constraints will be satisfied and the resulting flux $\Ftwo_{\eph}$ will be used as the interface flux keeping the lower order updates and thus the element means admissible. Thus, by Theorem~\ref{thm:lwfr.admissibility}, the choice of blended numerical flux gives us admissibility preservation in means. The above procedure is for 1-D conservation laws; the extension to 2-D is performed by breaking the update into convex combinations of 1-D updates and adding additional time step restrictions; the process is similar to Appendix B of~\cite{babbar2023admissibility}.

\subsection{Scaling limiter} \label{sec:scaling}

The flux limiter described in the previous section gives us an admissibility preserving in means MDRK scheme. In this section, we review the scaling limiter of~{\cite{Zhang2010b}} to use the
admissibility in means property (Definition~\ref{defn:mdrk.mean.pres}) to obtain an admissibility preserving scheme (Definition~\ref{defn:mdrk.adm.pres}). Consider the solution $\tmmathbf{u}_h^n$ at the current time time level $n$. Within each element, $\tmmathbf{u}_h^n \in \poly_N$ and since the scheme is admissibility preserving in means, we assume $\au^n_e \in \Uad$ for each
element $e$. We will iteratively correct all admissibility constraints
$\left\{ \ad_k \right\}_{k = 1}^K$~\eqref{eq:mdrk.uad.form}. For each constraint
$\ad_k$, we find $\theta_k \in [0, 1]$ such that $\ad_k \left( (1 - \theta_k)
\au^n_e + \theta_k  \tmmathbf{u}_h^n \right) > 0$ at the $N + 1$ solution
points and replace the polynomial $\tmmathbf{u}_h^n$ with $(1 - \theta)
\au^n_e + \theta \tmmathbf{u}_h^n$ where $\theta = \min_k \theta_k$. This correction procedure is performed for all admissibility constraints as follows.
\begin{minipage}{\textwidth}  % To keep on same page
\vspace{8pt}
\hrule
\vspace{1.5pt}
\begin{algorithmic}
\State $\theta = 1$
\For{$k$=1:$K$}
\State $\epsilon_k \gets \frac{1}{10}p_k(\au_e^n)$
\State $\theta_k = \min \left( \min_{0 \leq j \leq N} \left| \frac{\epsilon_p - \ad_k
(\au^n_e)}{\ad_k (\tmmathbf{u}_{e, j}^n) - \ad_k (\au^n_e)} \right|, 1
\right)$
\State $\theta \gets \min (\theta_k, \theta)$
\EndFor
\end{algorithmic}
\hrule
\vspace{5pt}
\end{minipage}
By the same arguments as~\eqref{eq:positive.pk}, the procedure will ensure $\uu_{e,j}^n \in \Uad$ for all $e,j$ by the concavity of the admissibility constraints $\{p_k\}$~\eqref{eq:mdrk.uad.form}. Thus, by Definition~\ref{defn:mdrk.mean.pres}, the element means at the next time level will also be admissible. Overall, by performing this procedure at every time level, we have an admissibility preserving scheme in the sense of Definition~\ref{defn:mdrk.adm.pres}.

\section{Numerical results}\label{sec:mdrk.num}

In this section, we test the MDRK scheme with numerical experiments using
polynomial degree $N = 3$ in all results. Most of the test cases
from~{\cite{babbar2022,babbar2023admissibility}} were tried and were seen to
validate our claims; but we only show the important results here. We also
tested all the benchmark problems for higher order methods
in~{\cite{Pan2016}}, and show some of the results from there. \correction{The developments have been contributed to the Julia package \texttt{Tenkai.jl}~\cite{tenkai} which has been used for all the numerical experiments.}

\subsection{Scalar equations}

We perform convergence tests with scalar equations. The MDRK scheme with D1
and D2 dissipation are tested using the optimal CFL numbers from
Table~\ref{tab:mdrk.cfl}. We make comparison with RKFR scheme with polynomial
degree $N = 3$ described in Section~\ref{sec:mdrk.rk} using the SSPRK scheme
from~{\cite{Spiteri2002}}. The CFL number for the fourth order RK scheme is
taken from~{\cite{Gassner2011a}}. In many problems, we compare with
Gauss-Legendre (GL) solution points and Radau correction functions, and
Gauss-Legendre-Lobatto (GLL) solution points with $g_2$ correction functions.
These combinations are important because they are both variants of
Discontinuous Galerkin methods~{\cite{Huynh2007,Grazia2014}}.

\subsubsection{Linear advection equation}\label{sec:mdrk.cla}

The initial condition $u (x, 0) = \sin (2 \pi x)$ is taken with periodic
boundaries on $[0, 1]$. The error norms are computed at time $t = 2$ units and
shown in Figure~\ref{fig:mdrk.cla1}. The MDRK scheme with D2 dissipation (MDRK-D2)
scheme shows optimal order of convergence and has errors close to that MDRK-D1
and the RK scheme for all the combinations of solution points and correction
functions.

\begin{figure}
\centering
\begin{tabular}{cc}
\resizebox{0.46\columnwidth}{!}{\includegraphics{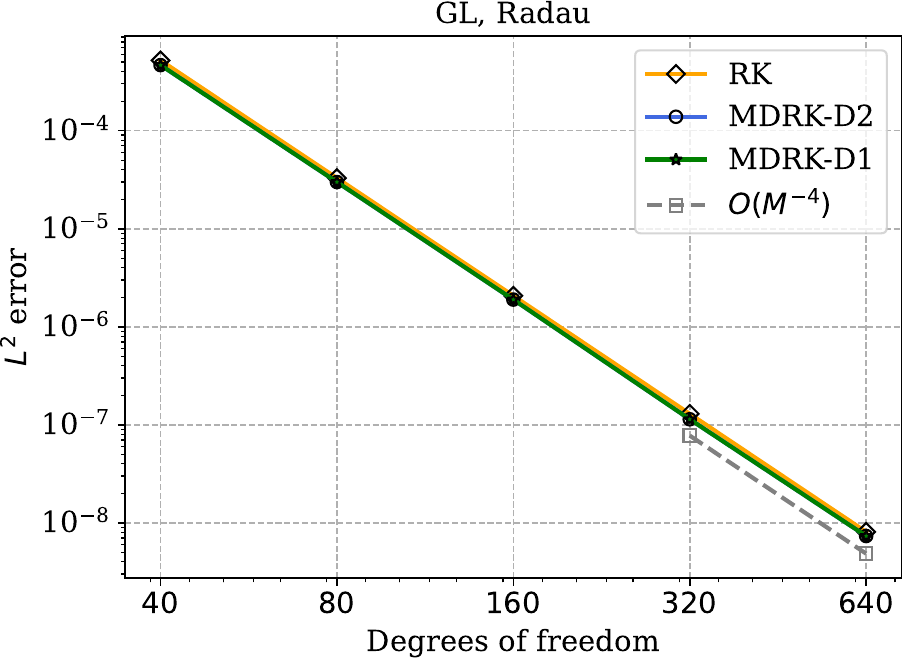}}
&
\resizebox{0.46\columnwidth}{!}{\includegraphics{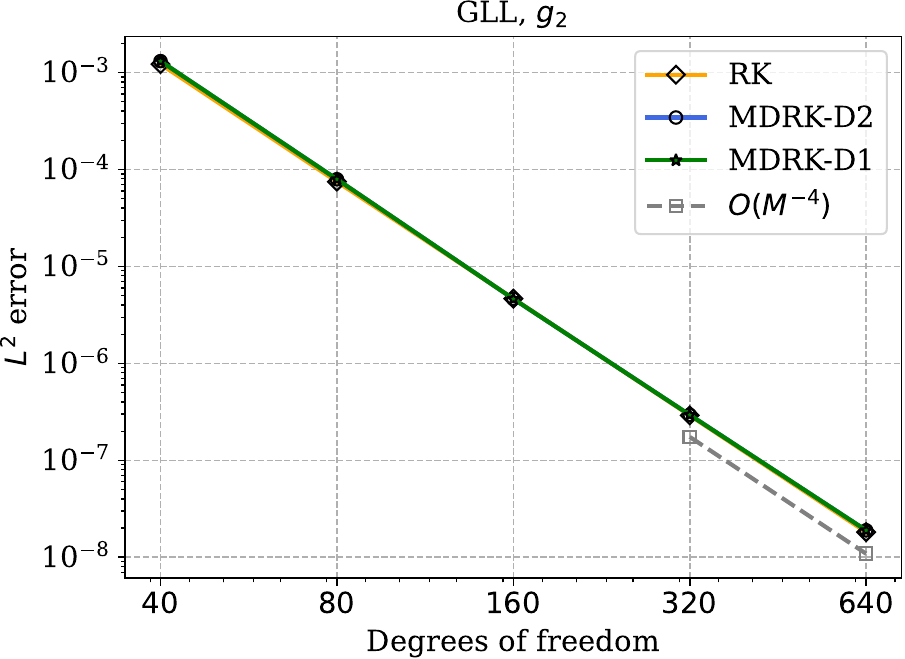}}\\
(a) & (b)
\end{tabular}
\caption{Error convergence for constant linear advection equation comparing
MDRK and RK - (a) GL points with Radau correction, (b) GLL points with $g_2$
correction\label{fig:mdrk.cla1}}
\end{figure}

\subsubsection{Variable advection equation}

\[ u_t + f (x, u)_x = 0, \qquad f (x, u) = a (x) u \]
The velocity is $a (x) = x^2$, $u_0 (x) = \cos (\pi x / 2)$, $x \in [0.1, 1]$
and the exact solution is $u (x, t) = u_0 (x / (1 + tx)) / (1 + tx)^2$.
Dirichlet boundary conditions are imposed on the left boundary and outflow
boundary conditions on the right. This problem is non-linear in the spatial
variable, i.e., if $I_h$ is the interpolation operator, $I_h (au_h) \neq I_h
(a) I_h (u_h)$. Thus, the $\extrapolate$ and {\evaluate} schemes are expected
to show different behavior, unlike the previous test.

The grid convergence analysis is shown in Figure~\ref{fig:mdrk.vla2}. In
Figure~\ref{fig:mdrk.vla2}a, the scheme with {\extrapolate} shows larger errors
compared to the RK scheme though the convergence rate is optimal. The MDRK
scheme with {\evaluate} shown in the middle figure, is as accurate as the RK
scheme. The last figure compares {\extrapolate} and {\evaluate} schemes using
GL solution points, Radau correction function and D2 dissipation; we clearly
see that {\evaluate} scheme has smaller errors than {\extrapolate} scheme.

\begin{figure}
{\noindent}\begin{tabular}{ccc}
\resizebox{0.3\columnwidth}{!}{\includegraphics{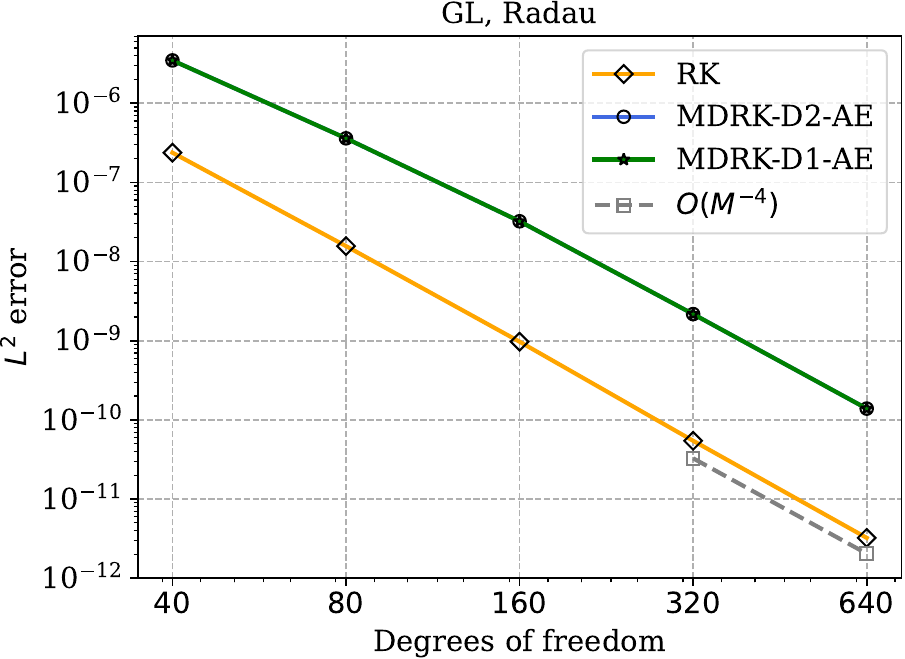}}
&
\resizebox{0.3\columnwidth}{!}{\includegraphics{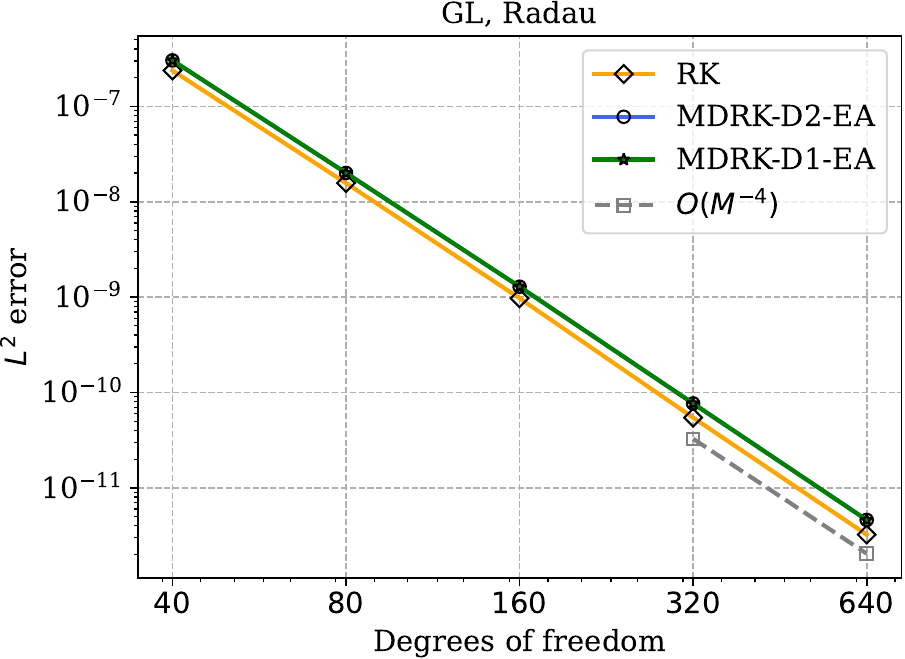}}
&
\resizebox{0.3\columnwidth}{!}{\includegraphics{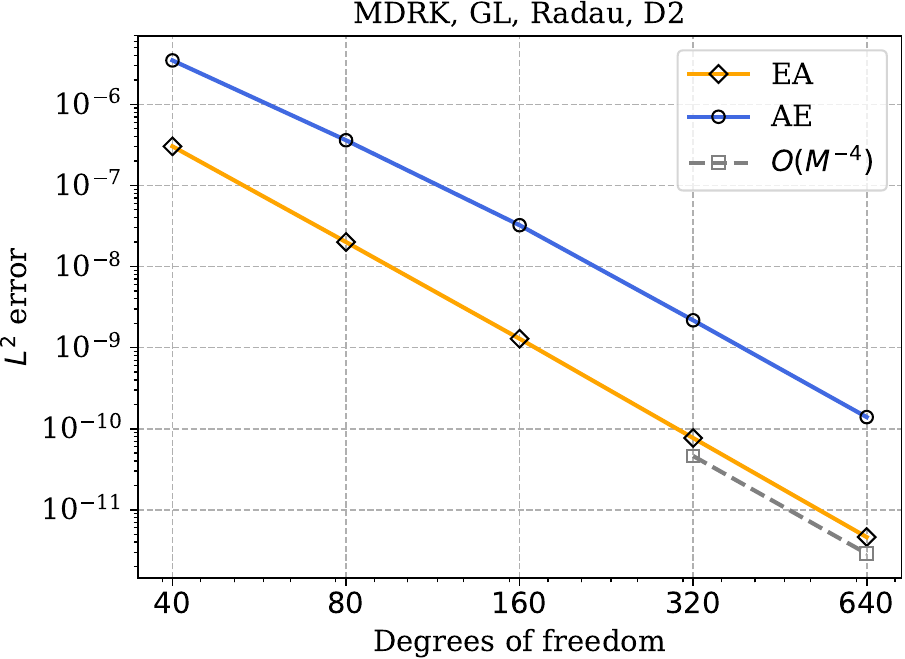}}\\
(a) & (b) & (c)
\end{tabular}
\caption{Error convergence for variable linear advection equation with $a
(x) = x^2$; (a) {\extrapolate} scheme, (b) {\evaluate} scheme, (c)
{\extrapolate} versus {\evaluate}\label{fig:mdrk.vla2}}
\end{figure}

\subsubsection{Burgers' equations}

The one dimensional Burger's equation is a conservation law of the form $u_t +
f (u)_x = 0$ with the quadratic flux $f (u) = u^2 / 2$. For the smooth initial
condition $u (x, 0) = 0.2 \sin (x)$ with periodic boundary condition in the
domain $[0, 2 \pi]$, we compute the numerical solution at time $t = 2.0$ when
the solution is still smooth. Figure~\ref{fig:mdrk.burg1}a compares the error norms
for the {\extrapolate} and {\evaluate} methods for the Rusanov numerical flux,
and using GL solution points, Radau correction and D2 dissipation. The
convergence rate of {\extrapolate} is less than optimal and close to $O (h^{3
+ 1 / 2})$. In Figure~\ref{fig:mdrk.burg1}b, we see that no scheme shows optimal
convergence rates when $g_2$ correction + GLL points is used. The comparison
between D1, D2 dissipation is made in Figure~\ref{fig:mdrk.burg2} and their
performances are found to be similar.

\begin{figure}
\centering
{\noindent}\begin{tabular}{cc}
\resizebox{0.47\columnwidth}{!}{\includegraphics{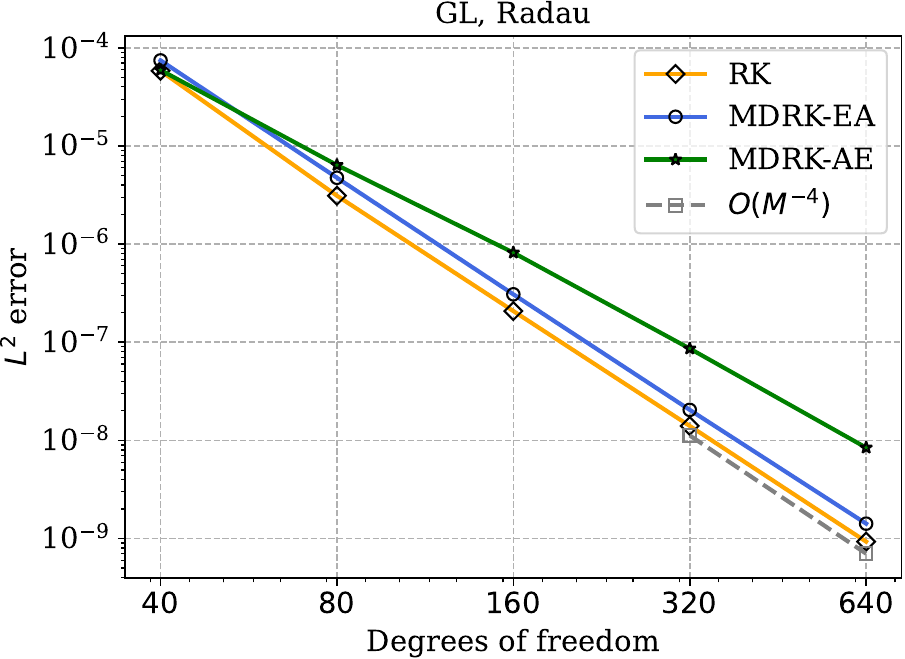}}
&
\resizebox{0.47\columnwidth}{!}{\includegraphics{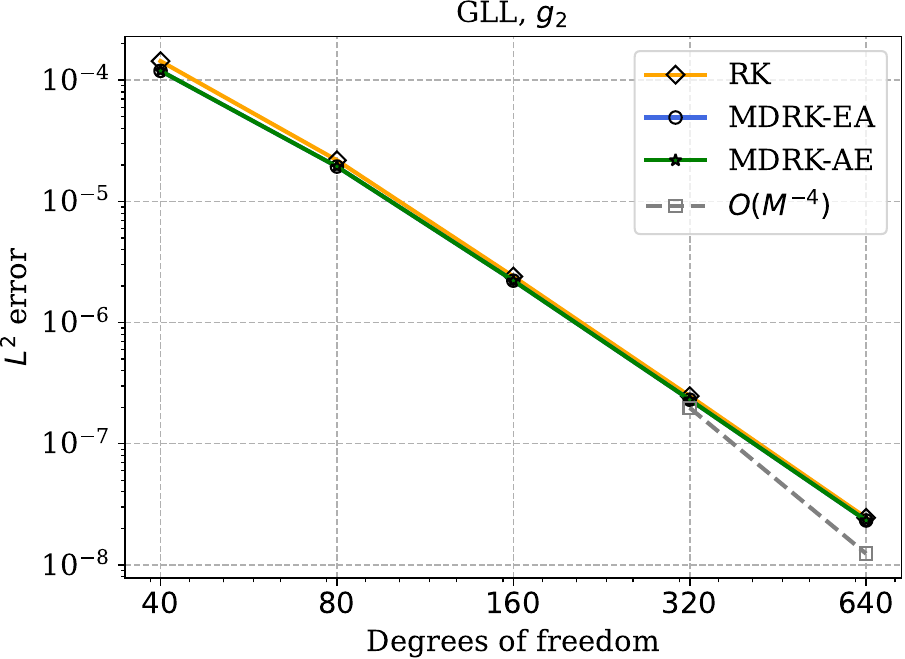}}\\
(a) & (b)
\end{tabular}
\caption{Comparing {\extrapolate} and {\evaluate} schemes using D2
dissipation for 1-D Burgers' equation at $t = 2$. (a) GL points with Radau
correction, (b) GLL points with $g_2$ correction \label{fig:mdrk.burg1}}
\end{figure}

\begin{figure}
\centering
{\noindent}\begin{tabular}{cc}
\resizebox{0.46\columnwidth}{!}{\includegraphics{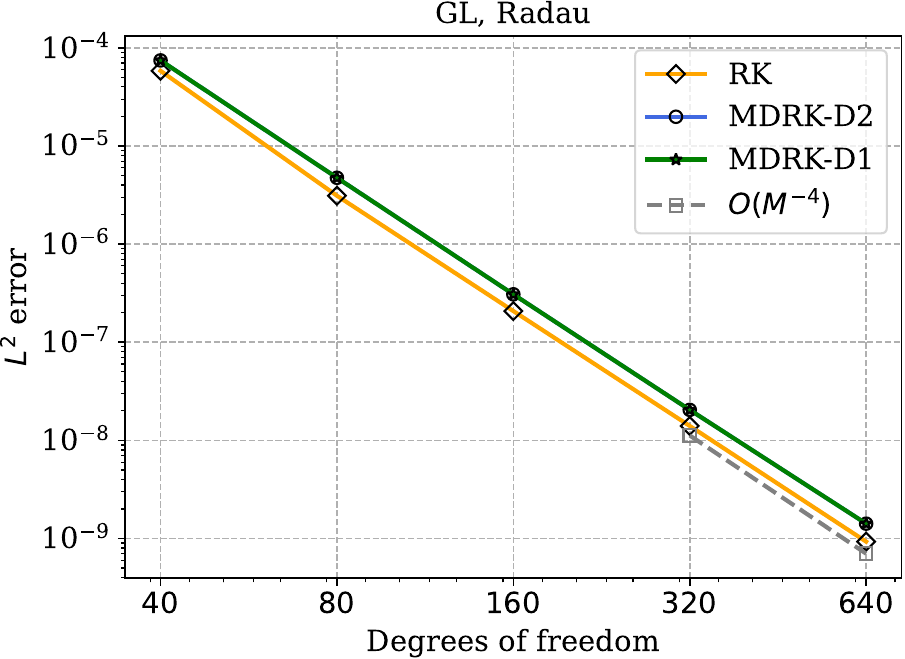}}
&
\resizebox{0.46\columnwidth}{!}{\includegraphics{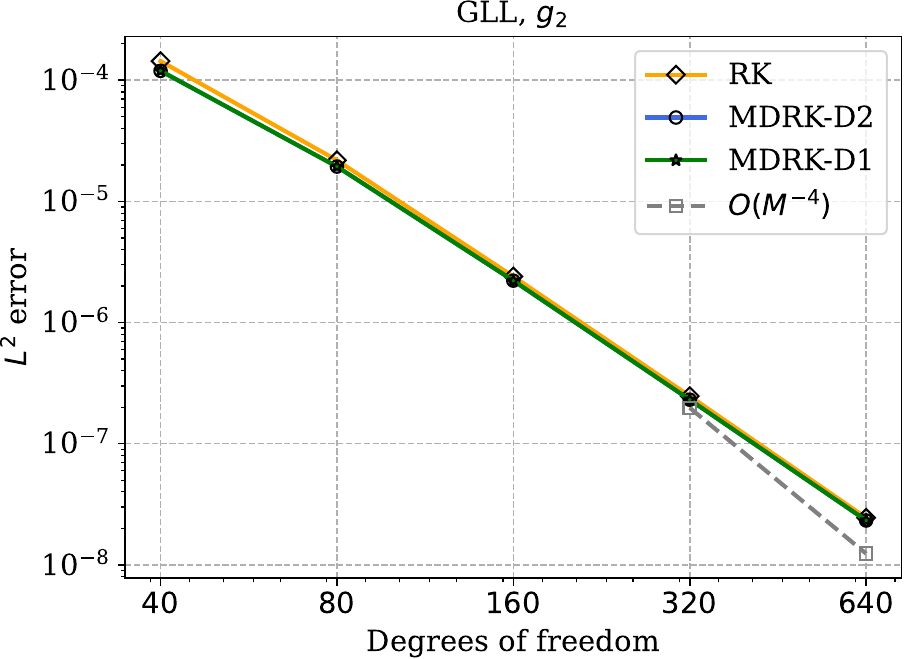}}\\
(a) & (b)
\end{tabular}
\caption{Comparing D1 and D2 dissipation for 1-D Burgers' equation at $t =
2$. (a) GL points with Radau correction, (b) GLL points with $g_2$
correction\label{fig:mdrk.burg2}}
\end{figure}

\subsection{1-D Euler equations}\label{sec:mdrk.res1dsys.chblend}

As an example of system of non-linear hyperbolic equations, we consider the
one-dimensional Euler equations of gas dynamics given by
\begin{equation}
\label{eq:mdrk.1deuler} \pd{}{t}  \left(\begin{array}{c}
\rho\\
\rho u\\
E
\end{array}\right) + \pd{}{x}  \left(\begin{array}{c}
\rho u\\
p + \rho u^2\\
(E + p) u
\end{array}\right) = \bzero
\end{equation}
where $\rho, u, p$ and $E$ denote the density, velocity, pressure and total
energy of the gas, respectively. For a polytropic gas, an equation of state $E
= E (\rho, u, p)$ which leads to a closed system is given by
\begin{equation}
\label{eq:mdrk.state} E = E (\rho, u, p) = \frac{p}{\gamma - 1} + \frac{1}{2}
\rho u^2
\end{equation}
where $\gamma > 1$ is the adiabatic constant, that will be taken as $1.4$
which is the value for air. In the following results, wherever it is not
mentioned, we use the Rusanov flux. We use Radau correction function with
Gauss-Legendre solution points in this section because of their superiority
seen in~{\cite{babbar2022,babbar2023admissibility}}. The time step size is
taken to be
\begin{equation}
\Delta t = C_s \min_e \left( \frac{\Delta x_e}{| \bar{v}_e | + \bar{c}_e}
\right)  \text{CFL} \label{eq:mdrk.dt.lw}
\end{equation}
where $e$ is the element index, $\bar{v}_e, \bar{c}_e$ are velocity and sound
speed of element mean in element $e$, CFL=0.107 (Table~\ref{tab:mdrk.cfl}). The
coefficient $C_s = 0.98$ is used in tests, unless specified otherwise.

\subsubsection{Blast wave}

The Euler equations~\eqref{eq:mdrk.1deuler} are solved with the initial condition
\[
(\rho, v, p) = \begin{cases}
(1, 0, 1000), & \text{if } x < 0.1\\
(1, 0, 0.01), & \text{if } 0.1 < x < 0.9\\
(1, 0, 100), & \text{if } x > 0.9
\end{cases}
\]
in the domain $[0, 1]$. This test was originally introduced
in~{\cite{Woodward1984}} to check the capability of the numerical scheme to
accurately capture the shock-shock interaction scenario. The boundaries are
set as solid walls by imposing the reflecting boundary conditions at $x = 0$
and $x = 1$. The solution consists of reflection of shocks and expansion waves
off the boundary wall and several wave interactions inside the domain. The
numerical solutions give negative pressure if the positivity correction is not
applied. With a grid of 400 cells using polynomial degree $N = 3$, we run the
simulation till the time $t = 0.038$ where a high-density peak profile is
produced. As tested in~{\cite{babbar2023admissibility}}, we compare first
order (FO) and MUSCL-Hancock (MH) blending schemes, and TVB limiter with
parameter $M = 300$~{\cite{Qiu2005b}} (TVB-300). We compare the performance of
limiters in Figure~\ref{fig:mdrk.blast} where the approximated density and
pressure profiles are compared with a reference solution computed using a very
fine mesh. Looking at the peak amplitude and contact discontinuity, it is
clear that the MUSCL-Hancock blending scheme gives the best resolution, especially
when compared with the TVB limiter.

\begin{figure}
\centering
{\noindent}\begin{tabular}{cc}
\resizebox{0.48\columnwidth}{!}{\includegraphics{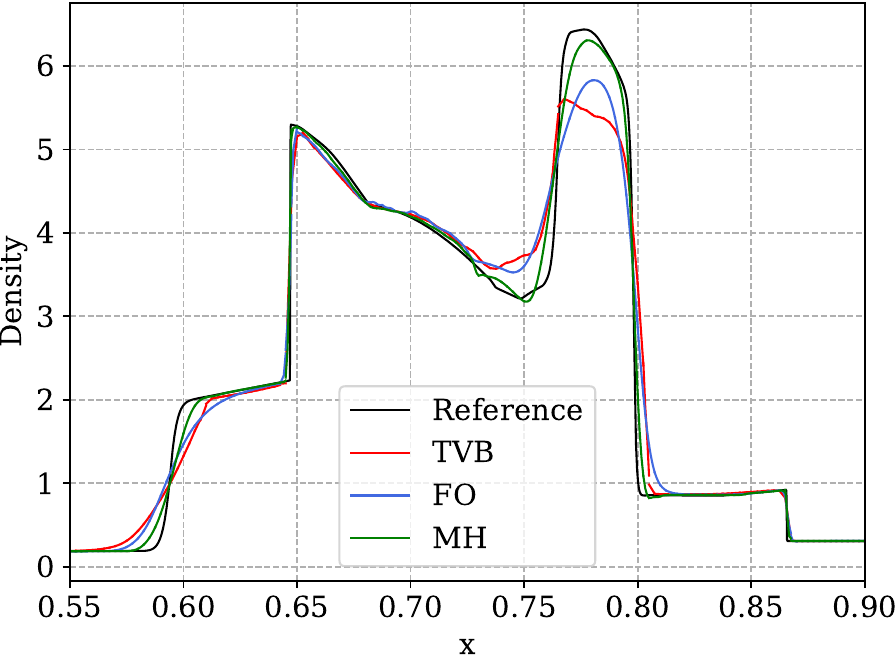}}
& \resizebox{0.48\columnwidth}{!}{\includegraphics{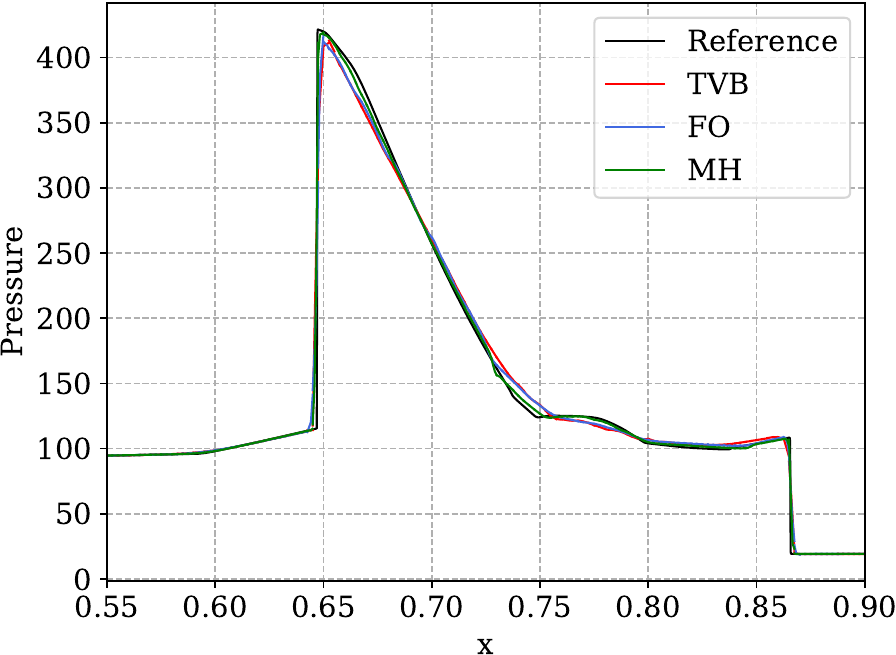}}\\
(a) & (b)
\end{tabular}
\caption{Blast wave problem using first order (FO) and MUSCL-Hancock
blending schemes, and TVB limited scheme (TVB-300) with parameter $M = 300$.
(a) Density, (b) Pressure profiles are shown at $t = 0.038$ on a mesh of 400
cells.\label{fig:mdrk.blast}}
\end{figure}

\subsubsection{Titarev Toro}

This is an extension of the Shu-Osher problem given by Titarev and
Toro~{\cite{Titarev2004}} and the initial data comprises of a severely
oscillatory wave interacting with a shock. This problem tests the ability of a
high-order numerical scheme to capture the extremely high frequency waves. The
smooth density profile passes through the shock and appears on the other side,
and its accurate computation is challenging due to numerical dissipation. Due
to presence of both spurious oscillations and smooth extrema, this becomes a
good test for testing robustness and accuracy of limiters. We discretize the
spatial domain with 800 cells using polynomial degree $N = 3$ and compare
blending schemes. As expected, the MUSCL-Hancock (MH) blending scheme is
superior to the First Order (FO) blending scheme and has nearly resolved in
the smooth extrema.

The initial condition is given by
\[ (\rho, v, p) = \begin{cases}
(1.515695, 0.523346, 1.805), \qquad & - 5 \leq x \leq - 4.5\\
(1 + 0.1 \sin (20 \pi x), 0, 1),  & - 4.5 < x \leq 5
\end{cases} \]
The physical domain is $[- 5, 5]$ and a grid of 800 elements is used. The
density profile at $t = 5$ is shown in Figure~\ref{fig:mdrk.titarev.toro}.

\begin{figure}
\centering
{\noindent}\begin{tabular}{cc}
\resizebox{0.485\columnwidth}{!}{\includegraphics{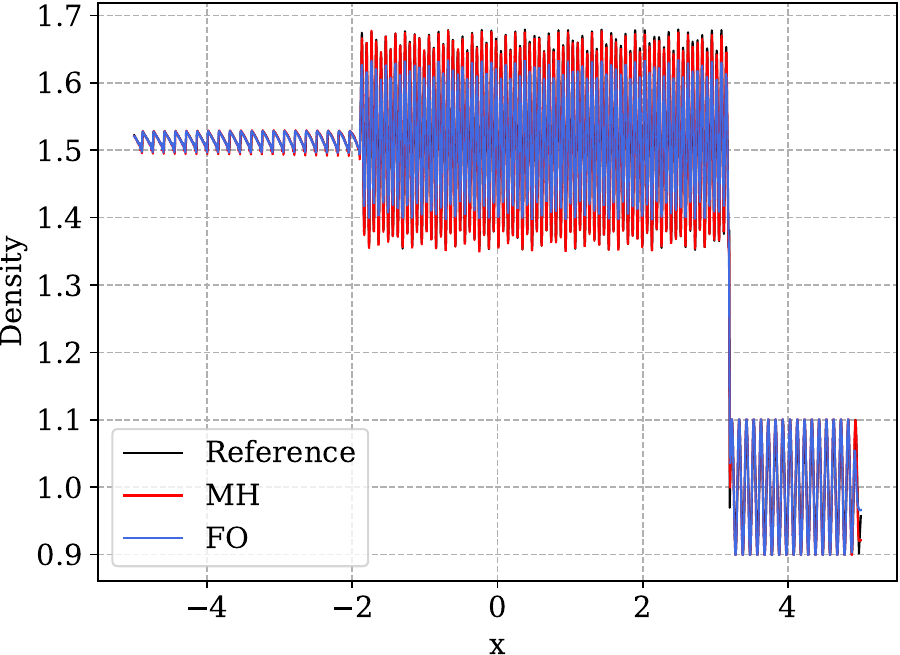}}
&
\resizebox{0.485\columnwidth}{!}{\includegraphics{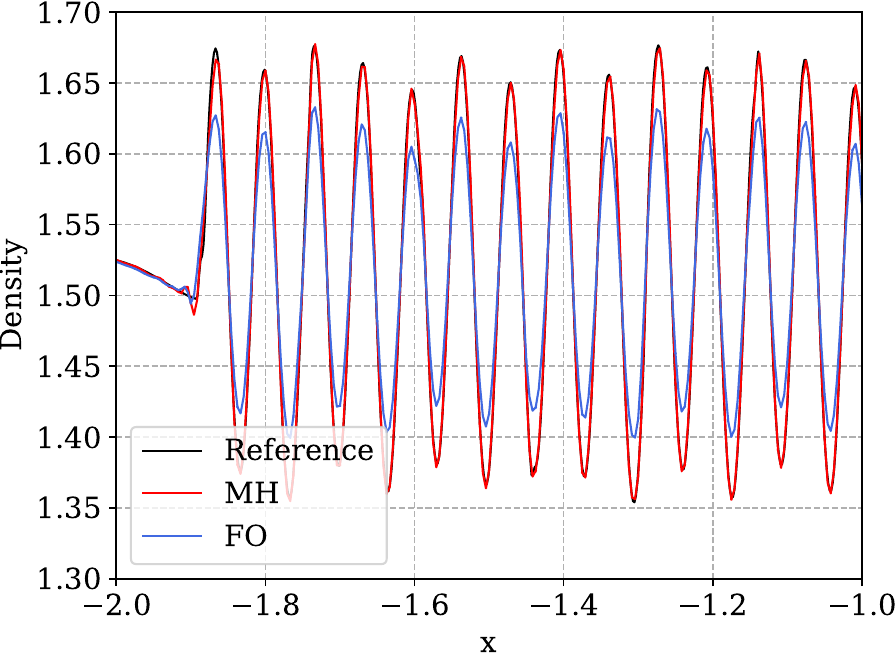}}\\
(a) & (b)
\end{tabular}
\caption{Titarev-Toro problem, comparing First Order (FO) and MUSCL-Hancock
(MH) blending (a) Complete plot, (b) Profile zoomed near smooth extrema on a
mesh of 800 cells.\label{fig:mdrk.titarev.toro}}
\end{figure}

\subsubsection{Large density ratio Riemann problem}

The second example is the large density ratio problem with a very strong
rarefaction wave~{\cite{Tang2006}}. The initial condition is given by
\[ (\rho, v, p) = \begin{cases}
(1000, 0, 1000), \quad & x < 0.3\\
(1, 0, 1),  & 0.3 < x
\end{cases} \]
The computational domain is $[0, 1]$ and transmissive boundary condition is
used at both ends. The density and pressure profile on a mesh of $500$
elements at $t = 0.15$ is shown in Figure~\ref{fig:mdrk.high.density}. The MH
blending scheme is giving better accuracy even in this tough problem.

\begin{figure}
\centering
{\noindent}\begin{tabular}{cc}
\resizebox{0.45\columnwidth}{!}{\includegraphics{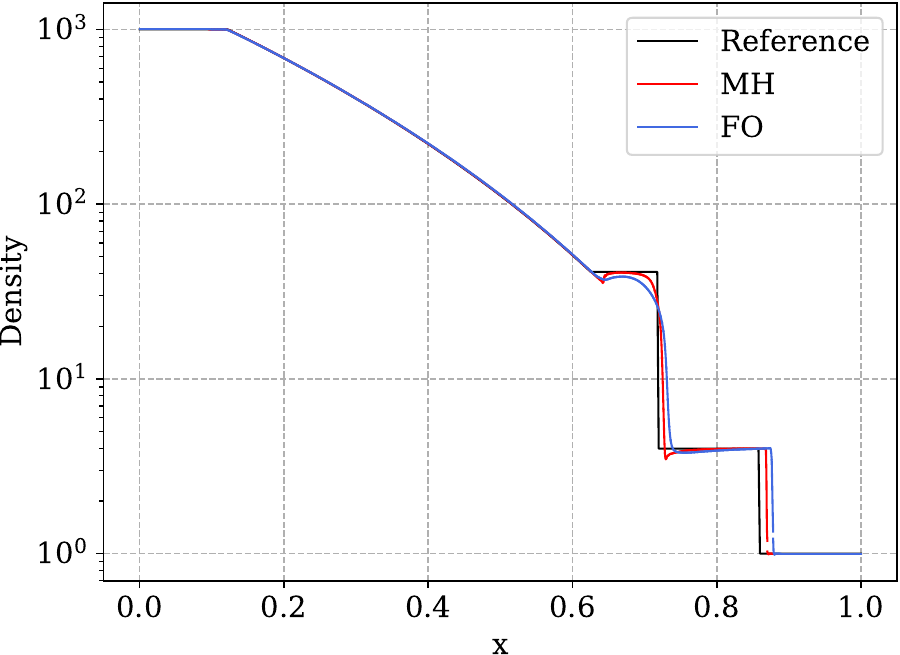}}
&
\resizebox{0.45\columnwidth}{!}{\includegraphics{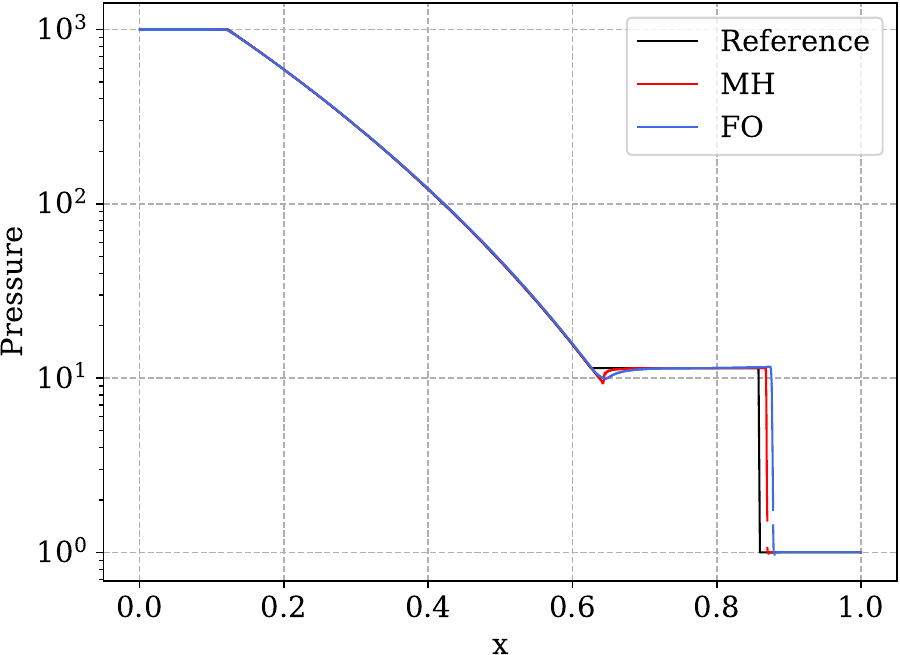}}\\
(a) & (b)
\end{tabular}
\caption{High density problem at $t = 0.15$ on a mesh of 500 elements (a)
Density plot, (b) Pressure plot\label{fig:mdrk.high.density}}
\end{figure}

\subsubsection{Sedov's blast}

To demonstrate the admissibility preserving property of our scheme, we
simulate Sedov's blast wave~{\cite{sedov1959}}; the test describes the
explosion of a point-like source of energy in a gas. The explosion generates a
spherical shock wave that propagates outwards, compressing the gas and
reaching extreme temperatures and pressures. The problem can be formulated in
one dimension as a special case, where the explosion occurs at $x = 0$ and the
gas is confined to the interval $[- 1, 1]$ by solid walls. For the simulation,
on a grid of 201 cells with solid wall boundary conditions, we use the
following initial data~{\cite{zhang2012}},
\[ \rho = 1, \qquad v = 0, \qquad E (x) = \left\{\begin{array}{ll}
\frac{3.2 \times 10^6}{\Delta x}, \qquad & |x| \le \frac{\Delta x}{2}\\
10^{- 12}, \qquad & \text{otherwise}
\end{array}\right. \]
where $\Delta x$ is the element width. This is a difficult test for positivity
preservation because of the high pressure ratios. Nonphysical solutions are
obtained if the proposed admissibility preservation corrections are not
applied. The density and pressure profiles at $t = 0.001$ are obtained using
blending schemes are shown in Figure~\ref{fig:mdrk.sedov.blast}.
In~{\cite{babbar2023admissibility}}, TVB limiter could not be used in this
test as the proof of admissibility preservation depended on the blending
scheme. Here, by using the generalized admissibility preserving scheme
of~{\cite{babbar2024generalized}} to be able to use the TVB limiter. However,
we TVB limiter being less accurate and unable to control the oscillations..

\begin{figure}
\centering
{\noindent}\begin{tabular}{cc}
\resizebox{0.485\columnwidth}{!}{\includegraphics{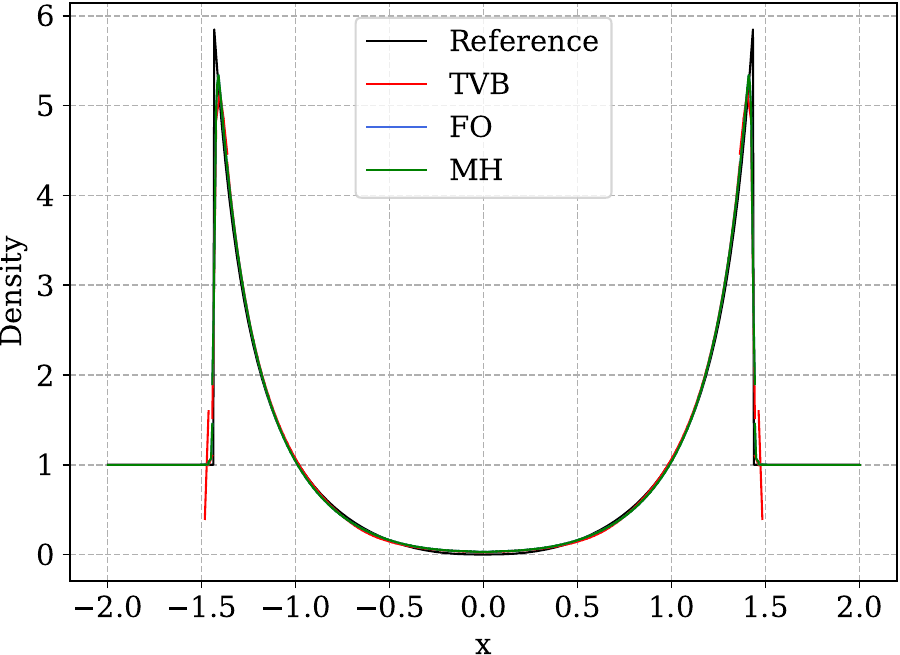}}
& \resizebox{0.485\columnwidth}{!}{\includegraphics{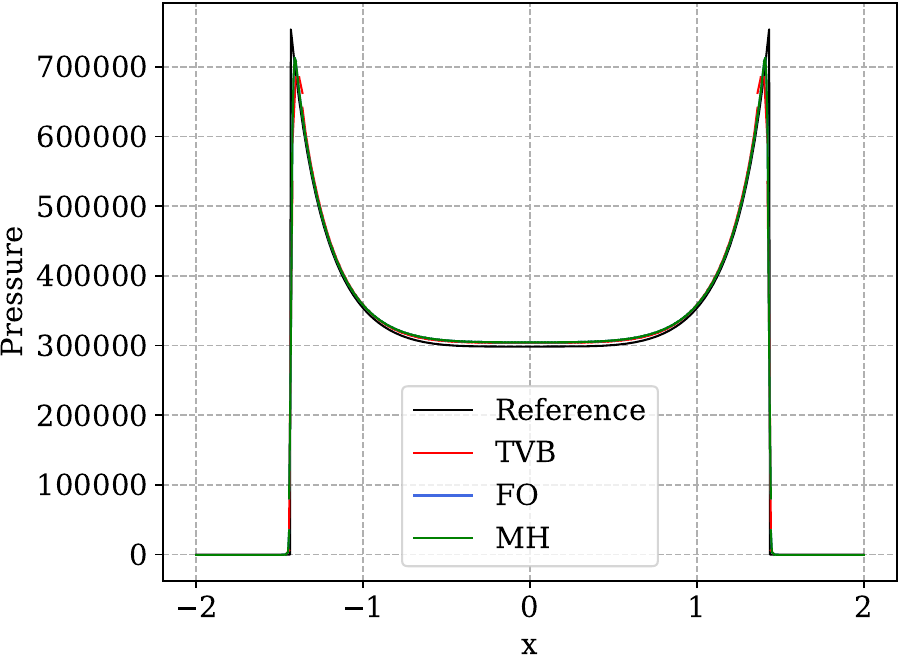}}\\
(a) & (b)
\end{tabular}
\caption{Sedov's blast wave problem, numerical solution using first order
(FO) and MUSCL-Hancock blending schemes, and TVD (a) Density, (b) Pressure
profiles are shown at $t = 0.001$ on a mesh of 201
cells\label{fig:mdrk.sedov.blast}}
\end{figure}

\subsection{2-D Euler's equations} \label{sec:2d.euler}

We consider the two-dimensional Euler equations of gas dynamics given by
\begin{equation}
\label{eq:mdrk.2deuler} \pd{}{t}  \left(\begin{array}{c}
\rho\\
\rho u\\
\rho v\\
E
\end{array}\right) + \pd{}{x}  \left( \begin{array}{c}
\rho u\\
p + \rho u^2\\
\rho uv\\
(E + p) u
\end{array} \right) + \pd{}{y}  \left( \begin{array}{c}
\rho v\\
\rho uv\\
p + \rho v^2\\
(E + p) v
\end{array} \right) = \bzero
\end{equation}
where $\rho, p$ and $E$ denote the density, pressure and total energy of the
gas, respectively and $(u, v)$ are Cartesian components of the fluid velocity.
For a polytropic gas, an equation of state $E = E (\rho, u, v, p)$ which leads
to a closed system is given by
\begin{equation}
\label{eq:mdrk.2dstate} E = E (\rho, u, v, p) = \frac{p}{\gamma - 1} +
\frac{1}{2} \rho (u^2 + v^2)
\end{equation}
where $\gamma > 1$ is the adiabatic constant, that will be taken as $1.4$ in
the numerical tests, which is the typical value for air. The time step size
for polynomial degree $N$ is computed as
\begin{equation}
\Delta t = C_s \min_e \left( \frac{| \bar{u}_e | + \bar{c}_e}{\Delta x_e} +
\frac{| \bar{v}_e | + \bar{c}_e}{\Delta y_e} \right)^{- 1} \text{CFL}
\label{eq:mdrk.time.step.2d}
\end{equation}
where $e$ is the element index, $(\bar{u}_e, \bar{v}_e), \bar{c}_e$ are
velocity and sound speed of element mean in element $e$, CFL=0.107
(Table~\ref{tab:mdrk.cfl}) and $C_s \leq 1$ is a safety factor.

For verification of some our numerical results and to demonstrate the accuracy
gain observed in~{\cite{babbar2023admissibility}} of using MUSCL-Hancock
reconstruction using Gauss-Legendre points, we will compare our results with
the first order blending scheme using Gauss-Legendre-Lobatto (GLL) points
of~{\cite{hennemann2021}} available in
\tmtexttt{Trixi.jl}~{\cite{Ranocha2022}}. The accuracy benefit is expected
since GL points and quadrature are more accurate than GLL points, and
MUSCL-Hancock is also more accurate than first order finite volume method.

\subsubsection{Double Mach reflection}

This test case was originally proposed by Woodward and
Colella~{\cite{Woodward1984}} and consists of a shock impinging on a
wedge/ramp which is inclined by 30 degrees. An equivalent problem is obtained
on the rectangular domain $\Omega = [0, 4] \times [0, 1]$ obtained by rotating
the wedge so that the initial condition now consists of a shock angled at 60
degrees. The solution consists of a self similar shock structure with two
triple points. Define $\uu_b = \uu_b (x, y, t)$ in primitive variables as
\[ (\rho, u, v, p) = \begin{cases}
(8, 8.25 \cos (\frac{\pi}{6}), - 8.25 \sin (\frac{\pi}{6}), 116.5), &
\text{if } x < \frac{1}{6} + \frac{y + 20 t}{\sqrt{3}}\\
(1.4, 0, 0, 1), & \text{if } x > \frac{1}{6} + \frac{y + 20 t}{\sqrt{3}}
\end{cases} \]
and take the initial condition to be $\uu_0 (x, y) = \uu_b (x, y, 0)$. With
$\uu_b$, we impose inflow boundary conditions on the left side $\{0\} \times
[0, 1]$, outflow boundary conditions both on $[0, 1 / 6] \times \{0\}$ and
$\{4\} \times [0, 1]$, reflecting boundary conditions on $[1 / 6, 4] \times
\{0\}$ and inflow boundary conditions on the upper side $[0, 4] \times \{1\}$.

The simulation is run on a mesh of $600 \times 150$ elements using degree $N = 3$ polynomials up to time $t = 0.2$. In Figure~\ref{fig:mdrk.dmr}, we compare the results of \tmtexttt{Trixi.jl} with the MUSCL-Hancock blended scheme zoomed near the primary triple point. As expected, the small scale structures are captured better by the MUSCL-Hancock blended scheme.

\begin{figure}
\centering
\resizebox{0.8\columnwidth}{!}{\includegraphics{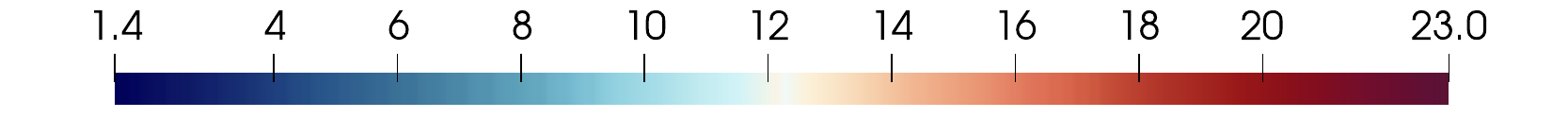}}\\
{\noindent}\begin{tabular}{cc}
\resizebox{0.45\columnwidth}{!}{\includegraphics{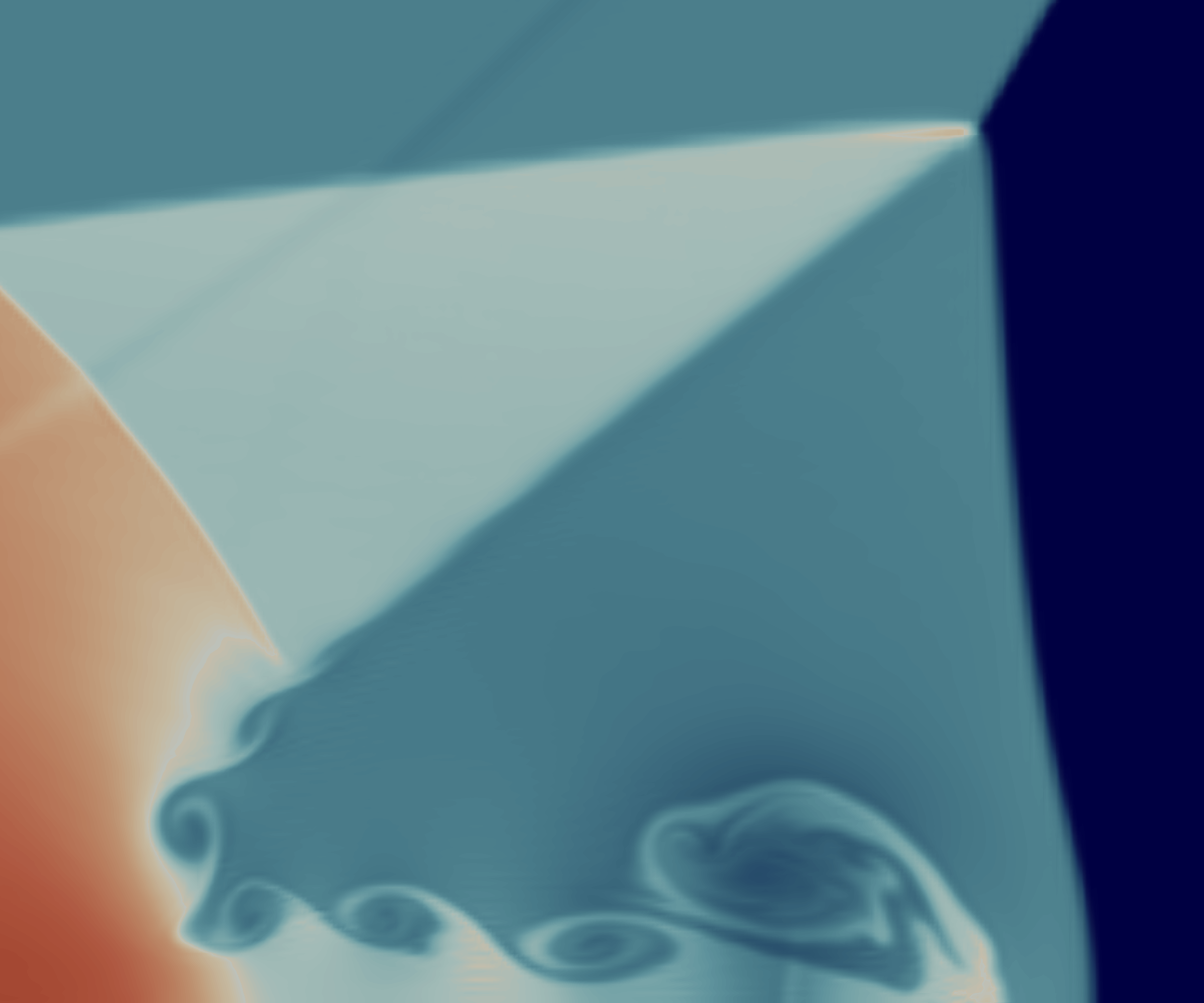}}
&
\resizebox{0.45\columnwidth}{!}{\includegraphics{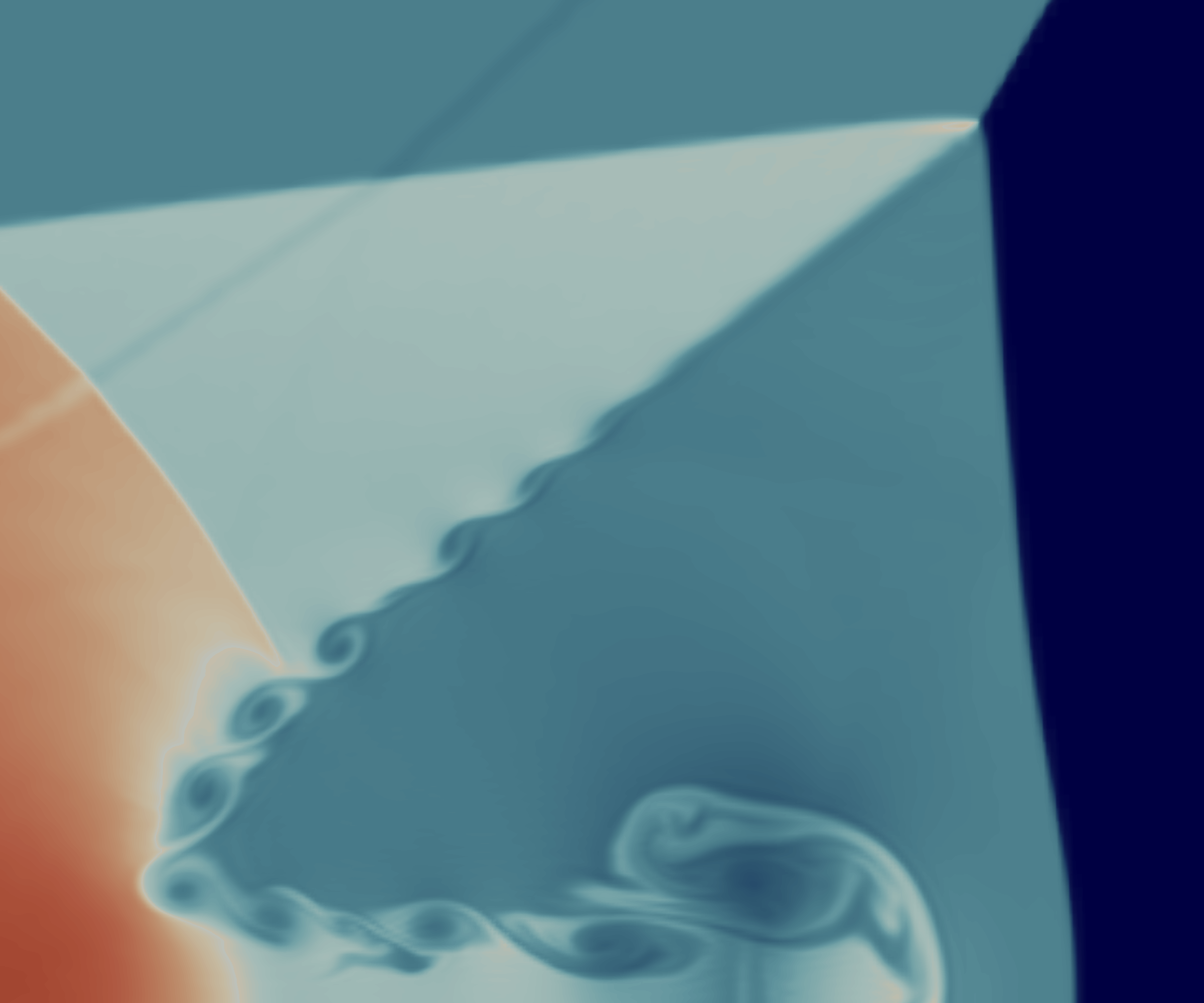}}\\
(a) \tmverbatim{Trixi.jl} & (b) MDRK
\end{tabular}
\caption{Double Mach reflection problem, density plot of numerical solution at $t = 0.2$ on a $600 \times 150$ mesh zoomed near the primary triple point.\label{fig:mdrk.dmr}}
\end{figure}

\subsubsection{Rotational low density problem}

This problems are taken from~{\cite{Pan2016}} where the solution consists of
hurricane-like flow evolution, where solution has one-point vacuum in the
center with rotational velocity field. The initial condition is given by
\[ (\rho, u, v, p) = (\rho_0, v_0 \sin \theta, - v_0 \cos \theta, A
\rho_0^{\gamma}) \]
where $\theta = \arctan (y / x)$, $A = 25$ is the initial entropy, $\rho = 1$
is the initial density, gas constant $\gamma = 2$. The initial velocity
distribution has a nontrivial transversal component, which makes the flow
rotational. The solutions are classified~{\cite{Zhang1997}} into three types
according to the initial Mach number $M_0 = | v_0 | / c_0$, where $c_0 = p'
(\rho_0) = A \gamma \rho_0^{\gamma - 1}$ is the sound speed.
\begin{enumerate}
\item \tmtextbf{ Critical rotation with $M_0 = \sqrt{2}$.} This test has an
exact solution with explicit formula. The solution consists of two parts: a
far field solution and a near-field solution. The former far field solution
is defined for $r \geq 2 t \sqrt{p' (\rho_0)}$, $r = \sqrt{x^2 + y^2}$,
\begin{equation}
\begin{split}
U (x, y, t) & = \frac{1}{r}  (2 tp_0' \cos \theta + \sqrt{2 p_0'}
\sqrt{r^2 - 2 t^2 p_0'} \sin \theta)\\
V (x, y, t) & = \frac{1}{r}  (2 tp_0' \sin \theta - \sqrt{2 p_0'}
\sqrt{r^2 - 2 t^2 p_0'} \cos \theta)\\
\rho (x, y, t) & = \rho_0\\
p (x, y, t) & = A \rho_0^{\gamma}
\end{split} \label{eq:mdrk.far.field.solution}
\end{equation}
and the near-field solution is defined for $r < 2 t \sqrt{p' (\rho_0)}$ as
\[ U (x, y, t) = \frac{x + y}{2 t}, \quad V (x, y, t) = \frac{- x + y}{2 t},
\quad \rho (x, y, t) = \frac{r^2}{8 At^2} \]
The curl of the velocity in the near-field is
\[ \text{curl} (U, V) = V_x - U_y = - \frac{1}{2 t} \neq 0 \]
and the solution has one-point vacuum at the origin $r = 0$. This is a typical hurricane-like solution that behaves highly singular, particularly near the
origin $r = 0$. There are two issues here challenging the numerical schemes:
one is the presence of the vacuum state which examines whether a high order
scheme can keep the positivity preserving property; the other is the
rotational velocity field for testing whether a numerical scheme can
preserve the symmetry. In this regime, we take $v_0 = 10$ on the
computational domain $[- 1, 1]^2$ with $\Delta x = \mathLaplace y = 1 /
100$. The boundary condition is given by the far field solution
in~\eqref{eq:mdrk.far.field.solution}.

\item \tmtextbf{High-speed rotation with $M_0 > \sqrt{2}$.} For this case,
$v_0 = 12.5$, so that the density goes faster to the vacuum and the fluid
rotates severely. The physical domain is $[- 2, 2]^2$ and the grid spacing
is $\mathLaplace x = \mathLaplace y = 1 / 100$. Outflow boundary conditions
are given on the boundaries.Because of the higher rotation speed, this case
is tougher than the first one, and can be used to validate the robustness of
the higher-order scheme.

\item \tmtextbf{Low-speed rotation with $M_0 < \sqrt{2}$.} In this test
case, we take $v_0 = 7.5$ making it a rotation with lower speed than the
previous tests. The outflow boundary conditions are given as in the previous
tests. The simulation is performed in the domain $[- 1, 1]^2$ till $t =
0.045$. The symmetry of flow structures is preserved.
\end{enumerate}
The density profile for the flow with critical speed are shown in
Figure~\ref{fig:mdrk.hurricane.critical} including a comparison with exact solution
at a line cut of $y = 0$ in Figure~\ref{fig:mdrk.hurricane.critical}b, showing near
overlap. In Figure~\ref{fig:mdrk.rotational.all.speed}a, we show the line cut of
density profile at $y = 0$ for the three rotation speeds. In
Figure~\ref{fig:mdrk.rotational.all.speed}b, we show streamlines for high
rotational speed, showing symmetry.

\begin{figure}
\centering
{\noindent}\begin{tabular}{cc}
\resizebox{0.38\columnwidth}{!}{\includegraphics{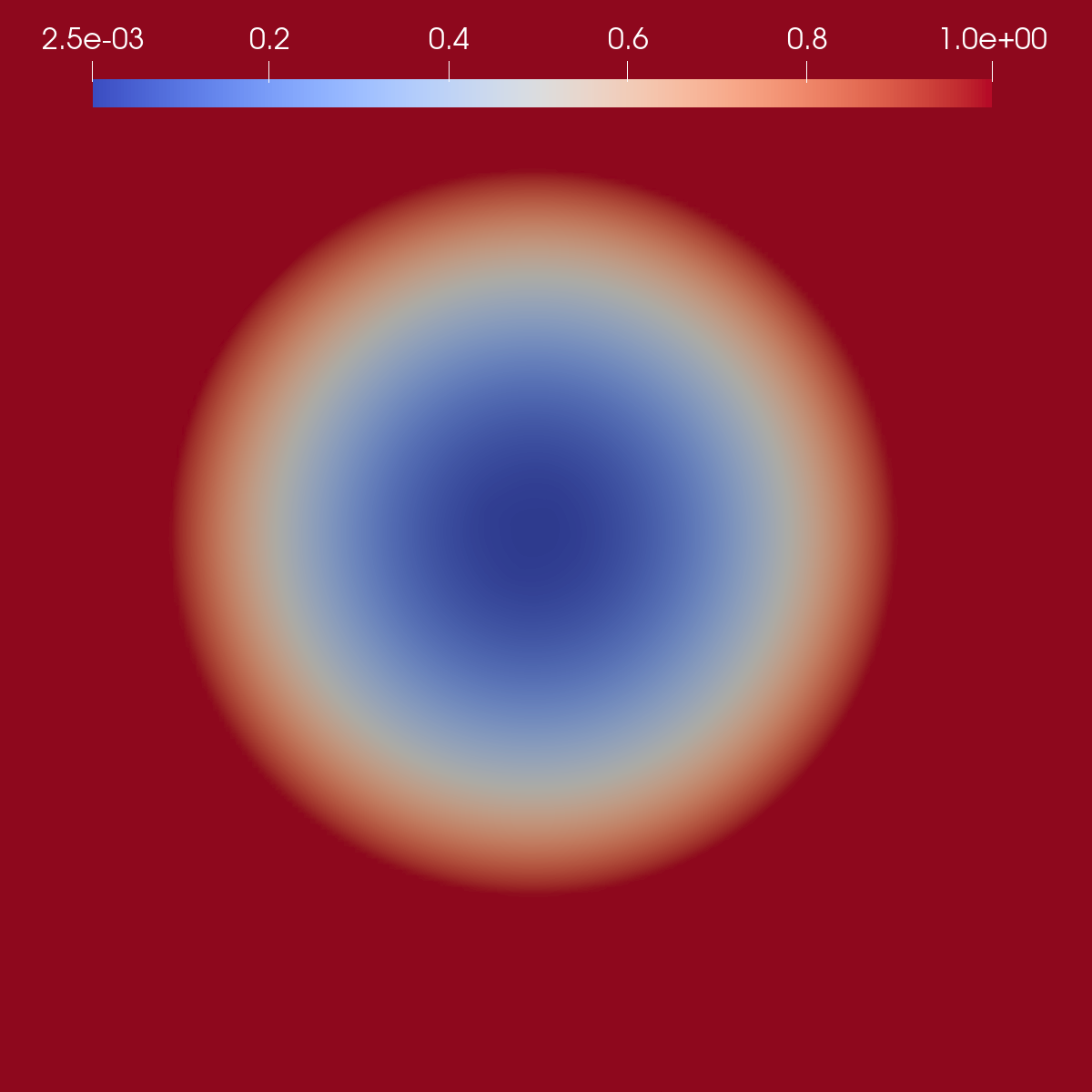}}
&
\resizebox{0.52\columnwidth}{!}{\includegraphics{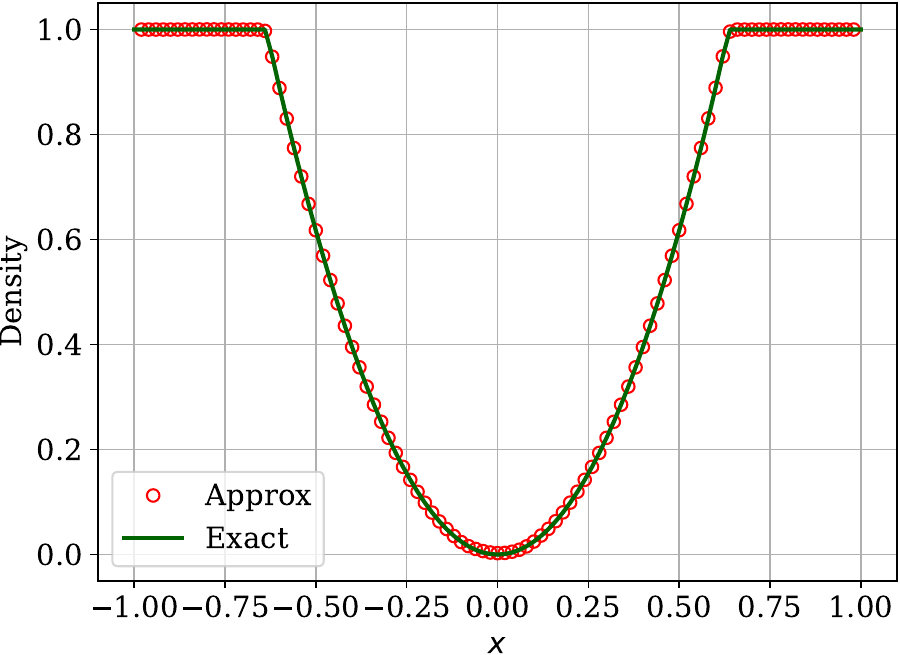}}\\
(a) & (b)
\end{tabular}
\caption{Density profile of rotational low density problem at critical speed
(a) Pseudocolor plot (b) Line cut at $y = 0$ on a mesh with $\mathLaplace x
= \mathLaplace y = 1 / 100$.\label{fig:mdrk.hurricane.critical}}
\end{figure}

\begin{figure}
\centering
{\noindent}\begin{tabular}{cc}
\resizebox{0.52\columnwidth}{!}{\includegraphics{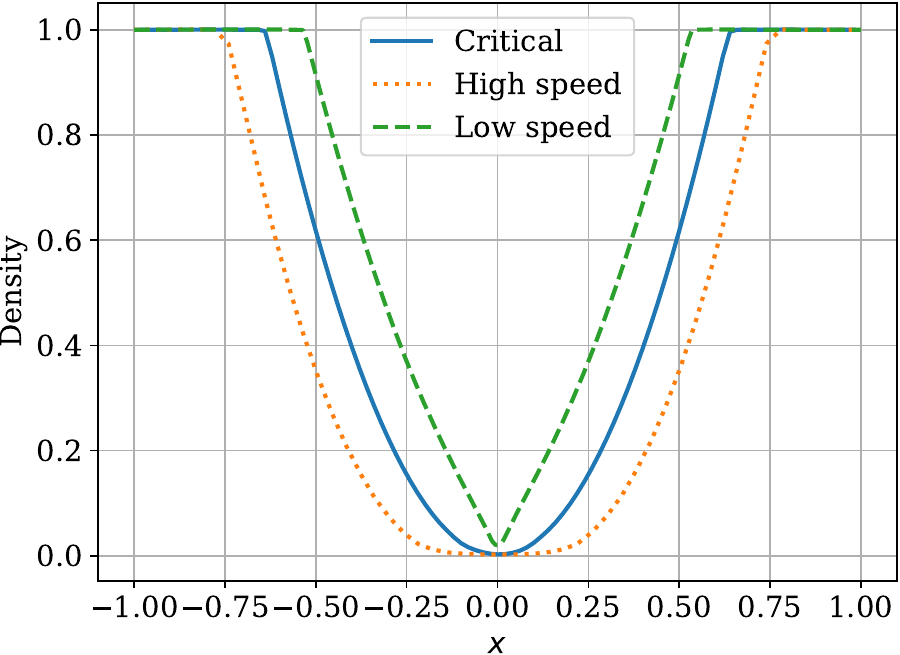}}
&
\resizebox{0.38\columnwidth}{!}{\includegraphics{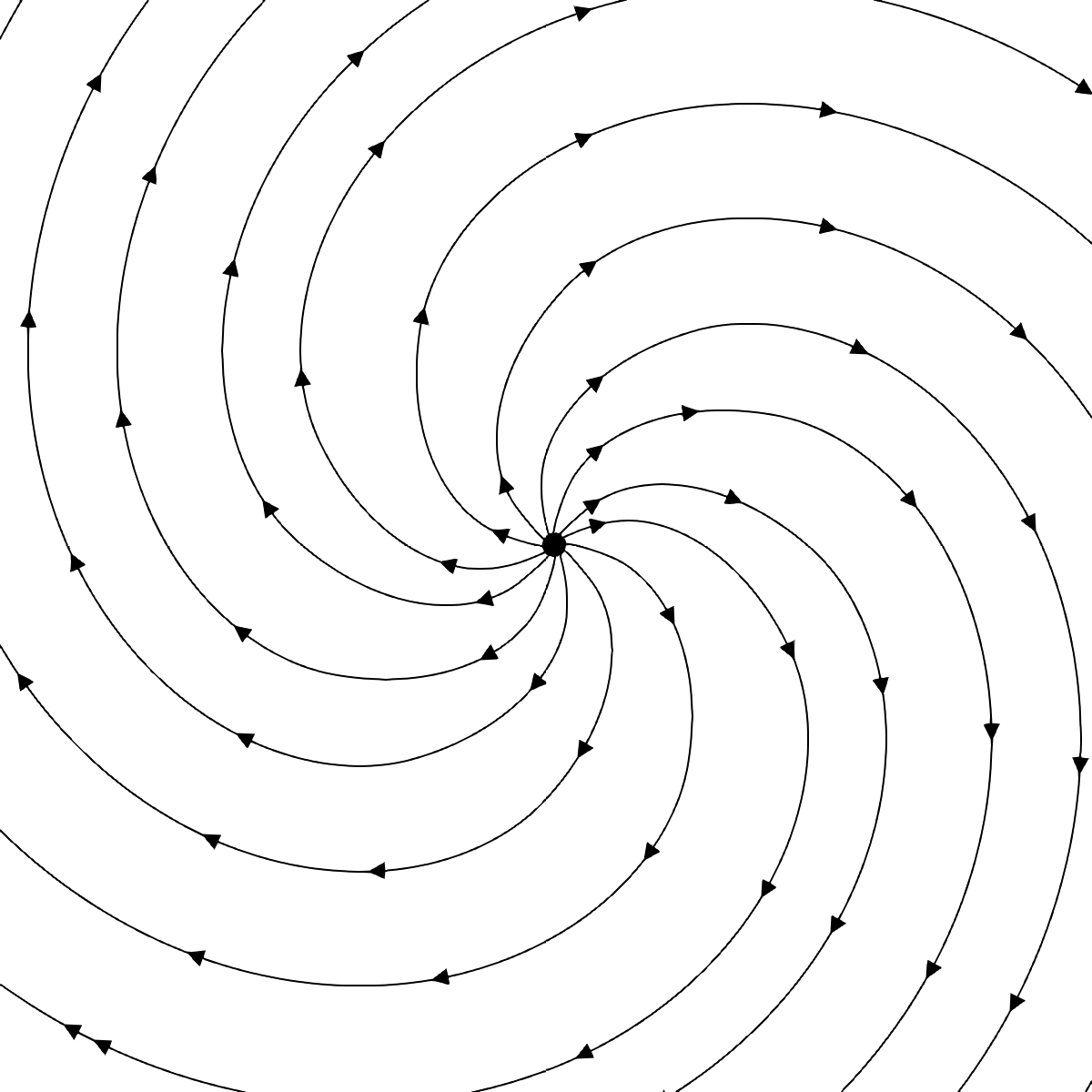}}\\
(a) & (b)
\end{tabular}
\caption{Rotational low density problem (a) Density profile line cut at $y =
0$ for different rotational speeds, (b) Stream lines for high rotational
speed\label{fig:mdrk.rotational.all.speed}}
\end{figure}

\subsubsection{Two Dimensional Riemann problem}

2-D Riemann problems consist of four constant states and have been studied
theoretically and numerically for gas dynamics in~{\cite{Glimm1985}}. We
consider this problem in the square domain $[0, 1]^2$ where each of the four
quadrants has one constant initial state and every jump in initial condition
leads to an elementary planar wave, i.e., a shock, rarefaction or contact
discontinuity. There are only 19 such genuinely different configurations
possible~{\cite{Zhang1990,Lax1998}}. As studied in~{\cite{Zhang1990}}, a
bounded region of subsonic flows is formed by interaction of different planar
waves leading to appearance of many complex structures depending on the
elementary planar flow. We consider configuration 12 of~{\cite{Lax1998}}
consisting of 2 positive slip lines and two forward shocks, with initial
condition
\[ (\rho, u, v, p) = \begin{cases}
(0.5313, 0, 0, 0.4) \qquad & \text{if } x \ge 0.5, y \ge 0.5\\
(1, 0.7276, 0, 1) & \text{if } x < 0.5, y \ge 0.5\\
(0.8, 0, 0, 1) & \text{if } x < 0.5, y < 0.5\\
(1, 0, 0.7276, 1) & \text{if } x \ge 0.5, y < 0.5
\end{cases} \]
The simulations are performed with transmissive boundary conditions on an enlarged domain \correction{up to} time $t = 0.25$. The density profiles obtained from the MUSCL-Hancock blending scheme and \tmtexttt{Trixi.jl} are shown in Figure~\ref{fig:mdrk.rp2d}. We see that both schemes give similar resolution in most regions. The MUSCL-Hancock blending scheme gives better resolution of the small scale structures arising across the slip lines.

\begin{figure}
\centering
\resizebox{0.7\columnwidth}{!}{\includegraphics{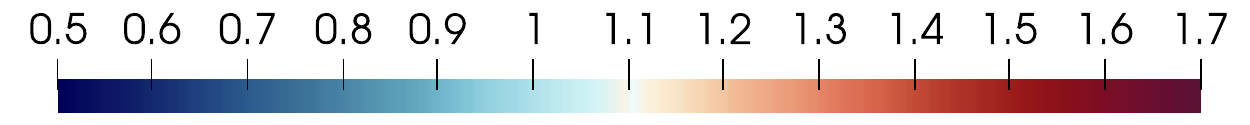}} \\
{\noindent}\begin{tabular}{cc}
\resizebox{0.46\columnwidth}{!}{\includegraphics{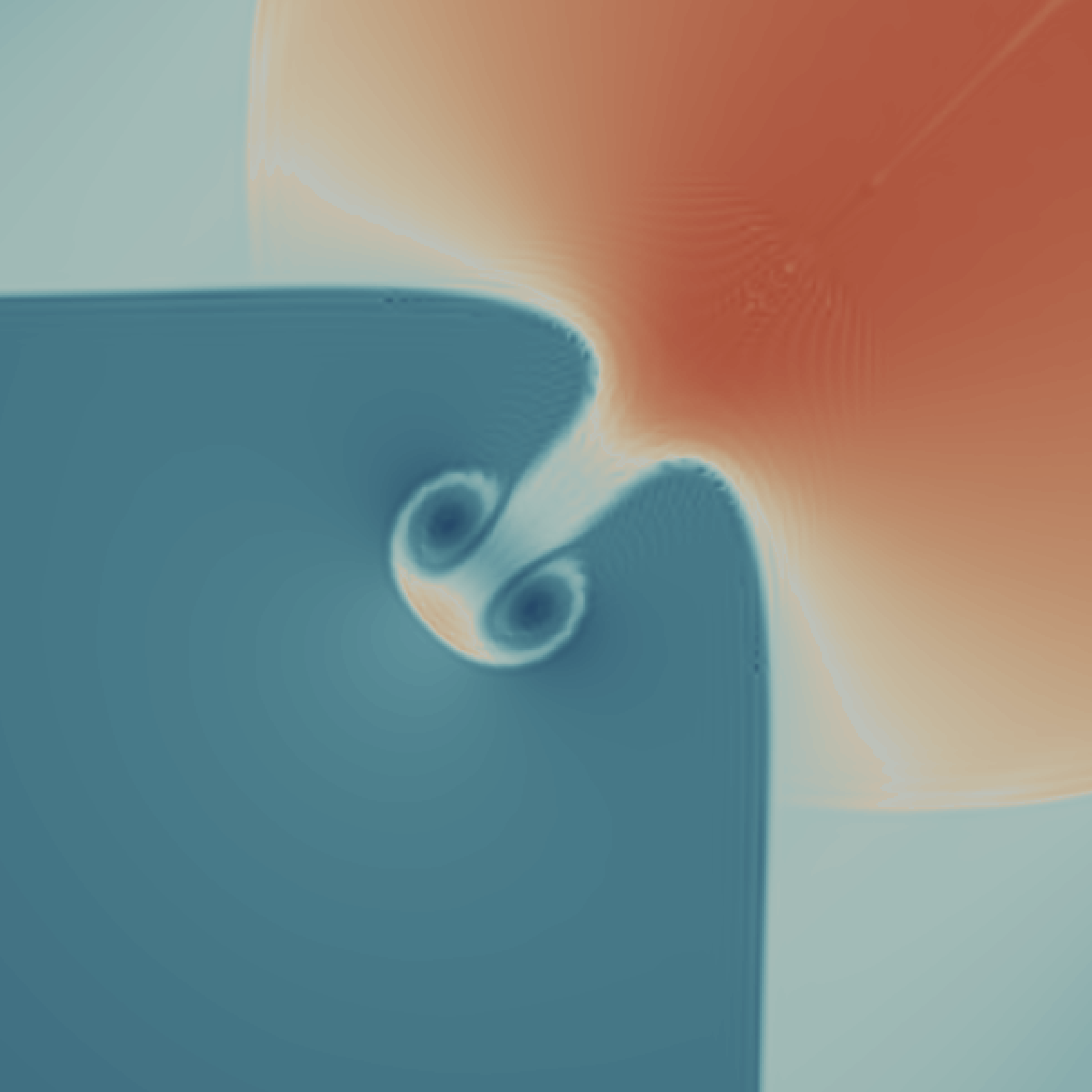}}
&
\resizebox{0.46\columnwidth}{!}{\includegraphics{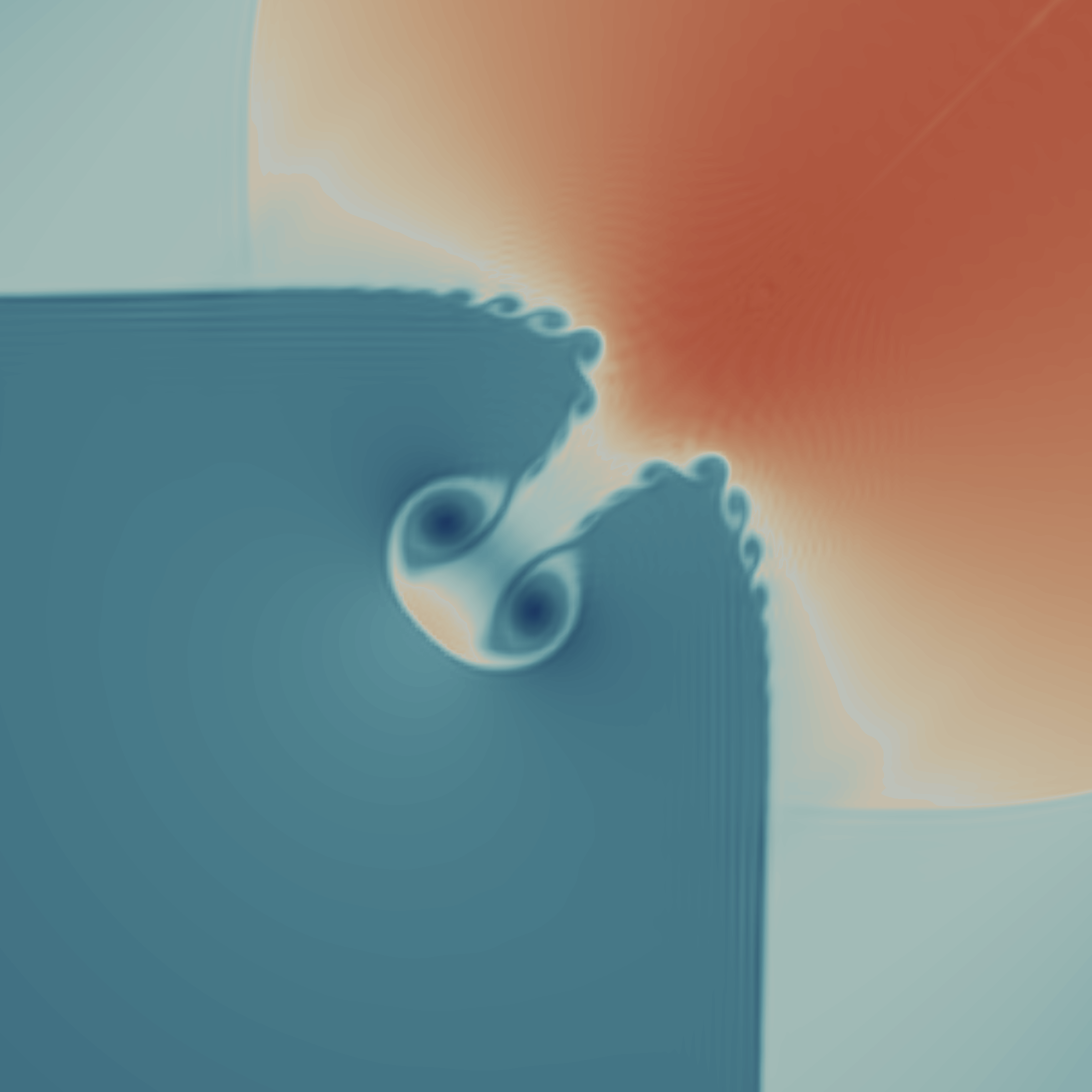}}\\
(a)
\tmverbatim{Trixi.jl} & (b) MDRK
\end{tabular}
\caption{2-D Riemann problem, density plots of numerical solution at $t =
0.25$ for degree $N = 3$ on a $256 \times 256$ mesh. \label{fig:mdrk.rp2d}}
\end{figure}

\subsubsection{Rayleigh-Taylor instability}

The last problem is the Rayleigh-Taylor instability to test the performance of
higher-order scheme for the conservation laws with source terms, and the
governing equations are written as
\[ \pd{}{t}  \left(\begin{array}{c}
\rho\\
\rho u\\
\rho v\\
E
\end{array}\right) + \pd{}{x}  \left( \begin{array}{c}
\rho u\\
p + \rho u^2\\
\rho uv\\
(E + p) u
\end{array} \right) + \pd{}{y}  \left( \begin{array}{c}
\rho v\\
\rho uv\\
p + \rho v^2\\
(E + p) v
\end{array} \right) = \left( \begin{array}{c}
0\\
0\\
\rho\\
\rho v
\end{array} \right) \]
It is also a test to check the suitability of higher-order schemes for the capturing of interface instabilities. The implementation of MDRK with source terms is explained in Appendix~\ref{sec:source.terms} where an approximate Lax-Wendroff procedure is also applied on the source term. The following description of this test is from~{\cite{Pan2016}}. The Rayleigh-Taylor instability happens on the interface between fluids with different densities when an acceleration is directed from the heavy fluid to the light one. The instability with fingering nature generates bubbles of light fluid rising into the ambient heavy fluid and spikes of heavy fluid falling into the light fluid. The initial condition of this problem~{\cite{Shi2003}} is given as follows
\[ (\rho, u, v, p) = \begin{cases}
(2, 0, - 0.025 a \cos (8 \pi x), 2 y + 1), \quad & y \leq 0.5\\
(1, 0, - 0.025 a \cos (8 \pi x), y + 1.5),  & y > 0.5
\end{cases} \]
where $a = \sqrt{\gamma p / \rho}$ is the sound speed, $\gamma = 5 / 3$ and  the computational domain is $[0, 0.25] \times [0, 1]$. The reflecting boundary
conditions are imposed for the left and right boundaries. At the top boundary,
the flow variables are set as $(\rho, u, v, p) = (1, 0, 0, 2.5)$. At the
bottom boundary, they are $(\rho, u, v, p) = (2, 0, 0, 1)$. The uniform mesh
with $64 \times 256$ elements is used in the simulation. The density
distributions at $t = 1.5, 1.75, 2, 2.25, 2.5$ are presented in
Figure~\ref{fig:mdrk.rayleigh.taylor} which follows qualitatively the physical picture described above.
\begin{figure}
\centering
\resizebox{0.7\columnwidth}{!}{\includegraphics{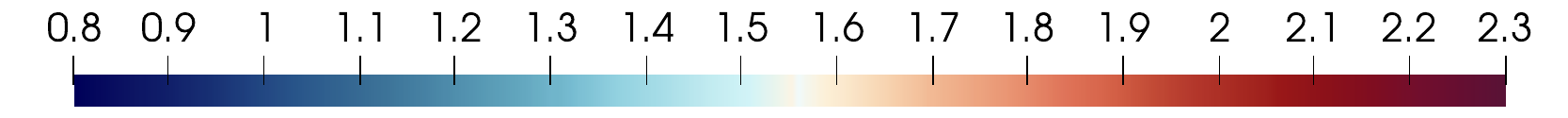}} \\
{\noindent}\begin{tabular}{ccccc}
\resizebox{0.18\columnwidth}{!}{\includegraphics{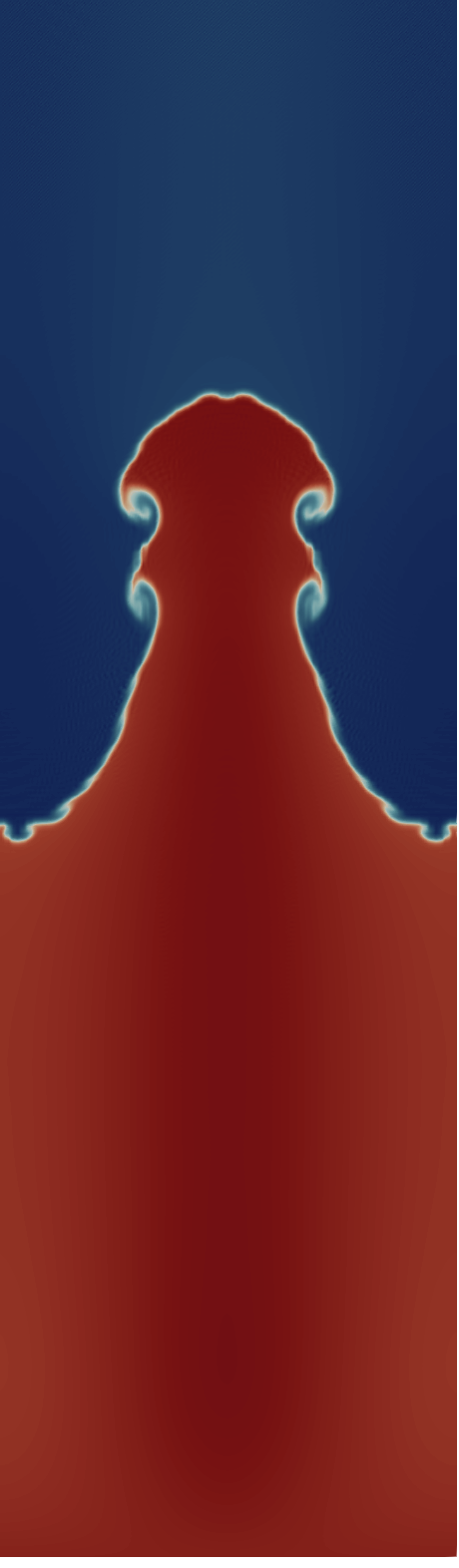}}
&
\resizebox{0.18\columnwidth}{!}{\includegraphics{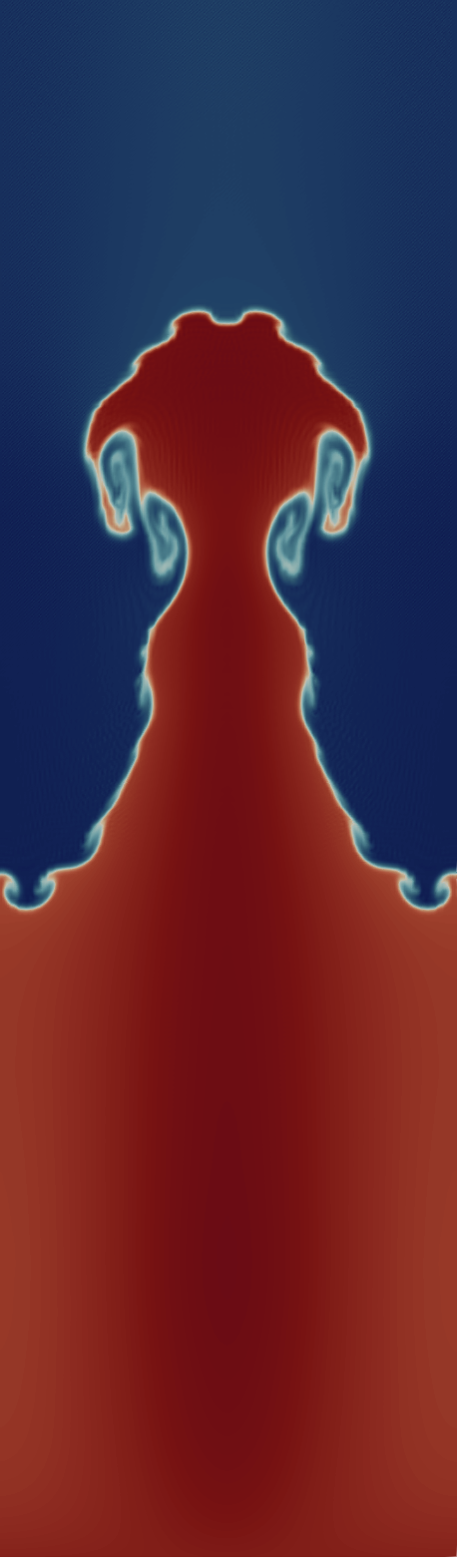}}
&
\resizebox{0.18\columnwidth}{!}{\includegraphics{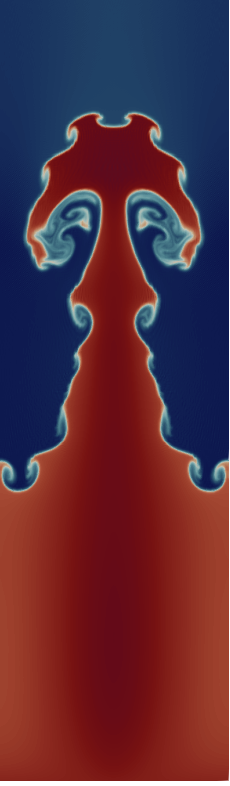}}
&
\resizebox{0.18\columnwidth}{!}{\includegraphics{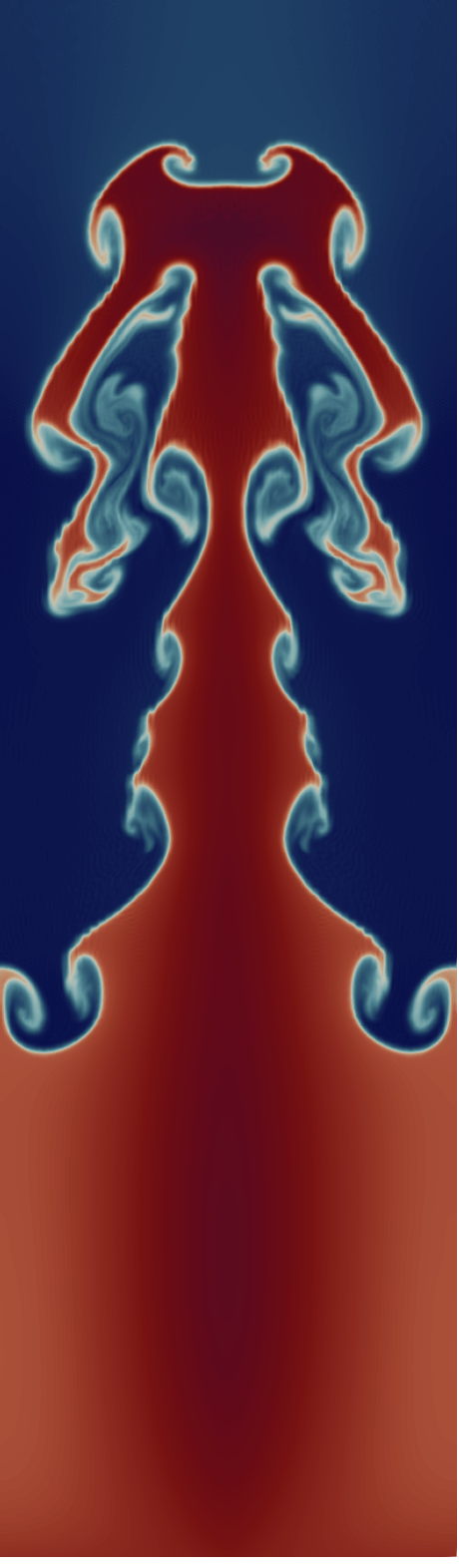}}
&
\resizebox{0.18\columnwidth}{!}{\includegraphics{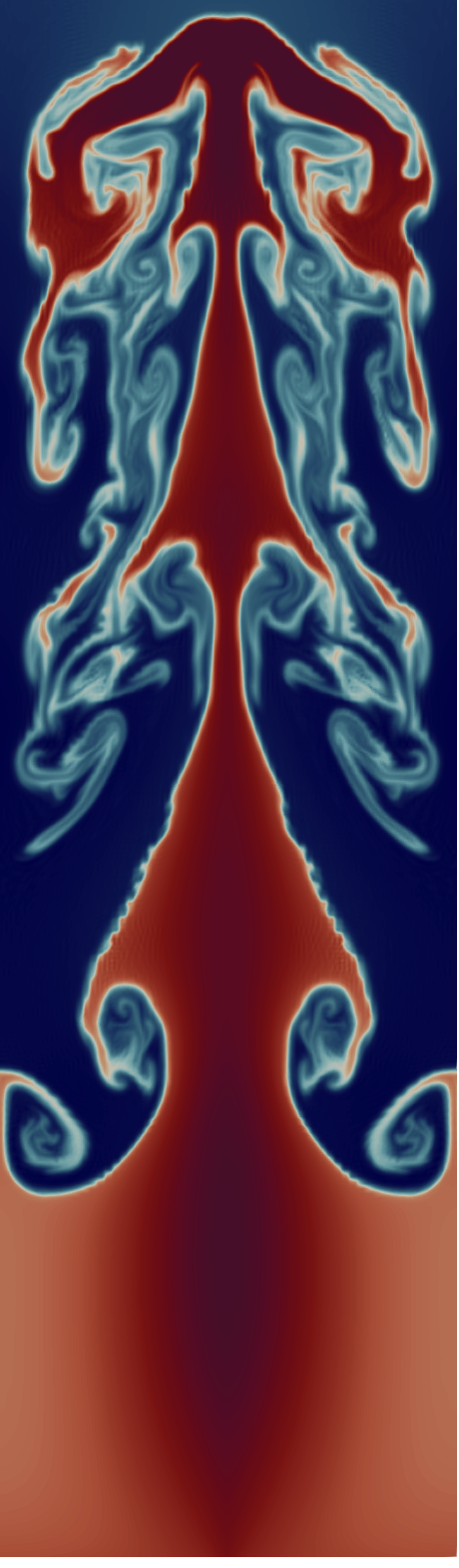}}\\
(a) $t = 1.5$ & (b) $t = 1.75$ & (c) $t = 2$ & (d) $t = 2.25$ & (e) $t =
2.5$
\end{tabular}
\caption{Density for the Rayleigh-Taylor instability on a $64 \times 256$
mesh\label{fig:mdrk.rayleigh.taylor}}
\end{figure}

\subsubsection{Timing studies of the MDRK method}
\correction{We will compare the proposed fourth order multiderivative Runge-Kutta method in Flux Reconstruction (MDRK-FR) framework with the standard Runge-Kutta Flux Reconstruction (RK-FR) using the five stage, fourth order Strong Stability Preserving Runge-Kutta (SSPRK) time discretization together with the blending limiter. The comparison is made using Gauss-Legendre solution points, with each scheme using its optimal CFL number in~\eqref{eq:mdrk.time.step.2d}. The CFL number of the MDRK scheme is taken from Table~\ref{tab:mdrk.cfl}. For the fourth order SSPRK-FR scheme, we performed a Fourier stability analysis to determine its optimal CFL number to be 0.215. The MDRK scheme uses the MUSCL-Hancock blending scheme, while the SSPRK scheme uses a blending limiter with second order MUSCL reconstruction on the subcells at each stage. The comparison has been made in Table~\ref{tab:performance} using our code \texttt{Tenkai.jl}~\cite{tenkai} for all the Cartesian tests in Section~\ref{sec:2d.euler}. Table~\ref{tab:performance} is not meant to be a comparison between different hardware vendors, especially since the processors compared are not of the same generation. These measurements are only to show the performance comparison of MDRK and SSPRK across different architectures. While there is a spread in the timings based on the hardware, we see that MDRK is about 20\%  to 40\% faster than Runge-Kutta scheme for most problems. A more realistic comparison should be done with MPI versions of these codes, but it is not part of the current work.
}

\begin{table}
\centering
\begin{tabular}{*{10}c}
\toprule
Test &  \multicolumn{3}{c}{Intel Xeon Gold 5320} & \multicolumn{3}{c}{AMD Threadripper} & \multicolumn{3}{c}{Apple M3 Pro}\\
\midrule
{} & SSPRK   & MDRK & Ratio   & SSPRK   & MDRK & Ratio & SSPRK & MDRK & Ratio\\
Double Mach &  14902 & 10857   & 1.37 & 9617 & 7690 & 1.25  & 6737 & 4907 & 1.37\\
Rotational flow &  1367 & 1102   & 1.24 & 971 & 793 & 1.22 & 705 & 533 & 1.32\\
Riemann problem &  12095  &  11319   & 1.06 & 8662 &  8384 & 1.03 & 6232 & 5307 & 1.17\\
Rayleigh Taylor &  9811  &  7989   & 1.22 & 7091 & 5271 & 1.34 & 4753 & 3368 &  1.41\\
\bottomrule
\end{tabular}
\caption{Wall Clock Time (WCT) in seconds of the numerical tests of 2-D Euler's equations for multidervative Runge-Kutta (MDRK) method and Strong Stability Preserving Runge-Kutta (SSPRK) methods in flux reconstruction framework.} \label{tab:performance}
\end{table}
% Source - https://tex.stackexchange.com/questions/22856/how-to-merge-columns-in-a-table

\subsection{Extension to curvilinear grids}

\correction{The extension to curvilinear meshes to the MDRK scheme is made analogously to~\cite{babbar2024curved}. The time step size is computed using a CFL based formula~(88) from~\cite{babbar2024curved}. A more detailed description and an introduction of error based time stepping~\cite{babbar2024curved} for the MDRK scheme will be part of a future work.}

\subsubsection{Isentropic vortex test}

This is a test with exact solution taken from~{\cite{hennemann2021}} where the domain
is specified by the following transformation from $[0, 1]^2 \to \Omega$
\[ \bx (\xi, \eta) = \left(\begin{array}{c}
\xi L_x - A_x L_y \sin (2 \pi \eta)\\
\eta L_y + A_y L_x \sin (2 \pi \xi)
\end{array}\right) \]
which is a distortion of the square $[0, L_x] \times [0, L_y]$ with sine waves
of amplitudes $A_x, A_y$. Following~{\cite{hennemann2021}}, we choose length
$L_x = L_y = 0.1$ and amplitudes $A_x = A_y = 0.1$. The boundaries are set to
be periodic. A vortex with radius $R_v = 0.005$ is initialized in the curved
domain with center $(x_v, y_v) = (L_x / 2, L_y / 2)$. The gas constant is
taken to be $R_{\tmop{gas}} = 287.15$ and specific heat ratio $\gamma = 1.4$
as before. The free stream state is defined by the Mach number $M_0 = 0.5$,
temperature $T_0 = 300$, pressure $p_0 = 10^5$, velocity $u_0 = M_0
\sqrt{\gamma R_{\tmop{gas}} T_0}$ and density $\rho_0 =
\frac{p_0}{R_{\tmop{gas}} T_0}$. The initial condition $\uu_0$ is given by
\begin{equation*}
\begin{gathered}
(\rho, u, v, p) =  \left(
 \rho_0  \left( \frac{T}{T_0} \right)^{\frac{1}{\gamma - 1}},
 u_0  \left( 1 - \beta \frac{y - y_v}{R_v} e^{\frac{- r^2}{2}} \right),
u_0 \beta \frac{x - x_v}{R_v} e^{\frac{-r^2}{2}}, \rho (x, y) R_{\tmop{gas}} T
\right) \\
T(x, y) = T_0 - \frac{(u_0 \beta)^2}{2 C_p} e^{- r^2},\qquad r = \sqrt{(x - x_v)^2 + (y - y_v)^2} / R_v
\end{gathered}
\end{equation*}
where $C_p = R_{\tmop{gas}} \gamma / (\gamma - 1)$ is the heat capacity at constant pressure and $\beta = 0.2$ is the vortex strength. The vortex moves in the positive $x$ direction with speed $u_0$ so that the exact solution at time $t$ is $\uu(x,y,t) = \uu_0(x-u_0 t, y)$ where $\uu_0$ is extended outside $\Omega$ by periodicity. We simulate the propagation of the vortex for one time period $t_p = L_x / u_0$ and perform numerical convergence analysis for degree $N=3$ in Figure~\ref{fig:isentropic}b, showing optimal rates in grid size versus $L^2$ error norm for all the conserved variables.

\begin{figure}
\centering
\begin{tabular}{cc}
{\includegraphics[width=0.51\textwidth]{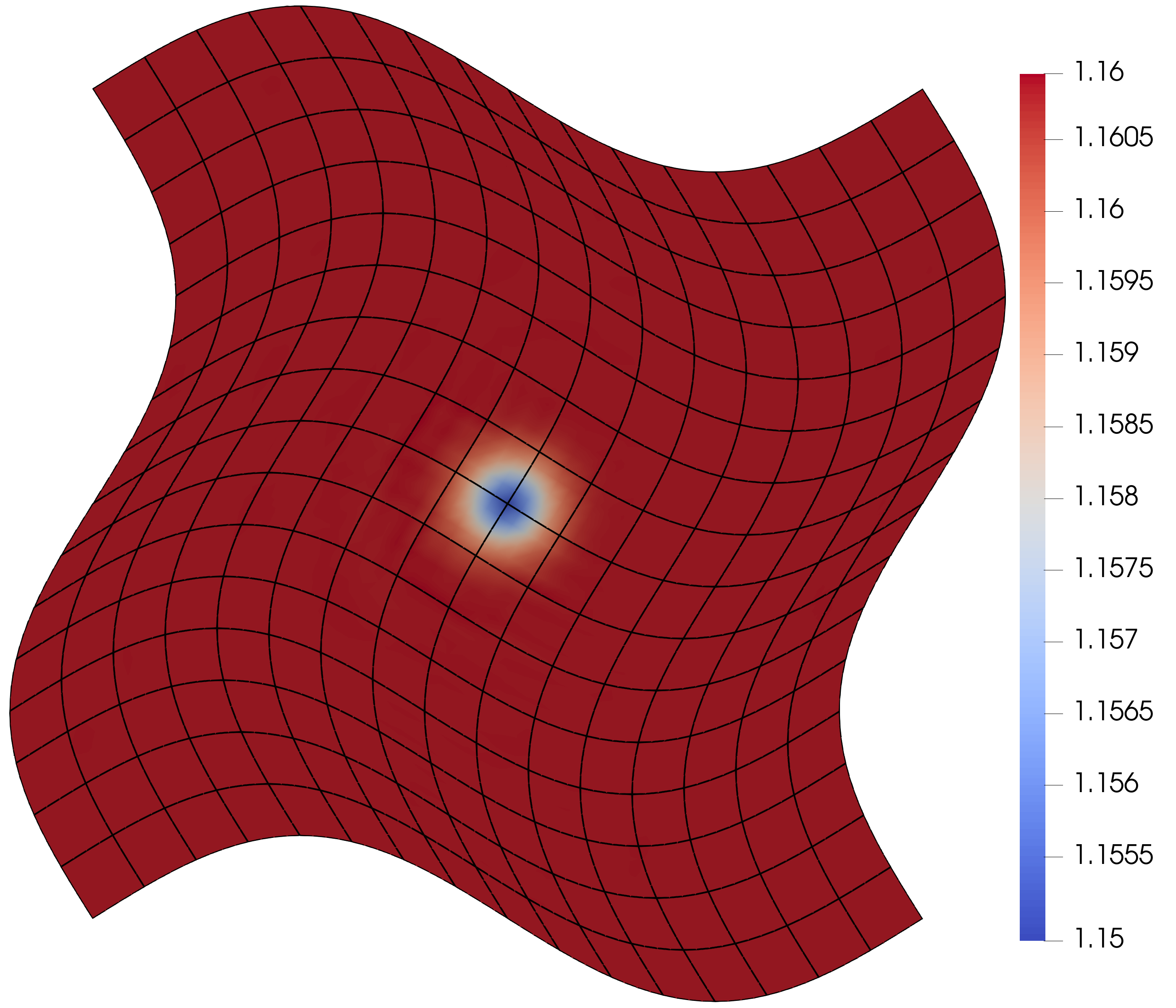}} & {\includegraphics[width=0.43\textwidth]{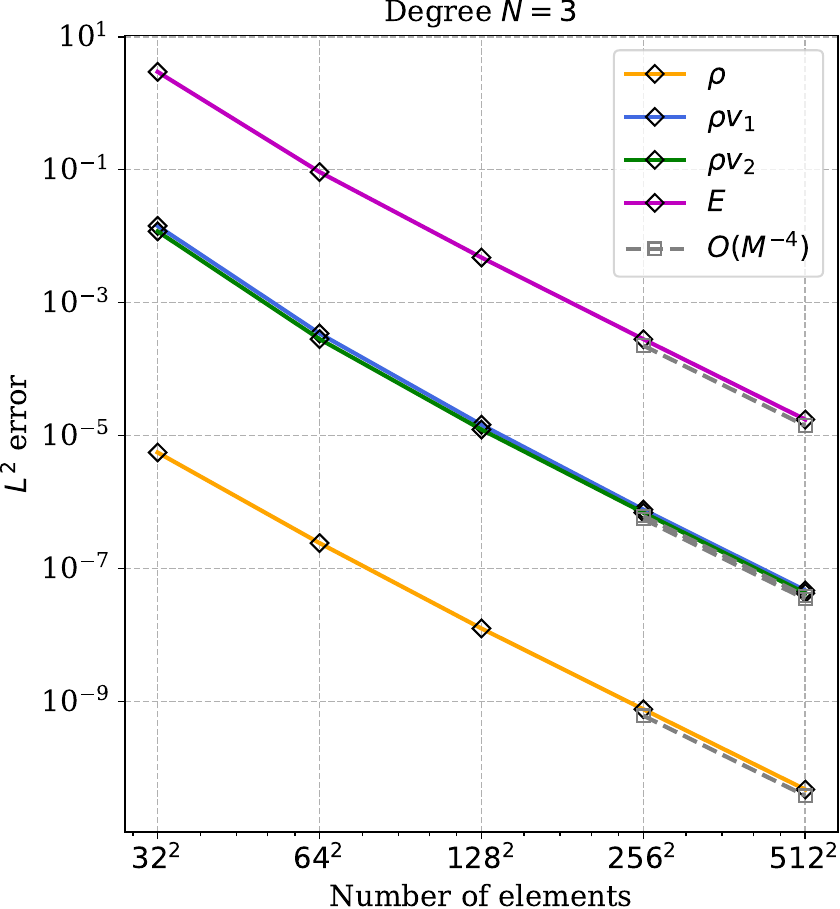}}\\
(a) & (b)
\end{tabular}
\caption{Convergence analysis for isentropic vortex problem with polynomial degree $N = 3$. (a) Density plot, (b) $L^2$ error norm of conserved variables\label{fig:isentropic}}
\end{figure}

\subsubsection{Supersonic flow over cylinder}

Supersonic flow over a cylinder is computed at a free stream Mach number of $3$ with the initial condition
\[
(\rho, u, v, p) = (1.4, 3, 0, 1)
\]
Solid wall boundary conditions are used on the cylinder and at the top and bottom boundaries. A bow shock forms ahead of the cylinder which reflects across the solid walls and interacts with the small vortices forming in the wake of the cylinder.  The setup of L{\"o}hner's smoothness indicator~\cite{lohner1987} is taken from an example of \tmverbatim{Trixi.jl}~{\cite{Ranocha2021}}
\[
(\tmverbatim{base{\_}level}, \tmverbatim{med{\_}level}, \tmverbatim{max{\_}level}) = (0, 3, 5), \qquad
(\tmverbatim{{med}{\_}{threshold}}, \tmverbatim{max{\_}threshold}) = (0.05, 0.1)
\]
where $\tmverbatim{base\_level} = 0$ refers to mesh in Figure~\ref{fig:supersonic.cylinder}a. The flow consists of a strong shock and thus the positivity limiter had to be used to enforce admissibility. The flow behind the cylinder is highly unsteady, with reflected shocks and vortices interacting continuously. The density profile of the numerical solution at $t = 5$ is shown in Figure~\ref{fig:supersonic.cylinder} with mesh and solution polynomial degree $N = 4$ using L{\"o}hner's indicator~\cite{lohner1987} for AMR. The AMR indicator is able to track the shocks and the vortex structures forming in the wake leading to mesh refinement in these areas. The initial mesh has 561 elements which steadily increases to peak at $\approx32000$ elements at $t=1$. The number of elements then slowly decrease to 26000 elements, then increasing again to reach $\approx 31000$ elements at $t=5$. The mesh is refined or coarsened once every 100 time steps. In order to capture the same effective refinement, a uniform mesh will require about 574464 elements.
\begin{figure}
\centering
\begin{tabular}{c}
{\includegraphics[width = 0.7\textwidth]{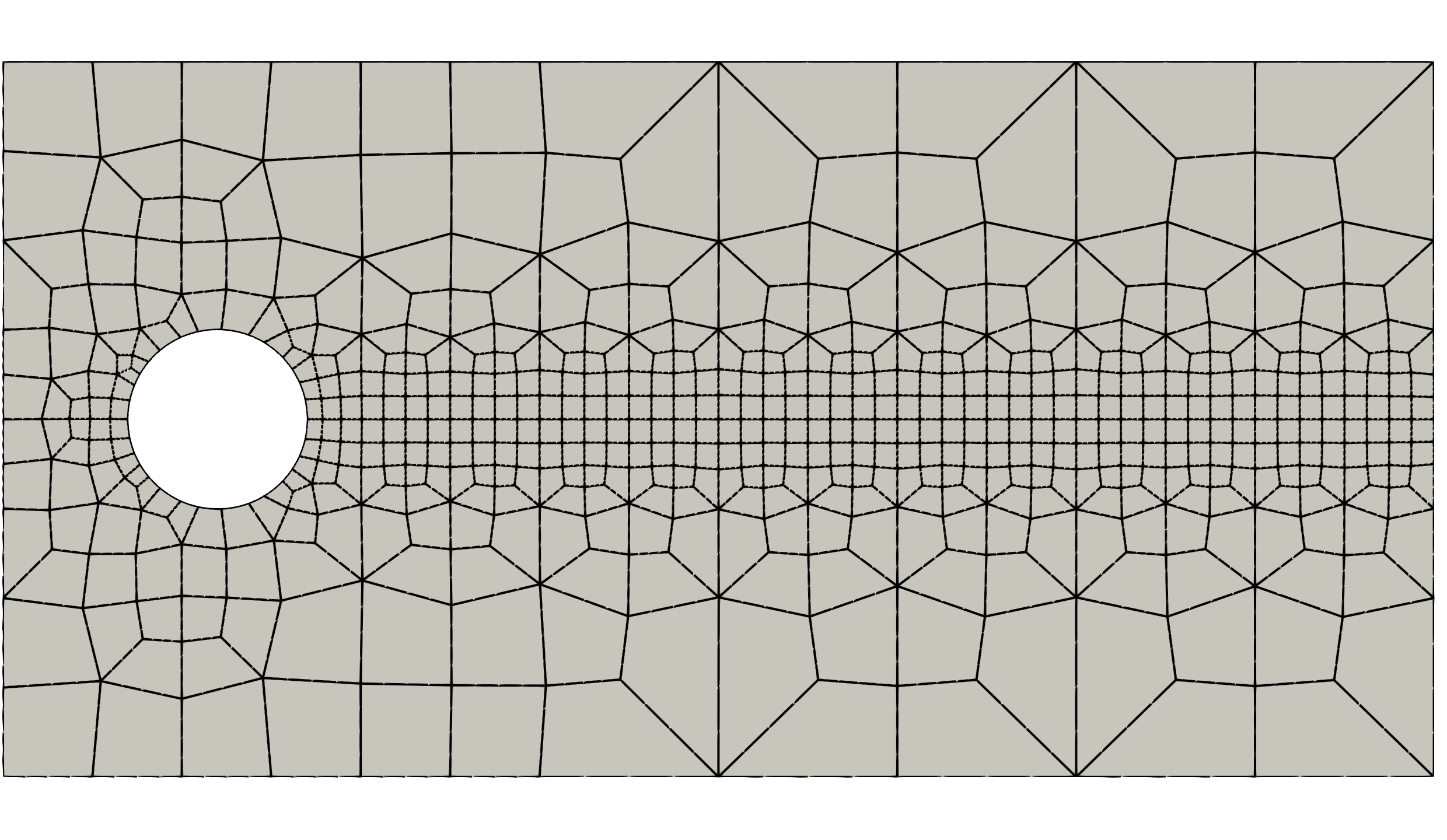}}\\
(a)\\
{\includegraphics[width = 0.7\textwidth]{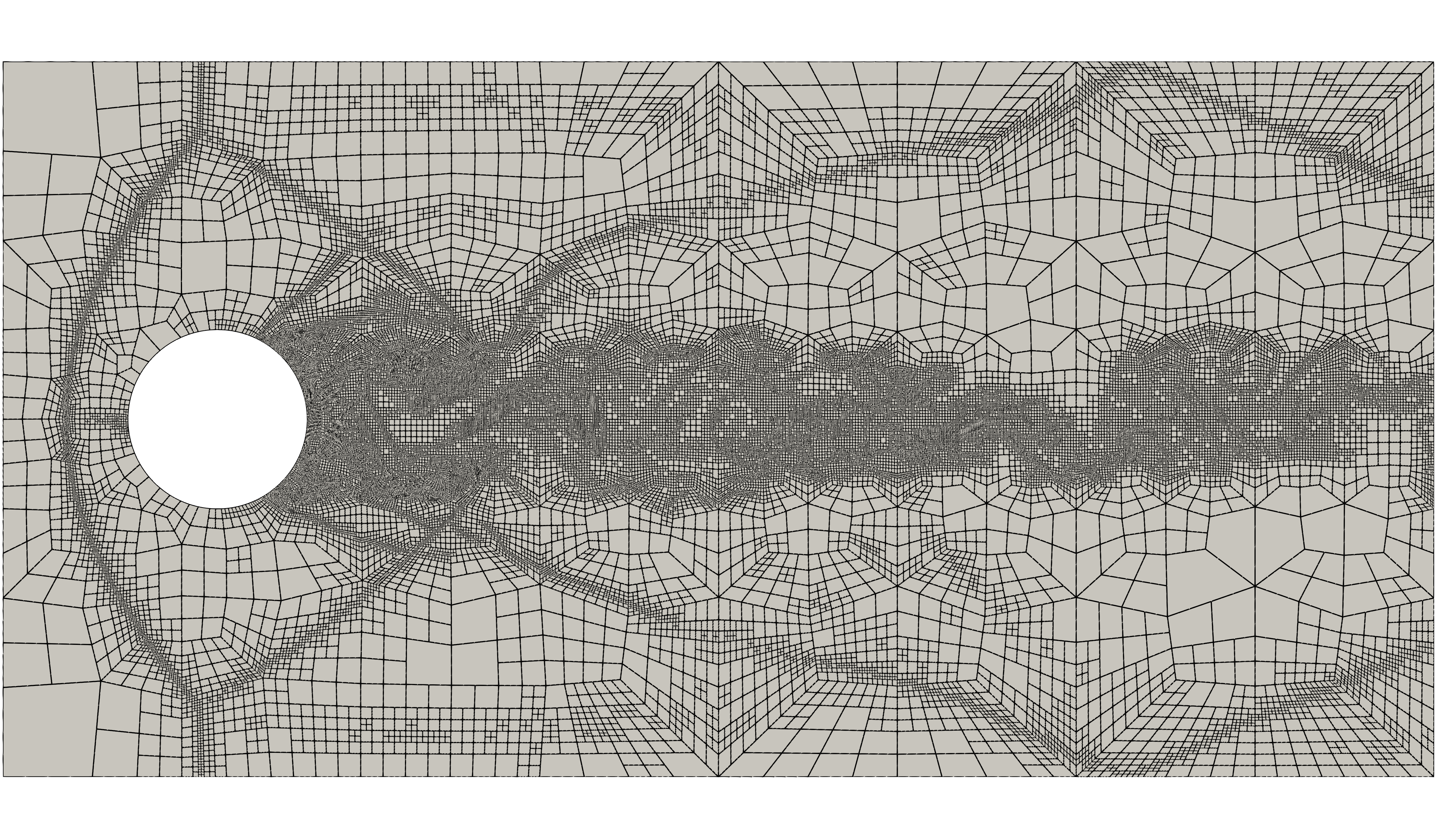}}\\
(b)\\
{\includegraphics[width = 0.7\textwidth]{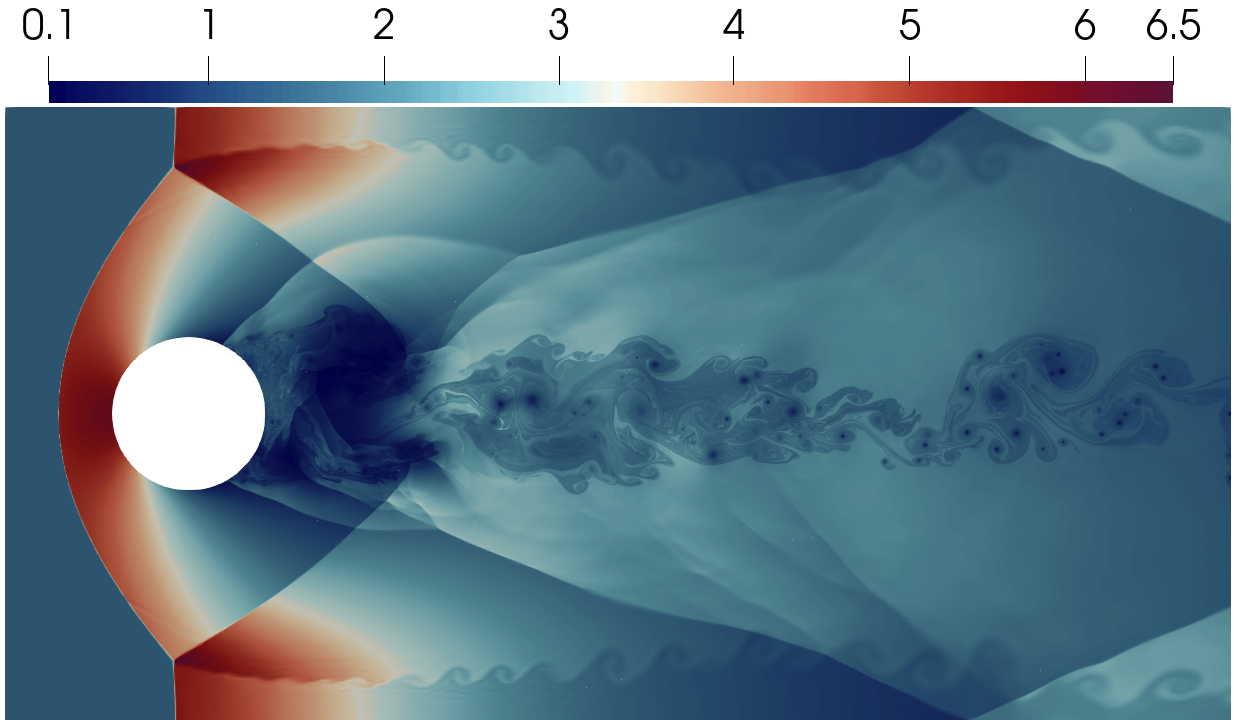}}\\
(c)
\end{tabular}
\caption{\label{fig:supersonic.cylinder}Mach 3 flow over cylinder using solution and mesh polynomial degree $N = 4$ at $t = 5$ (a) Initial mesh, (b) adaptively refined mesh at final time, (c) density plot at final time}
\end{figure}

\section{Summary and conclusions}\label{sec:mdrk.conclusion}

This work introduces the fourth order multiderivative Runge-Kutta (MDRK) scheme
of~{\cite{li2016}} in the conservative, quadrature free, \correction{Flux Reconstruction} framework to solve hyperbolic conservation laws. The idea is to cast each MDRK stage as an evolution involving a time average flux which is approximated by the Jacobian-free approximate Lax-Wendroff procedure. The D2 dissipation and {\evaluate} flux of~{\cite{babbar2022}} have been introduced for numerical flux computation, which enhance stable CFL numbers and accuracy for nonlinear problems, respectively. The stable CFL numbers are computed using Fourier stability analysis for two commonly used correction functions, $g_{\text{Radau}}$ and $g_2$, showing the improved CFL numbers. Numerical convergence studies for nonlinear problems were performed, which revealed that optimal convergence rates were only seen when using the {\evaluate} flux. The shock capturing blending scheme of~\cite{babbar2023admissibility} has also been introduced for the MDRK scheme applied at each stage. Following~\cite{babbar2023admissibility}, flux limiting is applied to each MDRK stage to obtain admissibility preservation in means. The scaling limiter of~\cite{Zhang2010b} is then applied to obtain admissibility preservation at the solution points. Along with being provably admissibility preserving, the scheme is more capable of capturing small scale structures through the use of Gauss-Legendre solution points and application of the MUSCL-Hancock scheme on the subcells. The claims are validated by numerical experiments for compressible Euler's equations with the modern test suite~\cite{Pan2016} for high order methods.

\appendix\section{Derivation of 2-stage, 4-th order scheme}\label{sec:mdrk.formal.accuracy}

We consider the system of time dependent equations
\[ \uu_t = \bL \left( \uu \right) \]
which relates to the hyperbolic conservation law~\eqref{eq:con.law} by
setting $\bL = - \pf \left( \uu \right)_x$. Now, we are analyzing the scheme
\begin{equation}
\begin{split}
\uus & = \uu^n + \mathLaplace ta_{2 \nocomma 1}  \bL \left( \uu^n
\right) + \mathLaplace t^2  \hat{a}_{21}  \bL_t \left( \uu^n \right)\\
\uu^{n + 1} & = \uu^n + \mathLaplace t \left( b_1  \bL \left( \uu^n
\right) + b_2  \bL \left( \uus \right) \right) + \mathLaplace t^2  \left(
\hat{b}_1 \partial_t  \tmmathbf{L} + \hat{b}_2 \partial_t  \tmmathbf{L}
\left( \uus \right) \right)
\end{split}
\end{equation}
where the coefficients $a_{ij}, b_i$ need to be determined to achieve fourth order accuracy. Further note that, we use the approximate Lax-Wendroff procedure (Section~\ref{sec:mdrk.alw})
to approximate $\partial_t  \bL \left( \uu^n \right), \partial_t  \bL \left(
\uus \right)$ to $O (\mathLaplace t^3)$ accuracy and thus we perform an error
analysis of an evolution performed as
\[ \uus = \uu^n + \mathLaplace ta_{2 \nocomma 1}  \bL \left( \uu^n \right) +
\mathLaplace t^2  \hat{a}_{21}  \bL_t \left( \uu^n \right) + O
(\mathLaplace t^5) \]
\begin{equation}
\uu^{n + 1} = \uu^n + \mathLaplace t \left( b_1  \bL \left( \uu^n \right) +
b_2  \bL \left( \uus \right) \right) + \mathLaplace t^2  \left( \hat{b}_1
\partial_t  \bL + \hat{b}_2 \partial_t  \bL \left( \uus \right) \right) + O
(\mathLaplace t^5) \label{eq:mdrk.unp.approximate}
\end{equation}
Now, note that
\begin{equation}
\begin{split}
\uu_{t \nocomma t} = \bL_{\uu}  \uu_t = \bL_{\uu}  \bL, \qquad \uu_{t
\nocomma t \nocomma t} = \bL_{\uu \nocomma \uu}  \uu_t^2 + \bL_{\uu}
\uu_{t \nocomma t} = \bL_{\uu \nocomma \uu}  \bL^2 + \bL_{\uu}^2  \bL \\
\uu_{t \nocomma t \nocomma t \nocomma t} = \bL_{\uu \nocomma \uu \nocomma
\uu}  \uu_t^3 + 3 \bL_{\uu \nocomma \uu}  \uu_t  \uu_{t \nocomma t} +
\bL_{\uu}  \uu_{t \nocomma t \nocomma t} = \bL_{\uu \nocomma \uu \nocomma
\uu}  \bL^3 + 4 \bL_{\uu \nocomma \uu}  \bL_{\uu}  \bL^2 + \bL_{\uu}^3
\bL
\end{split} \label{eq:mdrk.ut.ders}
\end{equation}
where $\bL_{\uu} = \partial \bL / \partial\uu$, etc. Starting from $\uu = \uu^n$, the exact solution satisfies
\begin{equation}
\uu^{n + 1} = \uu + \mathLaplace t \uu_t + \frac{\mathLaplace t^2}{2}
\uu_{t \nocomma t} + \frac{\mathLaplace t^3}{6}  \uu_{t \nocomma t \nocomma
t} + \frac{\mathLaplace t^4}{24}  \uu_{t \nocomma t \nocomma t \nocomma t} +
O (\mathLaplace t^5) \label{eq:mdrk.taylor.u.five}
\end{equation}
We note the following identities
\begin{gather*}
\correction{\bL_t}\left( \uu \right) = \bL_{\uu}  \uu_t,\\
\uus = \uu + \mathLaplace ta_{21}  \bL + \mathLaplace t^2  \hat{a}_{21}
\bL_{\uu}  \bL + O (\mathLaplace t^5)\\
\bL \left( \uus \right) = \bL + \bL_{\uu} \left( \uus - \uu \right) +
\frac{1}{2}  \bL_{\uu \nocomma \uu}  \left( \uus - \uu \right)^2 +
\frac{1}{6}  \bL_{\uu \nocomma \uu \nocomma \uu}  \left( \uus - \uu
\right)^3 + O (\mathLaplace t^4)\\
\bL_{\uu} \left( \uus \right) = \bL_{\uu} + \bL_{\uu \nocomma \uu}
\left( \uus - \uu \right) + \frac{1}{2}  \bL_{\uu \nocomma \uu \nocomma
\uu}  \left( \uus - \uu \right)^2 + O (\mathLaplace t^3)\\
\correction{\bL_t}\left( \uus \right) = \bL_{\uu} \left( \uus \right)  \bL
\left( \uus \right)
\end{gather*}
Now we will substitute these four equations into~\eqref{eq:mdrk.unp.approximate}
and use~\eqref{eq:mdrk.ut.ders} to obtain the update equation in terms of temporal
derivatives on $\uu$. Then, we compare with the Taylor's expansion of
$\uu$~\eqref{eq:mdrk.taylor.u.five} to get conditions for the respective orders of accuracy.

{\noindent}First order:
\begin{equation}
\label{eq:mdrk.c1} b_1 + b_2 = 1
\end{equation}
{\noindent}Second order:
\begin{equation}
\label{eq:mdrk.c2} b_2 a_{21} + \hat{b}_1 + \hat{b}_2 = \frac{1}{2}
\end{equation}
{\noindent}Third order:
\begin{eqnarray}
b_2 a_{21}^2 + 2 \hat{b}_2 a_{21} & = & \frac{1}{3}  \label{eq:mdrk.c3}\\
b_2  \hat{a}_{21} + \hat{b}_2 a_{21} & = & \frac{1}{6}  \label{eq:mdrk.c4}
\end{eqnarray}
{\noindent}Fourth order:
\begin{eqnarray}
b_2 a_{21}^3 + 3 \hat{b}_2 a_{21}^2 & = & \frac{1}{4}  \label{eq:mdrk.c5}\\
b_2 a_{21}  \hat{a}_{21} + \hat{b}_2 a_{21}^2 + \hat{b}_2  \hat{a}_{21} & =
& \frac{1}{8}  \label{eq:mdrk.c6}\\
\hat{b}_2 a_{21}^2 & = & \frac{1}{12}  \label{eq:mdrk.c7}\\
\hat{b}_2  \hat{a}_{21} & = & \frac{1}{24}  \label{eq:mdrk.c8}
\end{eqnarray}
From (\ref{eq:mdrk.c7}), (\ref{eq:mdrk.c8}) we get
\begin{equation}
\hat{a}_{21} = \frac{1}{2} a_{21}^2
\end{equation}
We then see that equations~(\ref{eq:mdrk.c3}), (\ref{eq:mdrk.c4}) become identical, and
equations~(\ref{eq:mdrk.c5}), (\ref{eq:mdrk.c6}) become identical. Simplifying the above
equations, we get five equations for the five unknown coefficients
\begin{eqnarray}
b_1 + b_2 & = & 1  \label{eq:mdrk.d1}\\
b_2 a_{21} + \hat{b}_1 + \hat{b}_2 & = & \frac{1}{2}  \label{eq:mdrk.d2}\\
b_2 a_{21}^2 + 2 \hat{b}_2 a_{21} & = & \frac{1}{3}  \label{eq:mdrk.d3}\\
b_2 a_{21}^3 + 3 \hat{b}_2 a_{21}^2 & = & \frac{1}{4}  \label{eq:mdrk.d4}\\
\hat{b}_2 a_{21}^2 & = & \frac{1}{12}  \label{eq:mdrk.d5}
\end{eqnarray}
Using \eqref{eq:mdrk.d5} in \eqref{eq:mdrk.d4} we get
\[ b_2 a_{21}^3 = 0 \]
The solution $a_{21} = 0$ does not satisfy \eqref{eq:mdrk.d3}, \eqref{eq:mdrk.d4}, hence let us choose
\[ b_2 = 0. \]
Then we get the unique solution for the coefficients
\[ b_1 = 1, \quad b_2 = 0, \quad \hat{b}_1 = \frac{1}{6}, \quad \hat{b}_2 =
\frac{1}{3}, \quad a_{21} = \frac{1}{2}, \quad \hat{a}_{21} = \frac{1}{8}
\]
These coefficients do give the scheme~\eqref{eq:mdrk.f2.defn} for which the two stage method is fourth order accurate.

\begin{remark} \label{rmk:why.accurate}
Equations~\eqref{eq:mdrk.unp.approximate} justify the choice of finite difference approximations made in Section~\ref{sec:mdrk.alw}. The time derivatives $\partial_t  \bL(\uu^n), \partial_t \bL(\uus)$ need to be approximated to atleast third order of accuracy for the one-step error to be $O(\Delta t^5)$ and the overall method to be fourth order accurate, which is why the third order accurate finite difference formulae in time are used in Section~\ref{sec:mdrk.alw}.
\end{remark}

\section{Treatment of source terms}\label{sec:source.terms}

In this section, we extend the multi-derivative Runge-Kutta (MDRK) Flux Reconstruction scheme to conservation laws with source terms
\begin{equation}
\uu_t + \pf \left( \uu \right)_x = \bss \label{eq:con.law.with.source}
\end{equation}
where $\bss = \bss(\uu, t, \bx)$. The idea is to apply the MDRK scheme of~\cite{li2016} to~\eqref{eq:con.law.with.source} as follows
\begin{equation*}
\begin{split}
\uus & = \uu^n - \frac{\mathLaplace t}{2} \partial_x \Fone + \frac{\Delta t}{2} \Sone
\\
\uu^{n + 1} & = \uu^n - \mathLaplace t \partial_x  \Ftwo + \Delta t \Stwo
\end{split}
\end{equation*}
where $\Fone, \Ftwo$ are the time averaged fluxes~(\ref{eq:Fone},~\ref{eq:Ftwo}) and $\Sone, \Stwo$ are the time averaged source terms satisfying
\begin{equation*}
\begin{split}
\Sone & \assign \bss \left( \bu^n \right) + \frac{1}{4}
\mathLaplace t \pdv{}{t}  \bss \left( \bu^n \right) \approx \frac{1}{\Delta t / 2} \int_{t^n}^{t^{\nph}} \bss \\
\Stwo & \assign \bss (\tmmathbf{u}^n) + \frac{1}{6}
\mathLaplace t \left( \pdv{}{t}  \bss (\tmmathbf{u}^n) + 2 \pdv{}{t}  \bss
(\tmmathbf{u}^{\ast}) \right) \approx \frac{1}{\Delta t}\int_{t^n}^{t^{n+1}} \bss
\end{split}
\end{equation*}
The approximate Lax-Wendroff procedure is used to approximate time average fluxes and source terms. It is an extension of the procedure in Section~\ref{sec:mdrk.alw} to solve~\eqref{eq:con.law.with.source} which we now describe.
\paragraph{First stage.}
The time averaged flux $\Fone$ and time averaged source term $\Sone$ are approximated as
\[
\vFone = \vf + \frac{1}{4}  \vf^{(1)}, \qquad \vSone = \vs + \frac{1}{4} \vs^{(1)}
\]
where
\begin{align*}
\vu^{(1)} & = - \frac{\Delta t}{\Delta x_e}  \vD \vf + \Delta t \bss \\
\vf^{(1)} & = \frac{1}{12}  \left[ - \pf ( \vu + 2 \vu^{(1)} )
+ 8 \pf ( \vu + \vu^{(1)} ) - 8 \pf ( \vu - \vu^{(1)}
) + \pf ( \vu - 2 \vu^{(1)} ) \right] \\
\vs^{(1)} & = \frac{1}{12}  \left[ - \bss ( \vu + 2 \vu^{(1)} )
+ 8 \bss ( \vu + \vu^{(1)} ) - 8 \bss ( \vu - \vu^{(1)}
) + \bss ( \vu - 2 \vu^{(1)} ) \right]
\end{align*}

\paragraph{Second stage.}
The time averaged flux $\Ftwo$ and the time averaged source term $\Stwo$ are approximated as
\[
\vFtwo = \vf + \frac{1}{6} (\vf^{(1)} + 2 \vfsone), \qquad \vStwo = \vs + \frac{1}{6} (\vs^{(1)} + 2 \vs^{*(1)})
\]
where
\begin{align*}
\vu^{*(1)} & = - \frac{\Delta t}{\Delta x_e}  \vD  \vfsone + \vs^{*(1)} \\
\vf^{*(1)} & = \frac{1}{12}  \left[ - \pf ( \vus + 2 \vusone
) + 8 \pf ( \vus + \vusone ) - 8 \pf ( \vus -
\vusone ) + \pf ( \vus - 2 \vusone ) \right] \\
\vs^{*(1)} & = \frac{1}{12}  \left[ - \bss ( \vus + 2 \vusone
) + 8 \bss ( \vus + \vusone ) - 8 \bss ( \vus -
\vusone ) + \bss ( \vus - 2 \vusone ) \right]
\end{align*}

\section*{Acknowledgments}
The work of Arpit Babbar and Praveen Chandrashekar is supported by the Department of Atomic Energy,  Government of India, under project no.~12-R\&D-   TFR-5.01-0520. \correction{We thank Jalil Khan for benchmarking our method on the AMD processor.}

\section*{Conflict of interest}
On behalf of all authors, the corresponding author states that there is no conflict of interest.

\bibliographystyle{siam}
\bibliography{references}

\end{document}